\documentclass{compositio_mod}

\usepackage{amsgen,amsmath,amsxtra,amsfonts}
\numberwithin{equation}{section}

\usepackage{tikzexternal}
\tikzexternalize

\linespread{1.05}
\usepackage[parfill]{parskip}
\begingroup
  \makeatletter
  \@for\theoremstyle:=definition,remark,plain\do{%
    \expandafter\g@addto@macro\csname th@\theoremstyle\endcsname{%
      \addtolength\thm@preskip\parskip
    }%
  }
\endgroup
\usepackage{nowidow}
\usepackage{microtype}
\usepackage{combelow}
\usepackage[mathscr]{euscript}

\usepackage{xcolor}
\usepackage[pdftex,colorlinks=true,allcolors=blue!70!black,breaklinks]{hyperref}
\usepackage{url}

\usepackage{enumitem}
\usepackage{booktabs}

\theoremstyle{plain}
\newtheorem{definition}{Definition}[section]
\newtheorem{proposition}[definition]{Proposition}
\newtheorem{lemma}[definition]{Lemma}
\newtheorem{theorem}[definition]{Theorem}
\newtheorem{corollary}[definition]{Corollary}
\theoremstyle{definition}
\newtheorem{remark}[definition]{Remark}

\DeclareMathOperator{\sEnd}{\mathscr{E}\mkern-1.1mu\mathit{nd}}
\DeclareMathOperator{\Map}{\mathbf{Map}}
\newcommand{\ZZ}{\mathbb{Z}}
\newcommand{\CC}{\mathbb{C}}
\newcommand{\HH}{\mathbb{H}}
\newcommand{\PP}{\mathbb{P}}
\newcommand{\GG}{\mathbb{G}}
\renewcommand{\O}{\mathcal{O}}
\newcommand{\E}{\mathcal{E}}
\newcommand{\F}{\mathcal{F}}

\begin{document}

\title{A twisted nonabelian Hodge correspondence}
\author{Alberto Garc{\'\i}a-Raboso}
\email{agraboso@math.toronto.edu}
\address{\scshape
  Department of Mathematics\\
  University of Toronto\\
  40 St. George St., room 6290\\
  Toronto, ON M5S 2E4, Canada}
\classification{58A14 (Primary), 58A12, 32J25, 14C30, 14D23 (Secondary)}
\keywords{nonabelian Hodge theory, gerbes, principal $\infty$-bundles.}

\begin{abstract}
\vspace{2pt}
We prove an extension of the nonabelian Hodge theorem \cite{MR1179076} in which the underlying objects are \emph{twisted} torsors over a smooth complex projective variety. In the prototypical case of $GL_n$-torsors, one side of this correspondence consists of vector bundles equipped with an action of a sheaf of twisted differential operators in the sense of Be{\u\i}linson and Bernstein \cite{MR1237825}; on the other side, we endow them with appropriately defined twisted Higgs data.\\[.5\baselineskip]
The proof we present here is formal, in the sense that we do not delve into the analysis involved in the classical nonabelian Hodge correspondence. Instead, we use homotopy-theoretic methods ---chief among them the theory of principal $\infty$-bundles \cite{arXiv1207.0248}--- to reduce our statement to classical (untwisted) Hodge theory \cite{MR1978713}.
\end{abstract}

\maketitle
\microtypesetup{protrusion=false}
\tableofcontents
\microtypesetup{protrusion=true}
\clearpage


The study of twisted sheaves has experienced a resurgence in the last decade. Introduced by Giraud \cite{MR0344253} in his work on nonabelian cohomology in the early 1970s, they were promptly forgotten. It was not until the turn of the century that C{\u a}ld{\u a}raru \cite{MR2700538} undertook a systematic study of their derived categories and developed a theory of Fourier-Mukai transforms between them, bringing some attention back to them. A few of the more interesting recent developments concerning twisted sheaves include the following.
\begin{itemize}[itemsep=-1pt]
\item A conjecture of C{\u a}ld{\u a}raru's relating twisted Fourier-Mukai transforms to Hodge isometries of K3 lattices was proved a few years later by Huybrechts and Stellari \cite{MR2179782,MR2310257}.
\item Kontsevich \cite{MR1403918} had conjectured that they effect first-order infinitesimal deformations of categories of coherent sheaves ---an idea that was put on firm ground by Lowen and van den Bergh \cite{MR2183254,MR2238922} (see also \cite{MR2477894}). Ben-Bassat, Block and Pantev \cite{MR2309993} extended this to formal deformations in the case of complex tori, while Sawon \cite{arXiv1209.3202} recently constructed actual families of generalized K3 surfaces realizing them.
\item Donagi and Pantev \cite{MR2399730} proved a duality theorem for genus one fibrations that connects twisted sheaves to Tate-Shafarevich groups ---a kind of Pontrjagin duality for commutative group stacks, as Arinkin reflects in an appendix to \emph{loc.cit}.
\item On a different direction, Lieblich \cite{MR2309155} and Yoshioka \cite{MR2306170} constructed moduli spaces of twisted sheaves satisfying appropriate stability conditions.
\item A (unpublished\footnote{See \url{http://mathoverflow.net/questions/158614}.}) theorem of Gabber's upholding an old conjecture of Grothendieck's \cite{MR0244269} about the relationship between the Azumaya and cohomological Brauer groups of quasi-projective varieties was reproved by de Jong \cite{deJong03} using twisted sheaves.
\end{itemize}

It is only natural to explore what Hodge theory might be able to say about twisted sheaves. After all, one of the areas in which nonabelian cohomology has featured quite prominently is in the study of nonabelian Hodge theory (see, e.g., \cite{MR1397992,MR1978713}).

Abstract considerations notwithstanding, the original motivation for this work came from the tamely ramified version of the Geometric Langlands Correspondence (GLC). The program set forth in \cite{MR2537083} advocates viewing the GLC as a quantization of a certain Fourier--Mukai transform ---the quantization being mediated by appropriate nonabelian Hodge correspondences. In the compact case, it is the classical nonabelian Hodge theorem of Simpson's \cite{MR1179076} that applies. In the tamely ramified case, on the other hand, the objects that should appear are some sort of twisted bundles with parabolic structure along a divisor.

Although some approaches to defining twisted vector bundles with twisted flat connections have been made before \cite{MR1405064,ChatterjeeThesis,MR2362847,MR2045884}, the Higgs side has remained virtually unexplored until now ---as has the case of principal bundles (for which we prefer the term \emph{torsor}). In this paper we propose a definition of twisted torsors with twisted connections and twisted Higgs fields that results in a twisted nonabelian Hodge correspondence.

Our results represent a first step towards the correspondence needed to attack the GLC in the case of tame ramification: here we only deal with twisted torsors over smooth complex projective varieties, leaving the parabolic case to future work.
\noclub

Even though our main theorems ---Theorem \ref{theorem:MainTheoremGLn} for vector bundles, and Theorem \ref{theorem:MainTheorem} in the general case--- are relatively easy to understand, their full statement and our proofs use the language and the machinery of $\infty$-topoi \cite{MR2137288,MR2394633,MR2522659}, and in particular the beautiful theory of principal $\infty$-bundles \cite{arXiv1207.0248,arXiv1207.0249}. In the hopes that the reader unfamiliar with this rather abstract framework will still want to get a feeling for them, we have devoted \S\ref{section:Introduction} to presenting ``{\v C}ech''-like versions of them (Theorems \ref{theorem:MainTheoremGLnCech} and \ref{theorem:MainTheoremCech}) as candidly as possible, emphasizing the train of thought rather than a strictly logical order of exposition. We defer a discussion of the remaining contents of this paper until the end of that section.

\begin{acknowledgements}
The results of this paper constitute my doctoral dissertation at the University of Pennsylvania. I want to thank my advisor, Tony Pantev, for suggesting the problem, for his guidance in my investigation of it, for his constant support and encouragement, and for coming up with counterexamples for some of my most outlandish and stupid claims ---just one of the faces of his immense mathematical prowess; the University of Pennsylvania, for its generous financial support in the form of a Benjamin Franklin fellowship; Carlos T. Simpson, for his continued interest in my work; Urs Schreiber, for patiently listening to me, and for creating and maintaining a resource as useful as the nLab; Marc Hoyois, for ever mentioning the words \emph{1-localic $\infty$-topoi}, and for answering some of my questions; Angelo Vistoli, for his help with Lemma \ref{lemma:EtaleQuotientAlgebraicGroups}; Nikita Nikolaev for a thorough proofreading and many useful comments on language; Tyler L. Kelly, Drago{\cb s} Deliu, Umut Isik, Pranav Pandit and Ana Pe{\' on}-Nieto, for many helpful conversations; and my wife, Roc{\' i}o Naveiras-Cabello, for more reasons than I could list here.

I acknowledge support from NSF grants DMS 1107452, 1107263, 1107367 ``RNMS: GEometric structures And Representation varieties'' (the GEAR Network), the NSF RTG grant DMS 0636606 and the NSF grant DMS 1001693.
\end{acknowledgements}

\section{Introduction}
\label{section:Introduction}

\subsection{Twisted vector bundles}
\label{section:TwistedVectorBundles}

\subsubsection{}

Let $X$ be a smooth projective variety over $\CC$, considered either as a scheme with the {\' e}tale topology or as a complex analytic space endowed with the classical topology. Given $\alpha \in H^2(X, \O^\times_X)$, we can always\footnote{\label{footnote:GoodCovers}In the analytic case, take a good open cover; for the {\' e}tale topology, see \cite[Theorem III.2.17]{MR559531}.} choose an open cover $\mathfrak{U} = \{ U_i \to X \}_{i \in I}$ of $X$ such that there exists a {\v C}ech 2-cocycle $\underline{\alpha} = \{ \alpha_{ijk} \} \in \check{Z}^2(\mathfrak{U}, \O^\times_X)$ representing the class $\alpha$. The following definition goes back to Giraud's work on nonabelian cohomology \cite{MR0344253}.

\begin{definition}
\label{definition:TwistedSheaf}
An \emph{$\alpha$-twisted sheaf} on $X$ is a collection
\[
\Big( \underline{\E} = \{ \E_i \}_{i \in I}, \; \underline{g} = \{ g_{ij} \}_{i, j \in I} \Big)
\]
of sheaves $\E_i$ of $\O_X$-modules on $U_i$, together with isomorphisms $g_{ij}: \E_j |_{U_{ij}} \to \E_i |_{U_{ij}}$ satisfying $g_{ii} = \operatorname{id}_{\E_i}$, $g_{ij} = g_{ji}^{-1}$, and the $\underline{\alpha}$-twisted cocycle condition,
\[
g_{ij} g_{jk} g_{ki} = \alpha_{ijk} \operatorname{id}_{\E_i},
\]
on $U_{ijk}$ for any $i, j, k \in I$. Given two $\alpha$-twisted sheaves, $\big( \underline{\E}, \underline{g} \big)$ and $\big( \underline{\F}, \underline{h} \big)$, a morphism between them is given by a collection $\underline{\varphi} = \{ \varphi_i\}_{i \in I}$ of morphisms $\varphi_i : \E_i \to \F_i$ intertwining the transition functions; i.e., such that $\varphi_i g_{ij} = h_{ij} \varphi_j$.
\end{definition}

Although this definition uses a particular representative for the class $\alpha$, it can be shown \cite{MR2700538} that the category of such objects is independent ---up to equivalence--- of the choice of cover $\mathfrak{U}$ and representing cocycle $\underline{\alpha}$, justifying the choice of labeling them with the class $\alpha$ in sheaf cohomology instead of the cocycle $\underline{\alpha}$ itself.

In what follows we will be concerned not with the whole category of $\alpha$-twisted sheaves but with the subgroupoid $_{\alpha}\mathfrak{Vec}_n(X)$ of $\alpha$-twisted vector bundles of fixed rank $n$ and isomorphisms between them; to wit, those $\alpha$-twisted sheaves $\big( \underline{\E}, \underline{g} \big)$ for which each $\E_i$ is a vector bundle of rank $n$, with morphisms the invertible ones.

\subsubsection{}

Definition \ref{definition:TwistedSheaf} makes it clear that the twisting $\underline{\alpha}$ should be considered as the substrate on which $\alpha$-twisted vector bundles live.

\begin{definition}
\label{definition:GmGerbeCech}
A $\mathfrak{U}$-$\GG_m$-gerbe over $X$ is the datum of a 2-cocycle $\underline{\alpha} \in \check{Z}^2(\mathfrak{U}, \GG_m)$.
\end{definition}

$\mathfrak{U}$-$\GG_m$-gerbes form a strict 2-category ---a 2-groupoid, in fact---, with the 1-morphisms given by {\v C}ech 1-cochains and the 2-morphisms by {\v C}ech 0-cochains. This category is not independent of the choice of cover $\mathfrak{U}$; nevertheless, taking a refinement $\mathfrak{V}$ of $\mathfrak{U}$ produces a functor from the category of $\mathfrak{U}$-$\GG_m$-gerbes to that of $\mathfrak{V}$-$\GG_m$-gerbes\footnote{This functor is injective at the level of equivalence classes of objects if we restrict to an appropriate class of covers (see footnote \ref{footnote:GoodCovers}).}.

We will later give a more abstract definition ---that of a \emph{$\GG_m$-gerbe} (Definitions \ref{definition:bandedGerbes} and \ref{definition:AGerbesAsPrincipalInftyBundles})--- that does not depend on the choice of a cover, and which provides a true geometric realization for classes in sheaf cohomology.

\begin{definition}
\label{definition:BasicVectorBundleOverGmGerbeCech}
A \emph{basic\footnote{Classically these are known as \emph{weight 1 vector bundles} on $\underline{\alpha}$. In the more general setting in which we wish to prove our result ---twisted torsors for groups other than $GL_n$---, a simple weight does not suffice; hence the need for a different terminology. See \S\ref{section:TorsorsOverGerbes}.} vector bundle on a $\mathfrak{U}$-$\GG_m$-gerbe $\underline{\alpha}$ over $X$} is an $\alpha$-twisted vector bundle $(\underline{\E}, \underline{g})$ on $X$.
\end{definition}

At this point, this definition seems unnecessary ---and we will, in fact, keep using the term $\alpha$-twisted vector bundle. Its convenience will become manifest in just a few short pages when we introduce twisted connections and twisted Higgs fields.

\subsubsection{}
\label{section:RemarksAboutTorsionAndProjectivization}

We make two important remarks about this category of $\alpha$-twisted vector bundles.

\begin{itemize}
\item For a fixed class $\alpha \in H^2(X, \O^\times_X)$, there may be no $\alpha$-twisted vector bundles of rank $n$. In general this is a difficult question concerning the relationship between the Azumaya Brauer group of $X$ and its cohomological Brauer group \cite{MR0244269}. A short survey of these issues can be found in \cite[\S2.1.3]{MR2399730}. We simply observe that $_{\alpha}\mathfrak{Vec}_n(X)$ is empty unless $\alpha$ is $n$-torsion. Indeed, if there is an object $\big( \underline{\E}, \underline{g} \big) \in\!\!\ _{\alpha}\mathfrak{Vec}_n(X)$, then $\det\underline{g} = \{ \det g_{ij} \}_{i, j \in I}$ provides an element of $\check{C}^1(\mathfrak{U}, \O^\times_X)$ whose \v Cech differential equals $\underline{\alpha}^n$.
\item From an $\alpha$-twisted vector bundle of rank $n$ we can produce an honest, untwisted $\PP^{n-1}$-bundle by projectivizing all the locally defined vector bundles: the twisting $\underline{\alpha}$ goes away because it is contained in the kernel of the map $GL_n \to \PP GL_n$ ---which coincides with the center of $GL_n$. This construction is clearly functorial.
\end{itemize}
These facts will appear once again when we talk about twisted connections and twisted Higgs fields below. Their recurrence will be explicated thoroughly in \S \ref{section:TorsionPhenomena}.

\subsection{Twisted connections and twisted Higgs fields}
\label{section:TwistedConnectionsAndTwistedHiggsFields}

\subsubsection{The (classical) nonabelian Hodge theorem.}

Let $\E$ be a vector bundle on $X$. Recall that a \emph{connection} on $\E$ is a $\CC$-linear map $\nabla : \E \to \Omega^1_X \otimes \E$ satisfying the Leibniz rule
\[
\nabla (f v) = df \otimes v + f \nabla v,  \qquad\text{for } f \in \O_X, v \in \E.
\]
A connection $\nabla$ naturally extends to a collection of $\CC$-linear maps $\nabla^{(k)} : \Omega^k_X \otimes \E \to \Omega^{k+1}_X \otimes \E$ defined by the graded Leibniz identity
\[
\nabla^{(k)}(\alpha \otimes v) = d\alpha \otimes v + (-1)^k \alpha \wedge \nabla v, \qquad\text{for } v \in \E, \alpha \in \Omega^k_X.
\]
The compositions $\nabla^{(k+1)} \circ \nabla^{(k)}$ are then $\O_X$-linear, and we define the \emph{curvature $C(\nabla)$} of $\nabla$ to be the image of $\nabla^{(1)} \circ \nabla$ under the standard duality isomorphism
\begin{equation} \label{eq:HomDuality}
\operatorname{Hom}_{\O_X}(\E, \Omega^2_X \otimes \E) \cong \operatorname{Hom}_{\O_X}\!\big( \O_X, \Omega^2_X \otimes \sEnd(\E) \big) \cong \Gamma \big( X, \Omega^2_X \otimes \sEnd(\E) \big).
\end{equation}
A connection is said to be \emph{flat} if its curvature vanishes; it is then customary to call the pair $\big( \E, \nabla \big)$ a \emph{flat vector bundle}. The difference of any two connections $\nabla_1$ and $\nabla_2$ on the same vector bundle $\E$ is an $\O_X$-linear map and can also be considered as a 1-form with values in the endomorphism bundle of $\E$ through the isomorphisms
\[
\operatorname{Hom}_{\O_X}(\E, \Omega^1_X \otimes \E) \cong \operatorname{Hom}_{\O_X}\!\big( \O_X, \Omega^1_X \otimes \sEnd(\E) \big) \cong \Gamma \big( X, \Omega^1_X \otimes \sEnd(\E) \big).
\]

On the other hand, a \emph{Higgs bundle} is a pair $\big( \F, \phi \big)$ of a vector bundle $\F$ together with an $\O_X$-linear map $\phi : \F \to \Omega^1_X \otimes \F$ ---usually referred to as a \emph{Higgs field}--- satisfying
\[
0 = \phi \wedge \phi \in \Gamma \big( X, \Omega^2_X \otimes \sEnd(\F) \big),
\]
where $\phi \wedge \phi$ is called the \emph{curvature $C(\phi)$} of $\phi$ in analogy with the case of a connection. Two Higgs fields $\phi_1$ and $\phi_2$ on the same vector bundle $\F$ also differ by a 1-form with values in $\sEnd(\F)$.

The nonabelian Hodge theorem \cite{MR1179076} (see also \cite{arXiv1406.1693} for a review) establishes an equivalence between the category of flat vector bundles on $X$ and a certain full subcategory of the category of Higgs bundles on the same variety. The latter is specified by two conditions on objects:
\begin{itemize}
\item The first one is purely topological: the components of the first and second Chern characters of $\mathcal{F}$ along the hyperplane class $[H] \in H^2(X, \CC)$ of $X$ (which we refer to as the Chern \emph{numbers} of $\mathcal{F}$) should vanish:
\[
\mathrm{ch}_1(\mathcal{F}) \cdot [H]^{\mathrm{dim}\,X-1} = 0 = \mathrm{ch}_2(\mathcal{F}) \cdot [H]^{\mathrm{dim}\,X-2}
\]
Equivalently, the first and second Chern \emph{classes} of $\mathcal{F}$ vanish along $[H]$.
\item The second condition depends on the holomorphic structure of the underlying vector bundle as well as on the Higgs field: a Higgs bundle $\big( \F, \phi \big)$ is said to be \emph{semistable} if for every subbundle $\F' \subset \F$ preserved by the Higgs field ---i.e, such that $\phi(\F') \subseteq \Omega^1_X \otimes \F'$--- we have $\mu(\F') \leq \mu(\F)$. Here the \emph{slope} $\mu$ of a vector bundle is defined as the quotient of its degree by its rank.
\end{itemize}
The first of these conditions implies the vanishing of the slope of any Higgs bundle in this subcategory, since $\mathrm{ch}_1(\mathcal{F}) \cdot [H]^{\mathrm{dim}\,X-1}$ is its degree; the second condition then reduces to saying that any $\phi$-invariant subbundle of $\mathcal{F}$ has non-positive degree.

\subsubsection{Twisted connections.}

The first step towards formulating a twisted version of the nonabelian Hodge theorem is to determine what a ``flat connection'' on an an $\alpha$-twisted vector bundle $\big( \underline{\E}, \underline{g} \big)$ should be. The na{\" i}ve definition consists of equipping each $\E_i$ with a flat connection $\nabla_i : \E_i \to \Omega^1_{U_i} \otimes \E_i$ in such a way that the following diagram is commutative for every $i, j \in I$:
\[
\begin{tikzpicture}[x=100pt, y=40pt]
  \node (11) at (0,1) {$\E_j \big|_{U_{ij}}$};
  \node (12) at (1,1) {$\Omega^1_{U_j} \otimes \E_j \big|_{U_{ij}}$};
  \node (21) at (0,0) {$\E_i \big|_{U_{ij}}$};
  \node (22) at (1,0) {$\Omega^1_{U_i} \otimes \E_i \big|_{U_{ij}}$};
  \path[semithick,->]
    (11) edge node[scale=0.8,yshift=10pt] {$\nabla_j$} (12)
    ([xshift=-8pt]11.south) edge node[pos=0.45,scale=0.8,xshift=10pt] {$g_{ij}$} ([xshift=-8pt]21.north)
    ([xshift=6pt]12.south) edge node[pos=0.45,scale=0.8,xshift=10pt] {$g_{ij}$} ([xshift=6pt]22.north)
    (21) edge node[scale=0.8,yshift=10pt] {$\nabla_i$} (22);
\end{tikzpicture}
\]
More compactly, $\nabla_i - g_{ij} \nabla_j \, g_{ij}^{-1} = 0$.

There is, however, a natural weakening of these requirements that yields a more general and interesting class of objects: namely, to allow for the locally defined connections to
\begin{itemize}[topsep=-3pt,partopsep=0pt,parsep=0pt,itemsep=1pt]
\item differ on double intersections by 1-forms with values in the center of the appropriate endomorphism bundle, and
\item have nonzero central curvature.
\end{itemize}
This amounts to choosing cochains
\[
\underline{\omega} = \{ \omega_{ij} \} \in \check{C}^1(\mathfrak{U}, \Omega^1_X) \quad\text{and}\quad \underline{F} = \{ F_i \} \in \check{C}^0(\mathfrak{U}, \Omega^2_X)
\]
diagonally embedded in $\check{C}^1 \big( \mathfrak{U}, \Omega^1_X \otimes \sEnd(\E) \big)$ and $\check{C}^0 \big( \mathfrak{U}, \Omega^2_X \otimes \sEnd(\E) \big)$, respectively, and demanding the $\nabla_i$ to satisfy the equations
\[
\nabla_i - g_{ij} \nabla_j \, g_{ij}^{-1} = \omega_{ij} \quad\text{and}\quad C(\nabla_i) = F_i.
\]
Of course we cannot pick $\underline{\omega}$ and $\underline{F}$ arbitrarily. Rather, there is a set of compatibility conditions coming from the fact that the $\nabla_i$ are connections:
\begin{equation}  \label{eq:FlatUGmGerbe}
\begin{matrix}
\omega_{ik} = \omega_{ij} + \omega_{jk} - d\log\alpha_{ijk} & \text{on } U_{ijk} \\[1ex]
F_i - F_j =  d\omega_{ij} & \text{on } U_{ij\;\,} \\[1ex]
dF_i = 0 & \text{on } U_{i\;\;\,\,}
\end{matrix}
\end{equation}
These are precisely the relations needed to make the triple $\big( \underline{\alpha}, \underline{\omega}, \underline{F} \big)$ into a {\v C}ech 2-cocycle in hypercohomology of the multiplicative de Rham complex of $X$:
\begin{equation}
\label{eq:deRhamComplexG_m}
\mathrm{dR}^{\GG_m}_X := \Big[ \O^\times_X \xrightarrow{\;\;d\log\;\;} \Omega^1_X \xrightarrow{\quad d \quad} \Omega^2_X \xrightarrow{\quad d \quad} \cdots \Big].
\end{equation}
To fix notation, recall that, given a complex $(E^\bullet, d)$ of sheaves on $X$, its hypercohomology with respect to a cover $\mathfrak{U}$ is defined as the cohomology of the total complex of the double complex
\[
\begin{tikzpicture}[x=250pt, y=80pt]
  \node (11) at (0,1) {$\vdots$};
  \node (12) at (0.333,1) {$\vdots$};
  \node (13) at (0.666,1) {$\vdots$};
  \node (21) at (0,0.5) {$\check{C}^1(\mathfrak{U}, E^0)$};
  \node (22) at (0.333,0.5) {$\check{C}^1(\mathfrak{U}, E^1)$};
  \node (23) at (0.666,0.5) {$\check{C}^1(\mathfrak{U}, E^2)$};
  \node (24) at (0.94,0.5) {$\cdots$};
  \node (31) at (0,0) {$\check{C}^0(\mathfrak{U}, E^0)$};
  \node (32) at (0.333,0) {$\check{C}^0(\mathfrak{U}, E^1)$};  
  \node (33) at (0.666,0) {$ \check{C}^0(\mathfrak{U}, E^2)$};
  \node (34) at (0.94,0) {$\cdots$};
  \path[semithick,->]
    (21) edge node[pos=0.45,scale=0.8,xshift=8pt] {$\delta$} (11)
    (22) edge node[pos=0.45,scale=0.8,xshift=12pt] {$-\delta$} (12)
    (23) edge node[pos=0.45,scale=0.8,xshift=8pt] {$\delta$} (13)
    (21) edge node[pos=0.5,scale=0.8,yshift=8pt] {$d$} (22)
    (22) edge node[pos=0.5,scale=0.8,yshift=8pt] {$d$} (23)
    (23) edge node[pos=0.5,scale=0.8,yshift=8pt] {$d$} (24)
    (31) edge node[pos=0.45,scale=0.8,xshift=8pt] {$\delta$} (21)
    (32) edge node[pos=0.45,scale=0.8,xshift=12pt] {$-\delta$} (22)
    (33) edge node[pos=0.45,scale=0.8,xshift=8pt] {$\delta$} (23)
    (31) edge node[pos=0.5,scale=0.8,yshift=8pt] {$d$} (32)
    (32) edge node[pos=0.5,scale=0.8,yshift=8pt] {$d$} (33)
    (33) edge node[pos=0.5,scale=0.8,yshift=8pt] {$d$} (34);
\end{tikzpicture}
\]

\begin{definition}
\label{definition:FlatGmGerbeCech}
A \emph{flat $\mathfrak{U}$-$\GG_m$-gerbe over $X$} is a 2-cocycle $\big( \underline{\alpha}, \underline{\omega}, \underline{F} \big) \in \check{\ZZ}^2 \big( \mathfrak{U}, \mathrm{dR}^{\GG_m}_X \big)$.
\end{definition}

The category of flat $\mathfrak{U}$-$\GG_m$-gerbes is again a 2-groupoid, with 1- and 2-morphisms given by {\v C}ech 1- and 0-cochains, respectively. The corresponding cover-independent concept is that of a \emph{flat $\GG_m$-gerbe} (Definition \ref{definition:FlatGerbe}).

The notions of flat $\mathfrak{U}$-$\GG_m$-gerbes and flat $\GG_m$-gerbes have appeared in similar form in the literature before under the names of Dixmier-Douady sheaves of groupoids with connective structure and curving \cite{MR2362847}, bundle gerbes with connection and curvature \cite{MR1405064}, and gerbs with 0- and 1-connection \cite{ChatterjeeThesis}. Our approach in this introduction is similar to that of the last reference, while the point of view in the remainder is closer in spirit to the first two.

\begin{definition}
\label{definition:BasicVectorBundleOverFlatGmGerbeCech}
A \emph{basic vector bundle on a flat $\mathfrak{U}$-$\GG_m$-gerbe $\big( \underline{\alpha}, \underline{\omega}, \underline{F} \big)$ over $X$} is a collection $\big( \underline{\E}, \underline{\nabla}, \underline{g} \big)$ consisting of an $\alpha$-twisted vector bundle $\big( \underline{\E}, \underline{g} \big)$ on $X$ together with connections $\nabla_i$ on $\E_i$ satisfying the equations
\[
\nabla_i - g_{ij} \nabla_j \, g_{ij}^{-1} = \omega_{ij} \quad\text{and}\quad C(\nabla_i) = F_i.
\]
\end{definition}

Having fixed a flat $\mathfrak{U}$-$\GG_m$-gerbe $\big( \underline{\alpha}, \underline{\omega}, \underline{F} \big)$, we can consider the groupoid of basic vector bundles on it. Its morphisms are given by those isomorphisms of the underlying $\alpha$-twisted vector bundles that commute with the connections. A straightforward computation shows that an element of $\check{\CC}^1 \big( \mathfrak{U}, \mathrm{dR}^{\GG_m}_X \big)$ giving an equivalence between two flat $\mathfrak{U}$-$\GG_m$-gerbes also provides an equivalence of their respective categories of basic vector bundles. Arguing as in \cite{MR2700538}, it is possible to prove that refining the cover $\mathfrak{U}$ does not affect this category up to equivalence.

If the cocycle $\underline{\alpha}$ is composed of locally constant functions ---so that $d\log\underline{\alpha} = 0$---, the equations \eqref{eq:FlatUGmGerbe} satisfied by the 1- and 2-form parts of a flat $\mathfrak{U}$-$\GG_m$-gerbe do not involve $\underline{\alpha}$. In this case, we can separate the data of a flat $\mathfrak{U}$-$\GG_m$-gerbe into the bare $\mathfrak{U}$-$\GG_m$-gerbe $\underline{\alpha}$ and the pair $\big( \underline{\omega}, \underline{F} \big)$. The latter, which defines a class in $\check{\HH}^1(\mathfrak{U}, \Omega^1_X \to \Omega^{2, \mathrm{cl}}_X)$, gives rise to a sheaf of twisted differential operators (TDOs) on $X$ in the sense of Be{\u\i}linson and Bernstein \cite{MR1237825}, and basic vector bundles on $\big( \underline{\alpha}, \underline{\omega}, \underline{F} \big)$ can then be described as $\alpha$-twisted vector bundles equipped with an action of this sheaf of TDOs.

The fact that $\alpha$ is $n$-torsion (remember \S\ref{section:RemarksAboutTorsionAndProjectivization}) implies that we can always choose a representing cocycle $\underline{\alpha}$ that is indeed locally constant. Not only that, but, given a flat $\mathfrak{U}$-$\GG_m$-gerbe, we can always find an equivalent one for which the part in $\check{Z}^2(\mathfrak{U}, \O^\times_X)$ is locally constant. Hence, we can realize basic vector bundles on a flat $\mathfrak{U}$-$\GG_m$-gerbe as twisted vector bundles with an action of a sheaf of TDOs. In fact, more is true: the class of $\big( \underline{\omega}, \underline{F} \big)$ in hypercohomology (equivalently, the sheaf of TDOs it yields) must also be $n$-torsion.

There is yet one more thing that carries over from the case of bare twisted vector bundles of \S\ref{section:TwistedVectorBundles}: projectivizing kills all central data, and so a basic vector bundle of rank $n$ on a flat $\mathfrak{U}$-$\GG_m$-gerbe yields a $\PP^{n-1}$-bundle with flat connection on $X$.

\subsubsection{Twisted Higgs fields.}

After the discussion above, it is clear how we should proceed on the Higgs bundle side. Given an $\alpha'$-twisted vector bundle, $\big( \underline{\E}', \underline{g'} \big)$, pick cochains
\[
\underline{\omega}' = \{ \omega'_{ij} \} \in \check{C}^1(\mathfrak{U}, \Omega^1_X) \quad\text{and}\quad \underline{F}' = \{ F'_i \} \in \check{C}^0(\mathfrak{U}, \Omega^2_X),
\]
equip each $\E'_i$ with a Higgs field, $\phi_i$, and require that they fulfill the equations
\[
\phi_i - g'_{ij} \phi_j \, (g'_{ij})^{-1} = \omega'_{ij} \quad\text{and}\quad C(\phi_i) = F'_i.
\]
The compatibility conditions in this case are
\[
\begin{matrix}
\omega'_{ik} = \omega'_{ij} + \omega'_{jk} & \text{on } U_{ijk} \\[1ex]
F'_i = F'_j & \text{on } U_{ij\;\,}
\end{matrix}
\]
which say that the triple $\big( \underline{\alpha}', \underline{\omega}', \underline{F}' \big)$ assembles into a {\v C}ech 2-cocycle in hypercohomology of the multiplicative Dolbeault complex:
\[
\mathrm{Dol}^{\GG_m}_X := \Big[ \O^\times_X \xrightarrow{\quad 0 \quad} \Omega^1_X \xrightarrow{\quad 0 \quad} \Omega^2_X \xrightarrow{\quad 0 \quad} \cdots \Big].
\]

The following parallel Definitions \ref{definition:FlatGmGerbeCech} and \ref{definition:BasicVectorBundleOverFlatGmGerbeCech}, and the comments below those about the corresponding categories apply verbatim, as well as our recurring remarks about torsion and projectivization (cf. \S\ref{section:RemarksAboutTorsionAndProjectivization})

\begin{definition}
\label{definition:HiggsGmGerbeCech}
A \emph{Higgs $\mathfrak{U}$-$\GG_m$-gerbe} over $X$ is a 2-cocycle $\big( \underline{\alpha}', \underline{\omega}', \underline{F}' \big) \in \check{\ZZ}^2 \big( \mathfrak{U}, \mathrm{Dol}^{\GG_m}_X \big)$.
\end{definition}

\begin{definition}
\label{definition:BasicVectorBundleOverHiggsGmGerbeCech}
A \emph{basic vector bundle on a Higgs $\mathfrak{U}$-$\GG_m$-gerbe $\big( \underline{\alpha}', \underline{\omega}', \underline{F}' \big)$ over $X$} is a collection $\big( \underline{\E}', \underline{\phi}, \underline{g}' \big)$ consisting of an $\alpha'$-twisted vector bundle $\big( \underline{\E}', \underline{g}' \big)$ on $X$ together with Higgs fields $\phi_i$ on $\E'_i$ satisfying the equations
\[
\phi_i - g'_{ij} \phi_j \, (g'_{ij})^{-1} = \omega'_{ij} \quad\text{and}\quad C(\phi_i) = F'_i.
\]
\end{definition}

\subsubsection{The nonabelian Hodge correspondence for twisted vector bundles.}

We are finally in a position to state a (weak) form of the first main theorem of this paper.

\begin{theorem}
\label{theorem:MainTheoremGLnCech}
Let $\big( \underline{\alpha}, \underline{\omega}, \underline{F} \big)$ be a flat $\mathfrak{U}$-$\GG_m$-gerbe over $X$. Then there is a Higgs $\mathfrak{U}$-$\GG_m$-gerbe, $\big( \underline{\alpha}', \underline{\omega}', \underline{F}' \big)$, over $X$ for which there is a fully faithful functor
\[
\begin{Bmatrix}
\text{Basic vector bundles of rank } n \\ \text{on } \big( \underline{\alpha}, \underline{\omega}, \underline{F} \big)
\end{Bmatrix}
\lhook\joinrel\longrightarrow
\begin{Bmatrix}
\text{Basic vector bundles of rank } n \\ \text{on } \big( \underline{\alpha}', \underline{\omega}', \underline{F}' \big)
\end{Bmatrix}
\]
Conversely, given a Higgs $\mathfrak{U}$-$\GG_m$-gerbe, $\big( \underline{\alpha}', \underline{\omega}', \underline{F}' \big)$, there exists a flat $\mathfrak{U}$-$\GG_m$-gerbe, $\big( \underline{\alpha}, \underline{\omega}, \underline{F} \big)$, for which the same conclusion holds.
\end{theorem}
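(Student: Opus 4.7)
The plan is to reduce the twisted correspondence to the classical (untwisted) nonabelian Hodge theorem of \cite{MR1179076} by passing to projectivizations, exploiting the observation recorded in \S\ref{section:RemarksAboutTorsionAndProjectivization} that all $\GG_m$-gerbe twisting is killed by the projection $GL_n \twoheadrightarrow \PP GL_n$. Given a basic vector bundle $\big( \underline{\E}, \underline{\nabla}, \underline{g} \big)$ on a flat $\mathfrak{U}$-$\GG_m$-gerbe $\big( \underline{\alpha}, \underline{\omega}, \underline{F} \big)$, I would first form the projectivizations $\PP(\E_i)$: the $\underline{\alpha}$-twisted cocycle identity for $\underline{g}$ becomes a genuine cocycle in $\PP GL_n$, so the $\PP(\E_i)$ assemble into an honest $\PP^{n-1}$-bundle $\PP(\E)$ on $X$. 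The connections $\nabla_i$ descend to connections $\bar{\nabla}_i$ on each $\PP(\E_i)$, and since $\omega_{ij}$ and $F_i$ take values in the scalars (which vanish in $\mathfrak{pgl}_n$), they agree on overlaps and have zero curvature; together they produce an honest flat connection $\bar{\nabla}$ on $\PP(\E)$.

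Next, apply the classical nonabelian Hodge correspondence in its $\PP GL_n$ form (which follows from the $GL_n$ case plus the functoriality of the correspondence under group homomorphisms) to $(\PP(\E), \bar{\nabla})$, after imposing the appropriate Chern-class and semistability hypotheses on the source category. This yields a Higgs $\PP GL_n$-bundle on $X$. The obstruction to lifting it to an honest $GL_n$-Higgs bundle is a class in $H^2(X, \O^\times_X)$, which, after possibly refining $\mathfrak{U}$, is represented by a 2-cocycle $\underline{\alpha}'$. Local rank-$n$ lifts $\E'_i$ of the projective bundle, together with lifts $\phi_i$ of the projective Higgs field, then extract cochains $\underline{\omega}' \in \check{C}^1(\mathfrak{U}, \Omega^1_X)$ and $\underline{F}' \in \check{C}^0(\mathfrak{U}, \Omega^2_X)$ measuring the failure of these lifts to glue and to satisfy the Higgs equation, respectively. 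Because both obstructions are scalar-valued, the triple $\big( \underline{\alpha}', \underline{\omega}', \underline{F}' \big)$ satisfies the cocycle conditions for $\mathrm{Dol}^{\GG_m}_X$ and thus defines a Higgs $\mathfrak{U}$-$\GG_m$-gerbe, on which $\big( \underline{\E}', \underline{\phi}, \underline{g}' \big)$ is by construction a basic vector bundle.

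The assignment is functorial, and fully faithfulness reduces, after projectivization, to fully faithfulness of classical NAH on $\PP GL_n$-bundles: an isomorphism of basic bundles on the flat gerbe projects to an isomorphism of flat projective bundles, whose NAH image is an isomorphism of projective Higgs bundles; the promotion to a $GL_n$-isomorphism on the Higgs side is then forced by the matching of the central twistings. The converse direction, starting from a Higgs gerbe, is entirely symmetric, substituting $\mathrm{dR}^{\GG_m}_X$ with $\mathrm{Dol}^{\GG_m}_X$ throughout and running classical NAH in the opposite direction.

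The main obstacle is the bookkeeping of the central lifting data: one must verify that the triple $\big( \underline{\alpha}', \underline{\omega}', \underline{F}' \big)$ is well-defined up to equivalence of Higgs $\mathfrak{U}$-$\GG_m$-gerbes independently of the local lifts chosen, and that the resulting functor is canonical rather than depending on these choices. This is tractable at the level of \v Cech cochains but combinatorially delicate; presumably the machinery of $\infty$-topoi and principal $\infty$-bundles invoked in the body of the paper is engineered precisely to package the central extensions appearing here into a functorial object, sidestepping the explicit cocycle juggling.
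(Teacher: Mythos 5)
Your strategy---projectivize, run classical NAH on the resulting flat $\PP GL_n$-bundle, then extract scalar obstruction data from local lifts to build the Higgs gerbe---is exactly what the paper calls the ``obvious attempt'' in \S\ref{section:ObviousAttempt}, and the paper explicitly explains why that attempt cannot be carried through without an additional idea. In the limit presentation of the category of basic $GL_n$-torsors, the $\PP GL_n$-factors do match under NAH; the problem is the $\GG_m$-gerbe factor. As \S\ref{section:HodgeCorrespondenceGerbes} shows, there is \emph{no} Hodge correspondence for $\GG_m$-gerbes (nor for any full subcategory of them), because the automorphism $1$-categories $\Map\!\big(X_\mathrm{dR},B\GG_m\big)$ and $\Map\!\big(X_\mathrm{Dol},B\GG_m\big)$ are genuinely inequivalent: NAH identifies flat line bundles only with \emph{degree zero} Higgs line bundles. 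Because these automorphisms enter the limit presentation at the level of objects, your claim that ``the promotion to a $GL_n$-isomorphism on the Higgs side is then forced by the matching of the central twistings'' asserts, rather than proves, both the well-definedness of the target Higgs $\GG_m$-gerbe and the full faithfulness of the functor. The ``combinatorially delicate'' bookkeeping you defer is not a cleanup task; it is where the content of the theorem lives.

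The missing ingredient is precisely what the paper develops in \S\ref{section:TorsionPhenomena} and \S\ref{section:Proof}. The obstruction gerbe of a $\PP GL_n$-torsor always lifts to a $\mu_n$-gerbe, because $\det: GL_n\to\GG_m$ is a surjection restricting to a surjection on centers (diagram \eqref{eq:MasterDiagramGL_n}), and the Hodge correspondence \emph{does} hold for $\mu_n$-gerbes since $\mu_n$ has trivial Lie algebra (Proposition \ref{proposition:HodgeCorrespondenceGerbes}). The argument therefore lifts the flat $\GG_m$-gerbe $\theta$ to a flat $\mu_n$-gerbe $\theta'$, decomposes the category of basic $GL_n$-torsors into ``rectifiable pieces'' indexed by the choices of such lift (Definition \ref{definition:Rectifiability}, Corollary \ref{Cor:UnionRectifiablePieces}), compares these pieces one at a time via the $\mu_n$-gerbe correspondence (Proposition \ref{proposition:CorrespondenceBasicHPrimeTorsors}, Corollary \ref{cor:HodgeCorrespondenceRectifiableTorsors}), and checks---Lemma \ref{lemma:CorrespondenceComponents} and the surrounding discussion---that the resulting Higgs $\GG_m$-gerbe is independent of the choice of lift. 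Your sketch points in the right direction but stops exactly where the nontrivial argument must begin.
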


\subsection{Groups other then \texorpdfstring{$GL_n$}{GL\_ n}}
\label{section:GroupsOtherThanGLn}

\subsubsection{The (classical) nonabelian Hodge theorem for torsors.}

Let $G$ be a linear algebraic group over $\CC$, viewed either as a group scheme in the {\'e}tale topology or as complex Lie group, and denote $\mathfrak{g} = \operatorname{Lie} G$.

Let $P \to X$ be an $G$-torsor\footnote{Going forward we will often use the same letter for denoting a linear algebraic group over $\CC$ and the sheaf on $X$ obtained by pullback. The reader should use the second interpretation in order to bring our use of the term torsor in line with the usual definition in algebraic geometry. As an example of future appearances of this convention, we write $\GG_m$-gerbes for what some authors (e.g., \cite{MR2399730}) call $\O^\times_X$-gerbes.} on $X$. Denote by $R: P \times G \to P$ the canonical right action of $G$ on $P$, and by $R_g: P \to P$ ($g \in G$) and $R_p: G \to P$ ($p \in P$) the obvious restrictions. A \emph{connection} on $P \to X$ is a global section $\eta \in H^0(P, \Omega^1_P \otimes \mathfrak{g})$ satisfying the following two conditions:
\begin{itemize}[topsep=-3pt,partopsep=0pt,parsep=0pt,itemsep=1pt]
\item (\emph{$Ad$-equivariance}) under right multiplication by $G$, the connection form $\eta$ transforms via the adjoint representation of $G$ on $\mathfrak{g}$, i.e., $(R_g)^\ast \eta = Ad_{g^{-1}} (\eta)$ for every $g \in G$; and
\item for every $p \in P$, the pullback $(R_p)^\ast \eta$ coincides with the Maurer--Cartan form of $G$.
\end{itemize}
The difference between two connections, $\eta_1$ and $\eta_2$, is, evidently, a global section of the same bundle, while the \emph{curvature} of a connection $\eta$ is defined through the Cartan formula:
\[
C(\eta) = d\eta + \frac{1}{2}\, [\eta, \eta] \in H^0(P, \Omega^2_P \otimes \mathfrak{g}),
\]
where $[-,-]$ is the symmetric bilinear product consisting of the Lie bracket on $\mathfrak{g}$ and the wedge product of one-forms. A $G$-torsor equipped with a connection of vanishing curvature is called \emph{flat}.

On the surface, this seems different than the case of vector bundles: here the relevant forms live on the total space of the torsor rather than on the base. However, they are of a very special kind: both $\eta_1 - \eta_2$ and $C(\eta)$ are $Ad$-equivariant and horizontal ---that is, their pullbacks by $R_p$ vanish for every $p \in P$. Forms on $P$ satisfying these two conditions are called \emph{tensorial}\footnote{The term \emph{basic} is also in use in the literature, but it conflicts with our use of that word.}, and are in one-to-one correspondence with forms on $X$ with values in the adjoint bundle $Ad\, P = P \times_{Ad} \mathfrak{g} \to X$. We shall henceforth make no notational distinction between forms on $X$ with values in $Ad\, P$ and their corresponding forms on $P$ with values in $\mathfrak{g}$.

A \emph{Higgs $G$-torsor} is a $G$-torsor $Q \to X$ equipped with a global section $\phi \in H^0(X, \Omega^1_X \otimes Ad\, Q)$. The curvature $C(\phi)$ of $\phi$ is defined as $[\phi, \phi] \in H^0(X, \Omega^2_X \otimes Ad\, Q)$. Differences between Higgs fields also belong to $H^0(X, \Omega^1 \otimes Ad\, Q)$.

A version of the nonabelian Hodge theorem for $G$-torsors follows formally from that of vector bundles and Tannakian considerations \cite[\S 6]{MR1179076}. Roughly, a linear representation $\rho: G \to \operatorname{End} V$ determines a pair of functors mapping flat $G$-torsors  (resp., Higgs $G$-torsors) to flat bundles (resp., Higgs bundles) ---at the level of the underlying, bare torsors, this is just the associated bundle construction. A Higgs $G$-torsor is said to be semistable and to have vanishing Chern numbers if so does the Higgs bundle associated to it by any representation of $G$. We then have an equivalence between the category of flat $G$-torsors on $X$, on the one hand, and that of semistable Higgs $G$-torsors with vanishing Chern numbers on the other.

\subsubsection{The nonabelian Hodge correspondence for twisted torsors.}

Let $H$ be a linear algebraic group over $\CC$, and $A \subset H$ a closed central subgroup\footnote{Nothing is lost by assuming that $A$ is, in fact, the whole center of $H$.}. As in \S\ref{section:TwistedVectorBundles}, we view them either as group schemes in the {\'e}tale topology, or as complex Lie groups endowed with their analytic topology. In both cases, the quotient $K = H/A$ is again a linear algebraic group over $\CC$ (see \S\ref{section:LinearAlgebraicGroups}). Denote by $\mathfrak{h}$, $\mathfrak{a}$ and $\mathfrak{k}$ the Lie algebras of $H$, $A$ and $K$, respectively.

Since $A$ is central in $H$, so is $\mathfrak{a}$ in $\mathfrak{h}$. The adjoint bundle of any $H$-torsor thus contains the trivial vector bundle with fiber $\mathfrak{a}$ as a subbundle. In particular, we can consider forms on the base with values in $\mathfrak{a}$ as $Ad$-equivariant, horizontal forms on the total space of the $H$-torsor.

We now state the obvious generalizations of Definitions \ref{definition:GmGerbeCech}--\ref{definition:BasicVectorBundleOverHiggsGmGerbeCech} without further comment.

\begin{definition}
[(cf. Definition \ref{definition:GmGerbeCech})]
\label{definition:AGerbeCech}
A $\mathfrak{U}$-$A$-gerbe over $X$ is a 2-cocycle $\underline{\alpha} \in \check{Z}^2(\mathfrak{U}, A)$.
\end{definition}

\begin{definition}
[(cf. Definition \ref{definition:BasicVectorBundleOverGmGerbeCech})]
\label{definition:BasicHTorsorOverAGerbeCech}
A \emph{basic $H$-torsor on a $\mathfrak{U}$-$A$-gerbe $\underline{\alpha}$ over $X$} is a collection
\[
\Big( \underline{P} = \{ P_i \}_{i \in I}, \; \underline{h} = \{ h_{ij} \}_{i, j \in I} \Big)
\]
of $H$-torsors $P_i$ on $U_i$, together with isomorphisms $h_{ij}: P_j |_{U_{ij}} \to P_i |_{U_{ij}}$ satisfying $h_{ii} = \operatorname{id}_{P_i}$, $h_{ij} = h_{ji}^{-1}$, and the $\underline{\alpha}$-twisted cocycle condition,
\[
h_{ij} h_{jk} h_{ki} = \alpha_{ijk} \operatorname{id}_{P_i},
\]
on $U_{ijk}$ for any $i, j, k \in I$. In parallel with the case of the $GL_n$, we also use the term \emph{$\alpha$-twisted $H$-torsor on $X$} (cf. Definition \ref{definition:TwistedSheaf}).
\end{definition}

\begin{definition}
[(cf. Definition \ref{definition:FlatGmGerbeCech})]
\label{definition:FlatAGerbeCech}
A \emph{flat $\mathfrak{U}$-$A$-gerbe over $X$} is the datum of a 2-cocycle $\big( \underline{\alpha}, \underline{\omega}, \underline{F} \big) \in \check{\ZZ}^2 \big( \mathfrak{U}, \mathrm{dR}^A_X \big)$, where
\[
\mathrm{dR}^A_X := \Big[ A \xrightarrow{\;\;a \,\mapsto\, a^{-1}da\;\;} \Omega^1_X \otimes \mathfrak{a} \xrightarrow{\quad d \quad} \Omega^2_X \otimes \mathfrak{a} \xrightarrow{\quad d \quad} \cdots \Big].
\]
is the \emph{$A$-de Rham complex of $X$}.
\end{definition}

The map that we denoted $a \mapsto a^{-1}da$ above is perhaps most clearly expressed in differential-geometric terms: it sends a local section $a: U \to A$ to the $1$-form with values in $\mathfrak{a}$ whose value at $x \in U$ corresponds to
\[
T_x X \xrightarrow{\;\;Ta\;\;} T_{a(x)} A \xrightarrow{\;\;(L_{a^{-1}(x)})_\ast\;\;} T_e A \cong \mathfrak{a}.
\]
For $A = \GG_m$ it is just the logarithmic exterior derivative of \eqref{eq:deRhamComplexG_m}, while for $A = \GG_a$ it reduces to the usual exterior derivative.

\begin{definition}
[(cf. Definition \ref{definition:BasicVectorBundleOverFlatGmGerbeCech})]
\label{definition:BasicHTorsorOverFlatAGerbeCech}
A \emph{basic $H$-torsor on a flat $\mathfrak{U}$-$A$-gerbe $\big( \underline{\alpha}, \underline{\omega}, \underline{F} \big)$ over $X$} is a collection $\big( \underline{P}, \underline{\eta}, \underline{h} \big)$ consisting of a basic $H$-torsor $\big( \underline{P}, \underline{h} \big)$ on $\underline{\alpha}$ together with connections $\eta_i$ on $P_i$ satisfying the equations
\[
\eta_i - h_{ij} \eta_j \, h_{ij}^{-1} = \omega_{ij} \quad\text{and}\quad C(\eta_i) = F_i.
\]
\end{definition}

\begin{definition}
[(cf. Definition \ref{definition:HiggsGmGerbeCech})] \label{definition:HiggsAGerbeCech}
A \emph{Higgs $\mathfrak{U}$-$A$-gerbe} over $X$ is the datum of a 2-cocycle $\big( \underline{\alpha}', \underline{\omega}', \underline{F}' \big) \in \check{\ZZ}^2 \big( \mathfrak{U}, \mathrm{Dol}^A_X \big)$, where
\[
\mathrm{Dol}^A_X := \Big[ A \xrightarrow{\quad 0 \quad} \Omega^1_X \otimes \mathfrak{a} \xrightarrow{\quad 0 \quad} \Omega^2_X \otimes \mathfrak{a} \xrightarrow{\quad 0 \quad} \cdots \Big].
\postdisplaypenalty=500
\]
is the \emph{$A$-Dolbeault complex of $X$}.
\end{definition}

\begin{definition}
[(cf. Definition \ref{definition:BasicVectorBundleOverHiggsGmGerbeCech})]
\label{definition:BasicHTorsorOverHiggsAGerbeCech}
A \emph{basic $H$-torsor on a Higgs $\mathfrak{U}$-$A$-gerbe $\big( \underline{\alpha}', \underline{\omega}', \underline{F}' \big)$ over $X$} is a collection $\big( \underline{P}', \underline{\phi}, \underline{h}' \big)$ consisting of a basic $H$-torsor $\big( \underline{P}', \underline{h}' \big)$ on $\underline{\alpha}$ together with Higgs fields $\phi_i$ on $P'_i$ satisfying the equations
\[
\phi_i - h'_{ij} \phi_j \, (h'_{ij})^{-1} = \omega'_{ij} \quad\text{and}\quad C(\phi_i) = F'_i.
\]
\end{definition}

With these definitions, it is easy to write down the analogue of Theorem \ref{theorem:MainTheoremGLnCech} for twisted $H$-torsors. Its validity, however, is constrained by two conditions on the group $H$. The first one is that $H$ be connected (this is a technical condition: see Remark \ref{remark:ConnectednessOfH}). In the algebraic category, we also need to impose that $K$ be reductive (see \S\ref{section:Analytification}). Subject to these, we have the following statement.

\begin{theorem}
\label{theorem:MainTheoremCech}
Let $\big( \underline{\alpha}, \underline{\omega}, \underline{F} \big)$ be a flat $\mathfrak{U}$-$A$-gerbe over $X$. Then there is a Higgs $\mathfrak{U}$-$A$-gerbe, $\big( \underline{\alpha}', \underline{\omega}', \underline{F}' \big)$, over $X$ for which there is a fully faithful functor
\[
\begin{Bmatrix}
\text{Basic $H$-torsors} \\ \text{on } \big( \underline{\alpha}, \underline{\omega}, \underline{F} \big)
\end{Bmatrix}
\lhook\joinrel\longrightarrow
\begin{Bmatrix}
\text{Basic $H$-torsors} \\ \text{on } \big( \underline{\alpha}', \underline{\omega}', \underline{F}' \big)
\end{Bmatrix}
\]
Conversely, given a Higgs $\mathfrak{U}$-$A$-gerbe, $\big( \underline{\alpha}', \underline{\omega}', \underline{F}' \big)$, there exists a flat $\mathfrak{U}$-$A$-gerbe, $\big( \underline{\alpha}, \underline{\omega}, \underline{F} \big)$, for which the same conclusion holds.
\end{theorem}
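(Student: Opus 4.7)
The plan is to reduce this theorem to the classical nonabelian Hodge correspondence for the connected reductive quotient $K = H/A$, together with abelian Hodge theory applied to the $\mathfrak{a}$-valued complexes $\mathrm{dR}^A_X$ and $\mathrm{Dol}^A_X$. The centrality of $A$ in $H$ is crucial: it ensures that all the twisting data —both the gerbe 2-cocycle and the $\mathfrak{a}$-valued discrepancies between local connections and curvatures— dies upon passing to $K$, so that a basic $H$-torsor on a flat $\mathfrak{U}$-$A$-gerbe descends to an honest flat $K$-torsor on $X$. This mirrors the projectivization construction highlighted in \S\ref{section:RemarksAboutTorsionAndProjectivization} for the $GL_n$ case.

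More precisely, given a basic $H$-torsor $\big( \underline{P}, \underline{\eta}, \underline{h} \big)$ on a flat $\mathfrak{U}$-$A$-gerbe $\big( \underline{\alpha}, \underline{\omega}, \underline{F} \big)$, I would set $Q_i = P_i / A$ on each $U_i$ and observe that the $\underline{\alpha}$-twisted cocycle condition $h_{ij} h_{jk} h_{ki} = \alpha_{ijk}$ becomes the honest cocycle condition for the induced transitions $\bar{h}_{ij}\colon Q_j|_{U_{ij}} \to Q_i|_{U_{ij}}$, since $A$ is central; hence the $Q_i$ glue to a global $K$-torsor $Q$ on $X$. The local connections $\eta_i$ reduce to $\mathfrak{k}$-valued connections $\bar{\eta}_i$ on $Q_i$, and because $\omega_{ij}, F_i$ take values in $\mathfrak{a}$ they vanish modulo $\mathfrak{a}$, so the $\bar{\eta}_i$ agree on overlaps and have vanishing curvature, gluing to a flat $K$-connection $\bar{\eta}$ on $Q$. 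This quotient procedure extends to morphisms, defining a functor to flat $K$-torsors on $X$; an entirely parallel construction produces a Higgs $K$-torsor from a basic $H$-torsor on a Higgs $\mathfrak{U}$-$A$-gerbe.

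The middle step is to invoke Simpson's nonabelian Hodge theorem for the connected reductive group $K$ \cite[\S 6]{MR1179076}, producing an equivalence between the pertinent subcategories of flat and Higgs $K$-torsors on $X$ (semistable with vanishing Chern numbers). In parallel, use the Hodge decomposition of the $\mathfrak{a}$-valued cohomology of $X$ to compare $\check{\HH}^2 \big( \mathfrak{U}, \mathrm{dR}^A_X \big)$ with $\check{\HH}^2 \big( \mathfrak{U}, \mathrm{Dol}^A_X \big)$: transporting the class of $\big( \underline{\alpha}, \underline{\omega}, \underline{F} \big)$ across this comparison and representing it by an explicit cocycle produces the Higgs $\mathfrak{U}$-$A$-gerbe $\big( \underline{\alpha}', \underline{\omega}', \underline{F}' \big)$ whose existence is asserted in the theorem. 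To close the loop, given the Higgs $K$-torsor $(Q', \phi')$ coming out of Simpson's theorem, I would lift locally: choose $H$-torsors $P'_i$ on $U_i$ trivializing $Q'|_{U_i}$ modulo $A$, and Higgs fields $\phi_i$ on $P'_i$ reducing to $\phi'|_{U_i}$. The failure of the resulting transitions $h'_{ij}$ to satisfy the honest cocycle condition, of the $\phi_i$ to agree on overlaps, and of their curvatures to vanish, is controlled precisely by the components of $\big( \underline{\alpha}', \underline{\omega}', \underline{F}' \big)$ for a suitable choice of local data. The composition of these three steps is the claimed fully faithful embedding; the converse direction proceeds by the mirror argument, inverting both Simpson's correspondence and the abelian Hodge comparison.

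The main obstacle is threading these three steps together coherently; in particular, ensuring that the Higgs $A$-gerbe produced via abelian Hodge theory in the middle step matches, on the nose, the obstruction gerbe for the local lifts in the third step, and that this matching is functorial in the input. The connectedness of $H$ supplies enough flexibility for the required lifts to exist (cf.\ Remark \ref{remark:ConnectednessOfH}), and the paper's $\infty$-topos and principal $\infty$-bundle framework \cite{arXiv1207.0248} is precisely what makes the coordination systematic: it organizes the fiber sequence $\mathbf{B}A \to \mathbf{B}H \to \mathbf{B}K$, together with its flat and Higgs variants, into a single coherent diagram of moduli stacks, so that the passage between the de Rham and Dolbeault sides automatically respects the $A$-gerbe structures. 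Full faithfulness on morphisms reduces, by the same quotient-and-lift procedure, to the full faithfulness of Simpson's correspondence together with the bijectivity of the abelian Hodge comparison on $\mathfrak{a}$-valued data.
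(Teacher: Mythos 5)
Your overall strategy — quotienting by the central $A$ to land on flat/Higgs $K$-torsors, invoking Simpson for $K$, and then trying to re-lift — is indeed the paper's ``obvious attempt'' described in \S\ref{section:ObviousAttempt}. But there is a genuine gap in your middle step, and it is precisely the point the paper flags as ``the only thing that makes the twisted correspondence non-trivial.''

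You propose to ``use the Hodge decomposition of the $\mathfrak{a}$-valued cohomology of $X$ to compare $\check{\HH}^2(\mathfrak{U},\mathrm{dR}^A_X)$ with $\check{\HH}^2(\mathfrak{U},\mathrm{Dol}^A_X)$,'' and later invoke the ``bijectivity of the abelian Hodge comparison on $\mathfrak{a}$-valued data.'' No such comparison exists when $A$ contains an algebraic torus. The paper shows this explicitly in \S\ref{section:HodgeCorrespondenceGerbes}: the automorphism categories of $\GG_m$-gerbes on $X_\mathrm{dR}$ and $X_\mathrm{Dol}$ are $\Map(X_\mathrm{dR}, B\GG_m)$ and $\Map(X_\mathrm{Dol}, B\GG_m)$, i.e.\ flat vs.\ Higgs line bundles, and these match only on the degree-zero subcategory of the latter. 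Any comparison of $\GG_m$-gerbes would therefore have to cut down not just on objects but on $1$-morphisms, which is incompatible with an equivalence of categories. Proposition \ref{proposition:HodgeCorrespondenceGerbes} is accordingly restricted to $A\cong \GG_a^{\oplus m}\oplus F$ with $F$ finite. The prototypical case $A=\GG_m$, $H=GL_n$ falls squarely outside your argument.

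What closes the gap is the torsion mechanism of \S\ref{section:TorsionPhenomena}--\S\ref{section:DisgressionOnAlgebraicGroups}: Proposition \ref{proposition:MasterDiagram} produces a surjection $\kappa\colon H\to\GG_m^{\oplus r}$ (the determinant for $GL_n$) whose restriction to $A$ is still surjective with kernel $A'$ containing no torus; Lemma \ref{lemma:NoBasicHTorsorsForNonKappaTorsionGerbes} then shows the category of basic $H$-torsors is empty unless the $A$-gerbe is $\kappa$-torsion, i.e.\ lifts to an $A'$-gerbe; and the Hodge correspondence \emph{does} hold for $A'$-gerbes (Proposition \ref{proposition:HodgeCorrespondenceGerbes}). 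One then works with basic $H'$-torsors on the $A'$-gerbe lift (where $H'=\ker\kappa$) and descends via the ``rectifiability'' machinery of \S\ref{section:ToruslessGerbes} and \S\ref{section:LiftingFlatHiggsAGerbes}, keeping track of the set of lifts $L(\mathfrak{X})([\alpha])$. Your proof would need this detour; as written, the middle step is false and no amount of coherence bookkeeping in the $\infty$-topos framework will repair it.
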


\subsection{Outlook}
\label{section:Outlook}

\subsubsection{}

For all that Theorems \ref{theorem:MainTheoremGLnCech} and \ref{theorem:MainTheoremCech} do offer twisted versions of the nonabelian Hodge theorem, they are not without several important shortcomings.

\begin{itemize}
\item The most obvious one is that they are stated in terms of choices, not only of a cover $\mathfrak{U}$ of $X$, but also of explicit representatives for all of the objects involved. It is possible ---though rather laborious and utterly unilluminating--- to show by brute force that the statements are indeed independent of such choices. However, their explicit dependence becomes truly problematic when trying to provide proofs based on the classical nonabelian Hodge theorem, for the latter requires the compactness assumption on the base manifold; that is, we cannot just break up these twisted correspondences into local pieces.
\item Somewhat related is the issue of characterizing the essential image of the functors: in these local formulations, it is unclear how to do so. Once again, the only hope comes from trying to use Simpson's theorem.
\item On a different note, the decision to allow the locally defined connections and Higgs fields to have central curvature and to differ by something central on double intersections seems arbitrary at best.
\end{itemize}

A significant portion of our proofs of Theorems \ref{theorem:MainTheoremGLnCech} and \ref{theorem:MainTheoremCech} consists of finding manifestly cover- and cocycle-independent versions of them (Theorems \ref{theorem:MainTheoremGLn} and \ref{theorem:MainTheorem}) that not only rid us of choices but also reveal the naturality of considering central twistings.

\subsubsection{Organization of this paper.}

In \S\ref{section:Gerbes} we look at a particular class of (1-)stacks : $A$-banded gerbes. These provide a geometric realization of degree 2 cohomology classes with values in $A$, and twisted $H$-torsors on their base can be thought of as honest $H$-torsors on them (see \S\ref{section:TorsorsOverGerbes}).

The setting of classical (1-)stacks \cite{MR0262240} is, however, not enough for our purposes. The main limitation of the definition of $A$-banded gerbes in \S\ref{section:Gerbes} is that it still depends on the existence of a cover of the base. As such, it does not work over nongeometric bases ---which we need in order to define flat and Higgs gerbes. The existence of a classifying (2-)stack for $A$-banded gerbes in the theory of principal $\infty$-bundles \cite{arXiv1207.0248} of \S\ref{section:GerbesAsPrincipalInftyBundles} overcomes this difficulty while providing us with a host of powerful techniques that enable us to reduce statements about twisted torsors to simpler ones about untwisted torsors and gerbes.

After a technical interlude (\S\ref{section:1LocalicInftyTopoi}), we recall (\S\ref{section:CohesiveStructures}) the theory of the de Rham construction \cite{ST_deRhamInftyStacks} from the perspective of $\infty$-stacks. In \S\ref{section:CaseOfSmoothProjectiveVariety} we restate Simpson's nonabelian Hodge theorem in this language, and reinterpret some classical abelian Hodge theory results as a Hodge correspondence for gerbes. The passage between the algebraic and the analytic worlds is the object of 
\S\ref{section:Analytification}.

In \S\ref{section:TorsionPhenomena} we go back to the original case of twisted vector bundles, now seen in the light of all the tools developed in previous sections. The failure of the obvious strategy of proof ---as well as how it needs to be modified--- is already visible in this simplest example. After explaining in what generality we hope to be able to prove our result and why (\S\ref{section:DisgressionOnAlgebraicGroups}), we take a step back and collect together the exact statements we will prove in \S\ref{section:Proof}, as well as the assumptions they rely on.

\subsubsection{An important remark about topologies.}

At the beginning of this introduction we mentioned that we work either in the analytic or the {\'e}tale topologies. More precisely, throughout this paper we consider sheaves, stacks and $\infty$-stacks over any one of the following Grothendieck sites:
\begin{itemize}
\item $(\mathsf{Aff}_\CC, \text{{\'e}t})$: the category of affine complex schemes equipped with the {\'e}tale topology, or
\item $(\mathsf{An}, \text{{\'e}t})$: the site of complex analytic spaces endowed with the topology in which covers are jointly surjective collections of local isomorphisms ---also known as the analytic {\'e}tale topology.
\end{itemize}
In a couple of sections (\S\ref{section:CohesiveStructures} and \S\ref{section:Analytification}) we also need the auxiliary site
\begin{itemize}
\item $(\mathsf{Aff}_{\CC, \mathrm{ft}}, \text{{\'e}t})$: the full subsite of $(\mathsf{Aff}_\CC, \text{{\'e}t})$ consisting of affine schemes of finite type over $\operatorname{Spec}\CC$ with the induced topology.
\end{itemize}

Many of our arguments are of a homotopical character: they deal with the formal structure of the $\infty$-topoi in which the objects involved live in. As such, they are rather topology-agnostic: about the only fact that is explicitly dependent on the sites detailed above is that
\[
1 \to A \to H \to K \to 1
\]
is a short exact sequence of (0-truncated $\infty$-)group objects, with $A \subset Z(H)$ (see \S\ref{section:LinearAlgebraicGroups}) ---a fact we already mentioned at the beginning of \S\ref{section:GroupsOtherThanGLn}.

The reader should keep in mind that all of our arguments and results are to be understood to hold in both topologies of interest unless otherwise stated. In particular, \S\ref{section:GeometrizingTwistedTorsors}, \S\ref{section:TorsionPhenomena} and \S\ref{section:Proof} are deliberately vague, while \S\ref{section:HodgeTheory}, \S\ref{section:DisgressionOnAlgebraicGroups} and \S\ref{section:StatementMainTheorems} are explicit in the choice of topology.
 
\section{Geometrizing twisted torsors}
\label{section:GeometrizingTwistedTorsors}

\subsection{Gerbes}
\label{section:Gerbes}

\begin{definition}[\cite{MR0344253}]
Let $\mathfrak{Y}$ be a (1-)stack over $X$. We say that $\mathfrak{Y}$ is a \emph{gerbe over $X$} if it is locally nonempty and locally connected.
\end{definition}

The first of these conditions means that there exists an open cover $\{ U_i \to X \}_{i \in I}$ of $X$ such that the canonical maps $\mathfrak{Y} \times_X U_i \to U_i$ all have global sections, while the second one ensures that we can choose said cover such that the groupoid of global sections of each $\mathfrak{Y} \times_X U_i \to U_i$ contains a single isomorphism class.

\begin{definition}[\cite{MR0344253,MR1086889}]
Let $G$ be a linear algebraic group over $\CC$, and $\mathfrak{Y}$ be a (1-)stack over $X$. We say that $\mathfrak{Y}$ is a \emph{$G$-gerbe over $X$} if it is locally isomorphic to $BG \times X$.
\end{definition}

Equivalently, we have $\mathfrak{Y} \times_X U_i \simeq BG \times U_i$ over $U_i$ for each $i \in I$ in a suitable cover. As a sanity check, note that a $G$-gerbe is a gerbe.

The attentive reader might have noticed that the concept of a $G$-gerbe bears a striking similarity to that of a fiber bundle, only now both the fiber, $BG$, and the structure group, $\underline{\operatorname{Aut}}(BG)$, are bona fide stacks ---rather than $0$-truncated objects. We will formalize this thought in \S\ref{section:GerbesAsPrincipalInftyBundles} using the notion of $\infty$-bundles \cite{arXiv1207.0248}.

However, we can manage with the theory of crossed modules \cite{MR0017537} to prove that $G$-gerbes are classified by $H^1(X, \underline{\operatorname{Aut}}(BG))$ \cite{MR1086889}. Here the automorphism stack of $BG$ ---which goes also by the name of the automorphism 2-group of $G$ \cite{MR2068521}---  is represented by the crossed module $G \to \operatorname{Aut}(G)$, and $H^1$ refers to crossed module cohomology. The exact sequence of crossed modules,
\begin{equation} \label{eq:diagramCrossedModules}
1 \longrightarrow \Big[ G \to \operatorname{Inn}(G) \Big] \longrightarrow  \Big[ G \to \operatorname{Aut}(G) \Big] \longrightarrow  \Big[ 1 \to \operatorname{Out}(G) \Big] \longrightarrow 1,
\end{equation}
induces an exact sequence of pointed sets,
\begin{equation} \label{eq:diagramBand}
H^1(X, G \to \operatorname{Inn}(G)) \longrightarrow H^1(X, G \to \operatorname{Aut}(G)) \xrightarrow{\;\;\beta\;\;} H^1(X, \operatorname{Out}(G)).
\end{equation}
Given a $G$-gerbe $\mathfrak{Y}$ over $X$, the $\operatorname{Out}(G)$-torsor classified by $\beta([\mathfrak{Y}])$ is called the \emph{$G$-band of $\mathfrak{Y}$}.

\begin{definition}
\label{definition:bandedGerbes}
A $G$-gerbe over $X$ is called a \emph{$G$-banded gerbe} if its $G$-band is the trivial $\operatorname{Out}(G)$-torsor on $X$.
\end{definition}

In case $G = A$ is \emph{abelian}, the diagram \eqref{eq:diagramCrossedModules} simplifies to
\[
1 \longrightarrow \Big[ A \to 1 \Big] \longrightarrow  \Big[ A \to \operatorname{Aut}(A) \Big] \longrightarrow  \Big[ 1 \to \operatorname{Aut}(A) \Big] \longrightarrow 1
\]
and the sequence \eqref{eq:diagramBand} yields
\[
0 \longrightarrow H^2(X, A) \longrightarrow H^1(X, A \to \operatorname{Aut}(A)) \xrightarrow{\;\;\beta\;\;} H^1(X, \operatorname{Aut}(A)).
\]

\begin{proposition}
\label{proposition:ClassificationBandedAGerbes}
$A$-banded gerbes over $X$ are classified by $H^2(X, A)$.
\end{proposition}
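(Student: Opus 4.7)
The plan is to read off the statement directly from the exact sequence of pointed sets displayed just before Proposition \ref{proposition:ClassificationBandedAGerbes}, combined with the classification of $A$-gerbes by crossed-module cohomology $H^1(X, A \to \operatorname{Aut}(A))$ recalled from \cite{MR1086889}.

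First I would translate Definition \ref{definition:bandedGerbes} into cohomological terms: an $A$-gerbe $\mathfrak{Y}$ is $A$-banded precisely when its class $[\mathfrak{Y}] \in H^1(X, A \to \operatorname{Aut}(A))$ is mapped by $\beta$ to the basepoint of $H^1(X, \operatorname{Aut}(A))$, since the latter pointed set classifies $\operatorname{Aut}(A)$-torsors with the trivial torsor as basepoint. Hence the set of isomorphism classes of $A$-banded gerbes on $X$ is exactly $\ker \beta$ (where the kernel of a pointed-set map is the preimage of the basepoint).

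Next I would invoke exactness of the displayed sequence
\[
0 \longrightarrow H^2(X, A) \longrightarrow H^1(X, A \to \operatorname{Aut}(A)) \xrightarrow{\;\;\beta\;\;} H^1(X, \operatorname{Aut}(A))
\]
at the middle term, which identifies $\ker \beta$ with the image of the leftmost arrow. Exactness at $H^2(X, A)$ (that is, the initial $0$) says this arrow is injective, so it yields a bijection $H^2(X, A) \xrightarrow{\sim} \ker \beta$, proving the proposition.

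The only nontrivial input is the exact sequence itself, which was already produced just above the statement by applying crossed-module cohomology to \eqref{eq:diagramCrossedModules} in the abelian case (where $\operatorname{Inn}(A) = 1$). Given that, there is essentially no obstacle; the one point worth being careful about is that exactness of pointed-set sequences obtained from long exact sequences in crossed-module cohomology means ``kernel equals image'' in the pointed-set sense, which is exactly the statement needed. If desired, I would also note functoriality: the bijection is natural in $X$, and the trivial class $0 \in H^2(X, A)$ corresponds to the trivial $A$-banded gerbe $BA \times X$, so this is a classification in the usual sense.
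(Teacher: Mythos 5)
Your proof is correct and matches the argument the paper leaves implicit: the proposition is meant to be read off directly from the displayed exact sequence of pointed sets, with the $A$-banded condition translating to membership in $\ker\beta$. The paper itself gives no separate proof precisely because that is the whole content.
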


In the remainder we will only deal with $A$-banded gerbes, so we will abuse terminology and call them simply $A$-gerbes, or even just gerbes if the group $A$ is clear from the context.

\subsection{Torsors on gerbes}
\label{section:TorsorsOverGerbes}

\subsubsection{Presentations of gerbes.}
\label{section:PresentationsOfGerbes}

We briefly recall here a presentation of $A$-gerbes by gluings of the local pieces $BA \times U_i$. Much of the material and the notation in this section is borrowed from \cite{MR2399730}, which the reader is encouraged to consult for an extended exposition.

Given a class $\alpha \in H^2(X, A)$, denote by $_\alpha X$ the $A$-gerbe that it classifies\footnote{More precisely, take $_\alpha X$ to be any $A$-gerbe whose equivalence class is given by $\alpha$.}. With the choice of a cover $\mathfrak{U} = \{ U_i \to X \}_{i \in I}$ of $X$ and a representing cocycle $\underline{\alpha} = \{ \alpha_{ijk} \}_{i, j, k \in I} \in \check{Z}^2(\mathfrak{U}, A)$, we have compatible groupoid presentations of $X$,
\begin{equation} \label{eq:GroupoidPresentationX}
\bigg(
  \begin{tikzpicture}[x=100pt,baseline=(1.base)]
  \node (1) at (0,1) {$\mathfrak{R} := \bigsqcup_{i, j \in I} U_{ij}$};
  \node (2) at (1,1) {$U := \bigsqcup_{i \in I} U_i$};
  \path[semithick,->]
    ([yshift=4pt]1.east) edge node[pos=0.5,scale=0.75,descr,yshift=1pt] {$\boldsymbol{\mathfrak{s}}$} ([yshift=4pt]2.west)
    ([yshift=-4pt]1.east) edge node[pos=0.5,scale=0.75,descr] {$\boldsymbol{\mathfrak{t}}$} ([yshift=-4pt]2.west);
\end{tikzpicture},
\boldsymbol{\mathfrak{m}}, \boldsymbol{\mathfrak{i}}, \boldsymbol{\mathfrak{e}} \bigg),
\end{equation}
and of $_\alpha X$,
\begin{equation} \label{eq:GroupoidPresentationAlphaX}
\bigg(
  \begin{tikzpicture}[x=110pt,baseline=(1.base)]
  \node (1) at (0,1) {$R := \bigsqcup_{i, j \in I} U_{ij} \times A$};
  \node (2) at (1,1) {$U = \bigsqcup_{i \in I} U_i$};
  \path[semithick,->]
    ([yshift=4pt]1.east) edge node[pos=0.45,scale=0.75,descr,yshift=1pt] {$\boldsymbol{s}$} ([yshift=4pt]2.west)
    ([yshift=-4pt]1.east) edge node[pos=0.45,scale=0.75,descr] {$\boldsymbol{t}$} ([yshift=-4pt]2.west);
\end{tikzpicture},
\boldsymbol{m}, \boldsymbol{i}, \boldsymbol{e} \bigg).
\end{equation}
The maps in \eqref{eq:GroupoidPresentationX} are the obvious ones. Those in the presentation \eqref{eq:GroupoidPresentationAlphaX} of $_\alpha X$ are as follows.
\begin{itemize}
\item The source and target morphisms come from those of \eqref{eq:GroupoidPresentationX} and the canonical projection $\pi: R \to \mathfrak{R}$ as $\boldsymbol{s} = \boldsymbol{\mathfrak{s}} \circ \pi$ and $\boldsymbol{t} = \boldsymbol{\mathfrak{t}} \circ \pi$.
\item The identity and the inverse are given by
\[
\begin{tikzpicture}[x=80pt]
  \node (11) at (0,1) {$U$};
  \node (12) at (1,1) {$R$};
  \node (21) at (0,0) {$(x, i)$};
  \node (22) at (1,0) {$((x, i, i), 1_A)$};
  \path[semithick,->]
    (11) edge node[pos=0.5,scale=0.75,yshift=8pt] {$\boldsymbol{e}$} (12);
  \path[semithick,|->]
    (21) edge (22);
\end{tikzpicture}
\qquad\qquad
\begin{tikzpicture}[x=90pt]
  \node (11) at (0,1) {$R$};
  \node (12) at (1,1) {$R$};
  \node (21) at (0,0) {$((x,i,j), a)$};
  \node (22) at (1,0) {$((x,j,i), a^{-1})$};
  \path[semithick,->]
    (11) edge node[pos=0.5,scale=0.75,yshift=8pt] {$\boldsymbol{i}$} (12);
  \path[semithick,|->]
    (21) edge (22);
\end{tikzpicture}
\]
\item Finally, the multiplication $\boldsymbol{m}$ is the one encoding the class of the gerbe:
\[
\begin{tikzpicture}[x=150pt]
  \node (11) at (0,1) {$R \times_{\boldsymbol{t}, U, \boldsymbol{s}} R$};
  \node (12) at (1,1) {$R$};
  \node (21) at (0,0) {$((x, i, j), a) \times ((x, j, k), b)$};
  \node (22) at (1,0) {$((x, i, k), \alpha_{ijk}(x)ab)$};
  \path[semithick,->]
    (11) edge node[pos=0.5,scale=0.75,yshift=8pt] {$\boldsymbol{m}$} (12);
  \path[semithick,|->]
    (21) edge (22);
\end{tikzpicture}
\]
\end{itemize}
We denote by $\boldsymbol{p}_1$ and $\boldsymbol{p}_2$ the canonical projections $R \times_{\boldsymbol{t}, U, \boldsymbol{s}} R \rightrightarrows R$ onto the first and second factors, respectively.

\subsubsection{}

Given the groupoid presentation \eqref{eq:GroupoidPresentationAlphaX} of $_\alpha X$, descent theory tells us that an $H$-torsor on $_\alpha X$ is given by an $H$-torsor $P \to U$ together with an isomorphism $\boldsymbol{j}: \boldsymbol{t}^\ast P \longrightarrow \boldsymbol{s}^\ast P$ rendering commutative the following diagram:
\begin{equation} \label{eq:CocycleCondition}
\begin{gathered}
\begin{tikzpicture}[x=210pt,y=80pt]
  \node(1) at (0.3,1) {$\boldsymbol{p}_1^\ast \boldsymbol{s}^\ast P$};
  \node(2) at (0.7,1) {$\boldsymbol{p}_1^\ast \boldsymbol{t}^\ast P$};
  \node(3) at (1,0.5) {$\boldsymbol{p}_2^\ast \boldsymbol{s}^\ast P$};
  \node(4) at (0.7,0) {$\boldsymbol{p}_2^\ast \boldsymbol{t}^\ast P$};
  \node(5) at (0.3,0) {$\boldsymbol{m}^\ast \boldsymbol{t}^\ast P$};
  \node(6) at (0,0.5) {$\boldsymbol{m}^\ast \boldsymbol{s}^\ast P$};
  \draw[double distance=1.5pt] (2) -- (3);  
  \draw[double distance=1.5pt] (4) -- (5);  
  \draw[double distance=1.5pt] (6) -- (1);
  \path[semithick,->]
    (2) edge node[pos=0.45,scale=0.8,yshift=10pt] {$\boldsymbol{p}_1^\ast \boldsymbol{j}$} (1)
    (4) edge node[pos=0.3,scale=0.8,xshift=18pt] {$\boldsymbol{p}_2^\ast \boldsymbol{j}$} (3)
    (5) edge node[pos=0.3,scale=0.8,xshift=-22pt] {$\boldsymbol{m}^\ast \boldsymbol{j}$} (6);
\end{tikzpicture}
\end{gathered}
\end{equation}
The map $\boldsymbol{j}$ breaks up into a collection of isomorphisms of $H$-torsors:
\[
\Big\{ h_{ij} : \boldsymbol{t}^\ast P \big|_{U_{ij} \times A} \longrightarrow \boldsymbol{s}^\ast P \big|_{U_{ij} \times A} \Big\}_{i, j \in I}
\]
These induce isomorphisms $h_{ij}(x, a)$ of the fiber $H$ over each point $(x, a)$ of the base $U_{ij} \times A$ commuting with the tautological right action of $H$ on itself; that is to say, each $h_{ij}(x,a)$ acts as left multiplication by an element of $H$. In terms of these the cocycle condition \eqref{eq:CocycleCondition} results in the equation
\begin{equation} \label{eq:CocycleConditionGauge}
h_{ij}(x, a) h_{jk}(x, b) = h_{ik}(x, \alpha_{ijk}(x)ab)
\end{equation}
for $x \in U_{ijk}$.

Though the $H$-torsors $\boldsymbol{s}^\ast P = \pi^\ast \boldsymbol{\mathfrak{s}}^\ast P$ and $\boldsymbol{t}^\ast P = \pi^\ast \boldsymbol{\mathfrak{t}}^\ast P$ obviously descend to $\mathfrak{R}$, there is in general no hope for $\boldsymbol{j}$ to do so too unless we impose some additional constraint on it. Since we are performing descent along the $A$-torsor $\pi: R \to \mathfrak{R}$, it is clear that we need to equip $\boldsymbol{s}^\ast P$ and $\boldsymbol{t}^\ast P$ with $A$-equivariant structures ---lifts of the $A$-action on $R$ to the total spaces of these torsors--- and ask for $\boldsymbol{j}$ to be equivariant with respect to them. There are two natural choices for such a lift coming from the trivial and tautological maps from $A$ to $H$. Indeed, since $\boldsymbol{s}^\ast P$ and $\boldsymbol{t}^\ast P$ are $H$-torsors, they come equipped with a right action of $H$ preserving the fibers of their respective projections to $R$. Composing this action with the two maps above yields fiberwise actions of $A$ on $\boldsymbol{s}^\ast P$ and $\boldsymbol{t}^\ast P$, respectively. Collating these with the $A$-action on the base $R$ yields the required $A$-equivariant structures on these torsors.

\begin{definition} \label{definition:BasicHTorsorsOverGerbes}
We say that an $H$-torsor $(P, \boldsymbol{j})$ on $_\alpha X$ is \emph{basic} if $\boldsymbol{j}$ is $A$-equivariant for the trivial and tautological $A$-equivariant structures on $\boldsymbol{t}^\ast P$ and $\boldsymbol{s}^\ast P$, respectively.
\end{definition}

With this choice of $A$-equivariant structures, $\boldsymbol{j}$ is equivariant if
\[
h_{ij}(x, a)b = h_{ij}(x, ab).
\]
Defining
\[
\Big\{ h_{ij} : \boldsymbol{\mathfrak{t}}^\ast P \big|_{U_{ij}} \longrightarrow \boldsymbol{\mathfrak{s}}^\ast P \big|_{U_{ij}} \Big\}_{i, j \in I}
\]
by $h_{ij}(x) := h_{ij}(x, 1)$ (this abuse of notation should not cause any confusion), equation \eqref{eq:CocycleConditionGauge} is equivalent to $h_{ij}h_{jk} = \alpha_{ijk} h_{ik}$, which, in turn, implies ---and is implied by--- the conditions
\begin{equation}
\label{eq:TwistedHbundle}
h_{ii} = 1, \quad h_{ij} = h_{ji}^{-1}, \quad \text{ and } \quad h_{ij}h_{jk}h_{ki} = \alpha_{ijk}.
\end{equation}
The following statement is now clear.

\begin{proposition} \label{proposition:BasicHTorsorsOverGerbes}
The category of $\alpha$-twisted $H$-torsors on $X$ is equivalent to the category of basic\ $H$-torsors on the $A$-gerbe $_\alpha X$.
\end{proposition}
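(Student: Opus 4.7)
The plan is to build the equivalence explicitly by unpacking the descent data along the presentation \eqref{eq:GroupoidPresentationAlphaX} of $_\alpha X$. The preceding discussion has already carried out the essential calculation ---from \eqref{eq:CocycleCondition} to \eqref{eq:CocycleConditionGauge} to \eqref{eq:TwistedHbundle}--- so the proof amounts to (a) assembling these observations into functors in both directions, (b) treating morphisms in the analogous fashion, and (c) verifying that the two constructions are mutually quasi-inverse.

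First, I would define the functor from $\alpha$-twisted $H$-torsors on $X$ to basic $H$-torsors on $_\alpha X$. Given $\big( \underline{P} = \{P_i\}, \underline{h} = \{h_{ij}\} \big)$, set $P := \bigsqcup_{i\in I} P_i \to U$; this is manifestly an $H$-torsor on $U$. To produce the gluing $\boldsymbol{j}: \boldsymbol{t}^\ast P \to \boldsymbol{s}^\ast P$, note that on the component $U_{ij} \times A$ of $R$ the torsors $\boldsymbol{t}^\ast P$ and $\boldsymbol{s}^\ast P$ are pulled back from $P_j|_{U_{ij}}$ and $P_i|_{U_{ij}}$ respectively; define $\boldsymbol{j}$ on this component to be left multiplication by $h_{ij}(x)$, extended $A$-equivariantly with respect to the trivial $A$-action on $\boldsymbol{t}^\ast P$ and the tautological one on $\boldsymbol{s}^\ast P$. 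The $\underline{\alpha}$-twisted cocycle identity $h_{ij}h_{jk}h_{ki} = \alpha_{ijk}$ is designed exactly so that \eqref{eq:CocycleCondition} commutes under the multiplication $\boldsymbol{m}$, so $(P, \boldsymbol{j})$ is a well-defined $H$-torsor on $_\alpha X$; basicness is automatic by construction.

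In the reverse direction, given a basic $H$-torsor $(P, \boldsymbol{j})$ on $_\alpha X$, restrict $P$ to each component $U_i$ of $U$ to produce $H$-torsors $P_i$. Basicness and the discussion culminating in \eqref{eq:TwistedHbundle} allow us to extract isomorphisms $h_{ij}(x) := h_{ij}(x, 1)$ of $\boldsymbol{\mathfrak{t}}^\ast P|_{U_{ij}} \to \boldsymbol{\mathfrak{s}}^\ast P|_{U_{ij}}$ satisfying precisely the relations $h_{ii}=1$, $h_{ij} = h_{ji}^{-1}$, and $h_{ij}h_{jk}h_{ki} = \alpha_{ijk}$ required by Definition \ref{definition:BasicHTorsorOverAGerbeCech}. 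The two assignments are manifestly mutually inverse on objects, since the reconstruction $h_{ij} \mapsto \boldsymbol{j} \mapsto h_{ij}$ is tautological, and similarly for $(P,\boldsymbol{j})$.

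For morphisms, a morphism $(P, \boldsymbol{j}) \to (P', \boldsymbol{j}')$ of $H$-torsors on $_\alpha X$ is, by descent, a morphism $\varphi: P \to P'$ of $H$-torsors over $U$ intertwining the gluings, i.e.\ such that $\boldsymbol{s}^\ast \varphi \circ \boldsymbol{j} = \boldsymbol{j}' \circ \boldsymbol{t}^\ast \varphi$. Decomposing $\varphi = \{\varphi_i : P_i \to P'_i\}$ along $U = \bigsqcup_i U_i$ and unwinding this identity on each component $U_{ij} \times A$ of $R$ yields exactly $\varphi_i h_{ij} = h'_{ij} \varphi_j$, which is the morphism condition for $\alpha$-twisted $H$-torsors (the $A$-equivariance of $\varphi$ with respect to the tautological structures is automatic since $\varphi$ is $H$-equivariant and $A \subset H$). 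The only nontrivial point of the whole argument ---and it is routine--- is the careful bookkeeping required to check that the two natural $A$-equivariant structures on $\boldsymbol{s}^\ast P$ and $\boldsymbol{t}^\ast P$ reduce the general gauge cocycle condition \eqref{eq:CocycleConditionGauge} to the simpler form \eqref{eq:TwistedHbundle}, which the preceding paragraphs have already done.
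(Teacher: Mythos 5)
Your proof is correct and takes essentially the same approach as the paper: the paper carries out the very calculation you describe in the paragraphs preceding the proposition (descent data along \eqref{eq:GroupoidPresentationAlphaX}, the reduction of \eqref{eq:CocycleConditionGauge} to \eqref{eq:TwistedHbundle} via the basicness/$A$-equivariance condition) and then simply states that the proposition is "now clear." What you add is the explicit packaging of these observations into mutually quasi-inverse functors and the corresponding check on morphisms, which the paper leaves implicit.
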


\subsubsection{Obstruction gerbes.}
\label{section:ObstructionGerbes}

Given a $K$-torsor $Q \to X$ we can ask whether there exists an $H$-torsor giving rise to it through the obvious map in the long exact sequence of cohomology
\begin{equation}
\label{eq:ObstructionAGerbe}
H^1(X, A) \longrightarrow H^1(X, H) \longrightarrow H^1(X, K) \xrightarrow{\;\;ob_A\;\;} H^2(X, A).
\end{equation}
It is clear that a necessary and sufficient condition is for the $A$-gerbe classified by $ob_A([Q \to X])$ to be trivial; i.e., globally equivalent to $BA \times X$. We call the latter the \emph{obstruction $A$-gerbe} of the $K$-torsor $Q \to X$. If it is not trivial, we can only lift $Q \to X$ to an $ob([Q \to X])$-twisted $H$-torsor on $X$ ---this is the reason why some authors prefer the term \emph{lifting gerbe}.

To show this, choose a cover $\mathfrak{U} = \{ U_i \to X \}_{i \in I}$ of $X$ that trivializes $Q \to X$, so that the $K$-torsor is specified solely by the transition functions $\underline{k} = \{ k_{ij} \}_{i, j \in I} \in \check{Z}^1(\mathfrak{U}, K)$. Over each $U_i$, our $K$-torsor is trivial and hence liftable to the trivial $H$-torsor on $U_i$. Over each double intersection $U_{ij}$ we can lift the transition function $k_{ij}$ to an element $h_{ij} \in \Gamma(U_{ij}, H)$. But in general we cannot make $\underline{h} = \{ h_{ij} \} \in \check{C}^1(\mathfrak{U}, H)$ into a 1-cocycle; rather,
\[
\underline{a} = \Big\{ a_{ijk} := h_{ij}h_{jk}h_{ki} \Big\}_{i, j, k \in I} \in \check{Z}^2(\mathfrak{U}, A)
\]
provides a representative for the class of the obstruction gerbe of $Q \to X$. Different choices of lifts $h_{ij}$ produce different representatives for the class $[\underline{a}] \in \check{H}^2(\mathfrak{U}, A)$.

On the other hand, notice (cf. \S\ref{section:RemarksAboutTorsionAndProjectivization}) that an $\alpha$-twisted $H$-torsor $S \to X$ induces an untwisted $K$-torsor on $X$ by change of fiber: namely, $S \times_H K \to X$.

\subsection{Gerbes as principal \texorpdfstring{$\infty$}{infinity}-bundles}
\label{section:GerbesAsPrincipalInftyBundles}

\subsubsection{}

In this section, we up the homotopical ante and place ourselves squarely in the world of $\infty$-topoi (in essence, $\infty$-categories of $\infty$-stacks on some Grothendieck site). The reader unfamiliar with these objects is strongly encouraged to peruse the literature before continuing. For $\infty$-categories and $\infty$-topoi, the canonical reference has come to be \cite{MR2522659} (the point of view of \cite{MR2137288,MR2394633} might be more appealing to algebraic geometers, but we stick with Lurie's terminology). Those wanting an introduction to $\infty$-categories instead of an encyclopedic treatise can read \cite{arXiv1007.2925}. A short account, particularly tailored to our purposes, of what an $\infty$-topos is can be found in \cite{arXiv1207.0248}, where the notion of a principal $\infty$-bundle first appeared.

One of the most important insights of \cite{arXiv1207.0248} is that the classical notion of principal bundle find its most natural home in the context of $\infty$-topoi. The classical definition requires not only an action of the structure group on the total space of the bundle, but also the freeness of said action and a local triviality condition. The authors of \emph{loc.cit.} explain how this freeness condition is an artifice: it is only necessary if we insist on the base of the bundle being a space ---that is, a $0$-truncated object of the appropriate $\infty$-topos. The local triviality, on the other hand, is tautological when understood in a generalized sense that is natural from the point of view of $\infty$-topoi ---the magic words here being \emph{effective epimorphism}.

But once we are willing to accept stacks as bases for these bundles, we should also admit them as total spaces, and even as structure groups.

\begin{definition}[{\cite[Definition 3.4]{arXiv1207.0248}}]
Let $\mathbf{H}$ be an $\infty$-topos, $G$ an $\infty$-group object in it, and $X \in \mathbf{H}$. A \emph{$G$-principal $\infty$-bundle over $X$} is an object $(P \to X) \in \mathbf{H}_{/X}$ equipped with a $G$-action such that $P \to X$ exhibits the quotient $X \simeq P/\!\!/G$.
\end{definition}

The $\infty$-category of $G$-principal $\infty$-bundles over $X$ is, in fact, an $\infty$-groupoid, denoted $G\mathrm{Bund}(X)$. The following result states that the \emph{delooping} $BG$ of $G$ classifies these objects.

\begin{theorem}[{\cite[Theorem 3.19]{arXiv1207.0248}}]
\label{theorem:ClassificationPrincipalInftyBundles}
For all $X \in \mathbf{H}$ and every $\infty$-group $G$, there is a natural equivalence of $\infty$-groupoids
\[
G\mathrm{Bund}(X) \simeq \mathbf{Map}_\mathbf{H}(X, BG)
\]
which, on vertices, maps a bundle $P \to X$ to a morphism $X \to BG$ (its \emph{classifying morphism}, denoted $[P \to X]$) for which $P \to X \to BG$ is a fiber sequence.
\end{theorem}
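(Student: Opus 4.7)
The strategy is to construct the equivalence in both directions and then use the Giraud--Lurie axioms characterizing $\infty$-topoi---specifically, the fact that every groupoid object is effective---to see that the two constructions are mutually inverse.

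First I would produce the functor $\mathbf{Map}_\mathbf{H}(X, BG) \to G\mathrm{Bund}(X)$. By definition $BG = */\!\!/G$, so the canonical map $* \to BG$ is an effective epimorphism whose Čech nerve is the simplicial object $G^{\bullet+1} \to *$ carrying the translation $G$-action; in particular, $* \to BG$ exhibits the universal $G$-principal $\infty$-bundle. Given $f: X \to BG$, form the pullback
\[
\begin{tikzpicture}[x=80pt,y=50pt,baseline=(11.base)]
  \node (11) at (0,1) {$P$};
  \node (12) at (1,1) {$*$};
  \node (21) at (0,0) {$X$};
  \node (22) at (1,0) {$BG$};
  \path[semithick,->]
    (11) edge (12)
    (11) edge (21)
    (21) edge node[scale=0.8,yshift=-8pt] {$f$} (22)
    (12) edge (22);
\end{tikzpicture}
\]
Since effective epimorphisms and $G$-actions are stable under pullback in any $\infty$-topos, $P \to X$ inherits a $G$-action together with an effective epimorphism structure, and I need to check that it exhibits $X$ as the quotient $P/\!\!/G$. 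This follows because the Čech nerve of $P \to X$ is obtained by pulling back the Čech nerve of $* \to BG$, i.e., it is the action groupoid of $G$ on $P$, and $X$ is then its colimit because colimits in $\mathbf{H}$ are universal.

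In the other direction, given a $G$-principal $\infty$-bundle $P \to X$, the definition says $X \simeq P/\!\!/G$, i.e., $X$ is the colimit of the action groupoid $P \times G^{\bullet} \rightrightarrows \cdots$. There is an evident map of simplicial objects from this groupoid to $* \times G^{\bullet} \rightrightarrows \cdots$ (collapsing $P$ to $*$), equivariant with respect to the obvious maps. Passing to colimits produces a morphism $X \simeq P/\!\!/G \longrightarrow */\!\!/G = BG$, which I take as the classifying morphism $[P \to X]$. Functoriality in $P$ (and naturality in $X$) is immediate from the functoriality of the colimit.

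The final step is to verify that these two constructions are mutually inverse. In one direction, starting from $f: X \to BG$, forming $P = X \times_{BG} *$, and then taking the colimit of the resulting action groupoid recovers $f$, because the pullback square above is itself the outcome of pulling back the universal effective epimorphism $* \to BG$, and the quotient of the pulled-back groupoid is $X$ equipped with the original map $f$. In the other direction, starting from $P \to X$ and forming the classifying map $X \to BG$, the pullback of $* \to BG$ along this map is computed by pulling back the Čech nerve; by universality of colimits one recovers $P$ with its $G$-action. The main obstacle---and the place where the $\infty$-topos hypothesis is really used---is the identification of $P$ with the pullback in this last step: it relies on the effectivity of the groupoid object $P \times G^\bullet \rightrightarrows P$, which is one of the Giraud--Lurie axioms for $\infty$-topoi and is exactly what fails for $0$-truncated topoi (and is what forces the classical definition to impose freeness of the action and local triviality by hand). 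Once effectivity is in hand, the round-trip equivalences are natural in $X$ and assemble into the asserted equivalence of $\infty$-groupoids.
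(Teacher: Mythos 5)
Theorem~\ref{theorem:ClassificationPrincipalInftyBundles} is not proved in this paper; it is imported verbatim from \cite[Theorem~3.19]{arXiv1207.0248}, so there is no in-paper proof to compare against. Your sketch correctly reconstructs the argument from that source: the two functors (pullback of the universal bundle $\ast \to BG$ along a classifying map, and quotienting a bundle by its $G$-action) and the invocation of the Giraud--Lurie axioms (effectivity of groupoid objects plus universality of colimits) to close the round trip are exactly the moving parts of the original proof.

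One small indexing slip worth flagging: the \v{C}ech nerve of $\ast \to BG$ has $n$-th level $\ast \times_{BG} \cdots \times_{BG} \ast$ with $n+1$ factors of $\ast$, hence $G^n$ (the nerve of the one-object groupoid with morphisms $G$, since $\ast \times_{BG} \ast \simeq \Omega BG \simeq G$), not $G^{n+1}$. The simplicial object you describe, $G^{\bullet+1}$ with the translation $G$-action, is instead the standard presentation of $EG \simeq \ast$ mapping down to $BG$, not the \v{C}ech nerve itself. This does not affect the validity of the rest of the argument.
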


Although they do not make it explicit, the authors of \cite{arXiv1207.0248} do define the $\infty$-category $G\mathrm{Bund}$ of $G$-principal $\infty$-bundles over arbitrary bases. Morphisms between $G$-principal $\infty$-bundles over different bases are defined in the obvious way: if $f \in \mathbf{Map}_\mathbf{H}(X,Y)$, and $P \to X$ and $Q \to Y$ are $G$-principal $\infty$-bundles over $X$ and $Y$, respectively, we can pull back $Q \to Y$ to $X$ by $f$, and consider morphisms over $X$ between $P \to X$ and $Q \times^h_{Y,f}X \to X$.

\begin{corollary}
For every $\infty$-group $G$, there is a natural equivalence of $\infty$-categories
\[
G\mathrm{Bund} \simeq \mathbf{H}_{/BG}
\]
\end{corollary}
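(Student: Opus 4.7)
The plan is to upgrade the pointwise classification of Theorem \ref{theorem:ClassificationPrincipalInftyBundles} to a global equivalence. First I would construct a functor
\[
\Phi : G\mathrm{Bund} \longrightarrow \mathbf{H}_{/BG}
\]
sending a $G$-principal $\infty$-bundle $(P \to X)$ to its classifying morphism $[P \to X] : X \to BG$, and sending a morphism of $G$-principal bundles covering $f: X \to Y$ to the induced morphism in the slice given by the canonical homotopy $[P \to X] \simeq [Q \to Y] \circ f$ supplied by the pullback presentation $P \simeq f^\ast Q$. Functoriality follows from the universal property of the fiber sequences in Theorem \ref{theorem:ClassificationPrincipalInftyBundles}.

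For essential surjectivity, any object $f \in \mathbf{H}_{/BG}$ is a morphism $f: X \to BG$. Form the homotopy pullback $P := X \times^h_{BG} \ast$ along the basepoint $\ast \to BG$. Then $P \to X$ inherits a $G$-action from the canonical $G$-action on $\ast \to BG$ (exhibiting $BG$ as the quotient $\ast /\!\!/ G$), and the pullback square is a fiber sequence $P \to X \to BG$; by Theorem \ref{theorem:ClassificationPrincipalInftyBundles} this identifies $P \to X$ as a $G$-principal $\infty$-bundle with classifying morphism $f$, so $\Phi(P \to X) \simeq f$.

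For full faithfulness, fix $(P \to X), (Q \to Y) \in G\mathrm{Bund}$ and consider the forgetful maps
\[
\mathrm{Map}_{G\mathrm{Bund}}\bigl((P\to X),(Q\to Y)\bigr) \longrightarrow \mathbf{Map}_\mathbf{H}(X,Y) \longleftarrow \mathrm{Map}_{\mathbf{H}_{/BG}}\bigl(f,g\bigr),
\]
where $f = [P \to X]$ and $g = [Q \to Y]$. It suffices to show $\Phi$ induces an equivalence on the fibers over each $\phi: X \to Y$. On the slice side, this fiber is the space of homotopies $f \simeq g \circ \phi$ in $\mathbf{Map}_\mathbf{H}(X, BG)$. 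On the bundle side, the fiber is the space of $G$-equivariant morphisms of $G$-principal bundles over $X$ from $P$ to $\phi^\ast Q$; applying Theorem \ref{theorem:ClassificationPrincipalInftyBundles} to the base $X$ identifies this with the space of paths from $[P \to X] = f$ to $[\phi^\ast Q \to X] = g \circ \phi$ in $\mathbf{Map}_\mathbf{H}(X, BG)$. These two descriptions coincide, and by construction $\Phi$ induces the identification.

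The main obstacle is the bookkeeping required to make the fiberwise argument honest inside an $\infty$-categorical setting, where ``fiber over $\phi$'' must be interpreted as a homotopy fiber of mapping spaces. A clean way to bypass this is to observe that $G\mathrm{Bund} \to \mathbf{H}$ (given by $(P \to X) \mapsto X$) is a right fibration whose fiber over $X$ is the $\infty$-groupoid $G\mathrm{Bund}(X)$, and similarly $\mathbf{H}_{/BG} \to \mathbf{H}$ is a right fibration with fiber $\mathbf{Map}_\mathbf{H}(X, BG)$; Theorem \ref{theorem:ClassificationPrincipalInftyBundles} provides a natural equivalence between the straightening functors $\mathbf{H}^\mathrm{op} \to \mathcal{S}$ that these two right fibrations classify, and straightening-unstraightening then yields the desired equivalence of $\infty$-categories over $\mathbf{H}$. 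Naturality of the equivalence in Theorem \ref{theorem:ClassificationPrincipalInftyBundles} in the base $X$ is precisely what makes this argument go through, and is the only nontrivial input.
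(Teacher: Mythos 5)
Your proof is correct. The paper states this corollary without an explicit proof — it is presented as an immediate consequence of the preceding discussion, in which $G\mathrm{Bund}$ is defined precisely as the (unstraightening of the) contravariant functor $X \mapsto G\mathrm{Bund}(X)$ on $\mathbf{H}$, together with the naturality clause in Theorem \ref{theorem:ClassificationPrincipalInftyBundles}. Your final paragraph — viewing both $G\mathrm{Bund}\to\mathbf{H}$ and $\mathbf{H}_{/BG}\to\mathbf{H}$ as right fibrations and invoking straightening/unstraightening against the natural equivalence $G\mathrm{Bund}(-)\simeq\mathbf{Map}_{\mathbf{H}}(-,BG)$ — is exactly the implicit argument, and is where the real content sits. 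The earlier fiberwise discussion of essential surjectivity and full faithfulness is a more hands-on unwinding of the same fact (the full faithfulness part is, in essence, the content of \cite[Lemma~5.5.5.12]{MR2522659}, which the paper applies immediately after the corollary to obtain Lemma \ref{lemma:MorphismsBetweenPrincipalInftyBundles}); it is correct but, as you yourself note, requires homotopy-coherent bookkeeping that the straightening argument sidesteps. You could drop those first two paragraphs entirely and the proof would still stand.
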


The description of mapping spaces in overcategories of \cite[Lemma~5.5.5.12]{MR2522659} then yields the following characterization of the mapping spaces in $G\mathrm{Bund}$.

\begin{lemma}
\label{lemma:MorphismsBetweenPrincipalInftyBundles}
For $G$ an $\infty$-group, and any two $G$-principal $\infty$-bundles $P \to X$ and $Q \to Y$, there are natural equivalences of $\infty$-groupoids
\begin{align*}
\Map_{G\mathrm{Bund}}\!\big(P\to X, Q \to Y\big)
&\simeq \Map_{\mathbf{H}_{/BG}}\!\big([P \to X], [Q \to Y]\big) \\
&\simeq \lim \left\{
\begin{tikzpicture}[x=150pt,y=35pt,baseline={([yshift=-5pt]current bounding box.west)}]
  \node (12) at (1,1) {$\ast$};
  \node (21) at (0,0) {$\Map_\mathbf{H}\!\big(X, Y\big)$};
  \node (22) at (1,0) {$\Map_\mathbf{H}\!\big(X, BG\big)$};
  \path[semithick,->]
    (12) edge[shorten <=-2pt] node[pos=0.4,scale=0.75,xshift=26pt] {$[P \to X]$} (22)
    (21) edge node[pos=0.5,scale=0.75,yshift=10pt] {$[Q \to Y] \circ -$} (22);
\end{tikzpicture}
\right\}
\end{align*}
\end{lemma}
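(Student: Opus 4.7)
The plan is a two-step unpacking that uses the corollary and theorem established immediately above. For the first equivalence, I would simply invoke the corollary preceding the lemma, which identifies the $\infty$-category $G\mathrm{Bund}$ with the slice $\mathbf{H}_{/BG}$ via the assignment sending a principal $\infty$-bundle $P \to X$ to its classifying morphism $[P \to X]\colon X \to BG$. Since equivalences of $\infty$-categories induce equivalences on mapping spaces, this yields
\[
\Map_{G\mathrm{Bund}}\!\big(P\to X,\, Q \to Y\big) \simeq \Map_{\mathbf{H}_{/BG}}\!\big([P \to X],\, [Q \to Y]\big).
\]

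For the second equivalence, I would apply \cite[Lemma~5.5.5.12]{MR2522659} (already cited in the paragraph introducing the present lemma), which computes mapping spaces in an overcategory as a homotopy pullback. Applied to $\mathbf{H}$ with the objects $[P \to X]$ and $[Q \to Y]$ of $\mathbf{H}_{/BG}$, it identifies $\Map_{\mathbf{H}_{/BG}}\!\big([P \to X], [Q \to Y]\big)$ with the homotopy fiber of the post-composition map
\[
\Map_\mathbf{H}(X, Y) \xrightarrow{\;[Q \to Y] \circ -\;} \Map_\mathbf{H}(X, BG)
\]
over the point $[P \to X] \in \Map_\mathbf{H}(X, BG)$; this is precisely the limit diagram displayed in the statement.

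No serious obstacle arises: both equivalences are formal consequences of results that have been cited above, and the lemma is essentially an explicit reformulation of them. The only mild subtlety worth flagging is pullback-stability of classifying morphisms: a map $f \in \Map_\mathbf{H}(X,Y)$ together with a homotopy $[Q \to Y] \circ f \simeq [P \to X]$ corresponds, via Theorem \ref{theorem:ClassificationPrincipalInftyBundles} applied both to the original bundles and to the pulled-back bundle $Q \times^h_{Y, f} X \to X$, to a morphism of $G$-principal $\infty$-bundles $P \to Q \times^h_{Y,f} X$ over $X$, recovering the definition of morphisms in $G\mathrm{Bund}$ over varying bases. This coherence is built into Theorem \ref{theorem:ClassificationPrincipalInftyBundles} and requires no further argument.
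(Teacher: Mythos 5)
Your proposal matches the paper's argument exactly: the first equivalence is the Corollary identifying $G\mathrm{Bund}$ with $\mathbf{H}_{/BG}$, and the second is the application of \cite[Lemma~5.5.5.12]{MR2522659} on mapping spaces in overcategories, which the paper invokes in the sentence immediately preceding the lemma. Your closing remark on pullback-stability is a helpful sanity check but not a new ingredient.
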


To simplify notation, we write $\mathbf{Map}(-,-)$ for the mapping space $\mathbf{Map}_\mathbf{H}(-,-)$ whenever the $\infty$-topos $\mathbf{H}$ is clear from the context. We also denote a $G$-principal $\infty$-bundle by its total space whenever no confusion could arise about what the $G$-action is, and use $\mathbf{Map}_G(-,-)$ for the mapping space $\mathbf{Map}_{G\mathrm{Bund}}(-,-)$. In these terms, the conclusion of Lemma \ref{lemma:MorphismsBetweenPrincipalInftyBundles} reads
\[
\Map_G\!\big(P, Q\big) \simeq \lim \left\{
\begin{tikzpicture}[x=120pt,y=35pt,baseline={([yshift=-5pt]current bounding box.west)}]
  \node (12) at (1,1) {$\ast$};
  \node (21) at (0,0) {$\Map\!\big(X, Y\big)$};
  \node (22) at (1,0) {$\Map\!\big(X, BG\big)$};
  \path[semithick,->]
    (12) edge[shorten <=-2pt] node[pos=0.4,scale=0.75,xshift=12pt] {$[P]$} (22)
    (21) edge node[pos=0.5,scale=0.75,yshift=10pt] {$[Q] \circ -$} (22);
\end{tikzpicture}
\right\}
\]
This last bit of notation is meant to evoque the classical case, in direct analogy of which we refer to the elements of $\Map_G\!\big(P, Q\big)$ as \emph{$G$-equivariant} morphisms between $P$ and $Q$.

\subsubsection{}

In the case that occupies us, an abelian algebraic group $A$ gives rise (by delooping) to an $\infty$-group object both in the $\infty$-topos of complex-analytic $\infty$-stacks, and in that of {\' e}tale $\infty$-stacks\footnote{At this point, any sensible notion of the $\infty$-topos of complex-analytic (resp., {\'e}tale) $\infty$-stacks can be used. In \S\ref{section:1LocalicInftyTopoi} we will make a particular choice.}: the classifying stack $BA$ of $A$-torsors ---the group operation being given by convolution of $A$-torsors.
\begin{definition}
\label{definition:AGerbesAsPrincipalInftyBundles}
An \emph{$A$-gerbe over a stack $\mathfrak{X}$} is a $BA$-principal $\infty$-bundle over $\mathfrak{X}$.
\end{definition}

This definition generalizes Definition \ref{definition:bandedGerbes}: it purges it from the choice of a cover of the base, and thus it works over nongeometric bases too. By Theorem \ref{theorem:ClassificationPrincipalInftyBundles}, the $\infty$-groupoid of such objects is given by the mapping space $\Map\!\big(\mathfrak{X}, B^2A\big)$. The latter is a 2-category, with equivalence classes given by $\pi_0 \Map\!\big(\mathfrak{X}, B^2A\big) \cong H^2(\mathfrak{X}, A)$, in accordance with Proposition \ref{proposition:ClassificationBandedAGerbes}. Following the notation of \S\ref{section:PresentationsOfGerbes}, we denote by $_\alpha \mathfrak{X}$ the (any) $A$-gerbe over $\mathfrak{X}$ classified by $\alpha \in \Map\!\big(\mathfrak{X}, B^2A\big)_0$\footnote{Sometimes we will abuse language by conflating the two, saying that $\alpha$ itself is an $A$-gerbe.}.

\subsubsection{}
\label{section:BasicTorsorsOverGerbes}

Consider now the exact sequence of sheaves $1 \to A \to H \to K \to 1$. It gives rise to a long fiber sequence of stacks ---the \emph{Puppe sequence}---,
\[
1 \to A \to H \to K \to BA \to BH \to BK \xrightarrow{\;ob_A\;} B^2A,
\]
that exhibits $BH$, the classifying stack of $H$-torsors, as an $A$-gerbe over $BK$ ---and induces \eqref{eq:ObstructionAGerbe} after passage to cohomology.

\begin{definition}
[(cf. Definition \ref{definition:BasicHTorsorsOverGerbes})]
\label{definition:BasicHTorsorOverGerbe}
\hspace{-5pt}\footnote{A definition in the same spirit appeared in \cite{MR2966944}.}
A basic\ $H$-torsor on an $A$-gerbe ${}_{\alpha}\mathfrak{X}$ is an $H$-torsor on ${}_{\alpha}\mathfrak{X}$ whose classifying morphism is $BA$-equivariant. The category of such objects is the mapping space
\[
\Map_{BA}\!\big( {}_{\alpha}\mathfrak{X}, BH \big)
\]
\end{definition}

A direct application of Lemma \ref{lemma:MorphismsBetweenPrincipalInftyBundles} provides a presentation of this (1-)category as a limit:
\begin{equation} \label{eq:BasicHTorsors}
\Map_{BA}\!\big( {}_{\alpha}\mathfrak{X}, BH \big) \simeq \lim \left\{
\begin{tikzpicture}[x=110pt,y=35pt,baseline={([yshift=-5pt]current bounding box.west)}]
  \node (12) at (1,1) {$\ast$};
  \node (21) at (0,0) {$\Map\!\big(\mathfrak{X}, BK\big)$};
  \node (22) at (1,0) {$\Map\!\big(\mathfrak{X}, B^2 A\big)$};
  \path[semithick,->]
    (12) edge[shorten <=-2pt] node[pos=0.4,scale=0.75,xshift=8pt] {$\alpha$} (22)
    (21) edge node[pos=0.5,scale=0.75,yshift=8pt] {$ob_A$} (22);
\end{tikzpicture}
\right\}
\end{equation}
In words: a basic $H$-torsor on $_\alpha \mathfrak{X}$ is given by a $K$-torsor on $\mathfrak{X}$ together with an equivalence between the obstruction $A$-gerbe of the latter and $_\alpha \mathfrak{X}$ (cf. \S\ref{section:RemarksAboutTorsionAndProjectivization}).

A remark is in order: in higher homotopical situations, equivariance is usually extra structure. In the case of Definition \ref{definition:BasicHTorsorOverGerbe}, however, it is truly just a condition, since both $BA$ and $BH$ are 1-truncated $\infty$ stacks ---in agreement with Definition \ref{definition:BasicHTorsorsOverGerbes}.

\section{Hodge theory}
\label{section:HodgeTheory}

\subsection{1-localic \texorpdfstring{$\infty$}{infinity}-topoi and their hypercompletions}
\label{section:1LocalicInftyTopoi}

\subsubsection{}

Let $(\mathcal{C}, \tau)$ be a Grothendieck site admitting finite limits. There are two closely related variants of the $\infty$-topos of $\infty$-stacks over it:
\begin{itemize}
\item We denote by $\mathsf{St}(\mathcal{C}, \tau)^\wedge$ the hypercomplete $\infty$-topos presented by both the local injective \cite{MR906403} and local projective \cite{MR1876801,MR1870515} model structures on the category $[\mathcal{C}^\mathrm{op}, \mathsf{sSet}]$ of simplicial presheaves on $\mathcal{C}$. These can also be realized as left Bousfield localizations at the collection of all hypercovers of the global injective and global projective model structures, respectively, on $[\mathcal{C}^\mathrm{op}, \mathsf{sSet}]$ \cite{MR2034012}.
\item Localizing any of the global model structures at the smaller class of {\v C}ech hypercovers yields the so-called \emph{1-localic} $\infty$-topos of $\infty$-stacks on $(\mathcal{C}, \tau)$ \cite[Definition 6.4.5.8]{MR2522659}, which we simply denote by $\mathsf{St}(\mathcal{C}, \tau)$.
\end{itemize}

From their descriptions as left Bousfield localizations it should come as no surprise that there is a geometric morphism of $\infty$-topoi,
\[
\begin{tikzpicture}[x=80pt]
  \node (1) at (0,0.5) {$\mathsf{St}(\mathcal{C}, \tau)^\wedge$};
  \node (2) at (1,0.5) {$\mathsf{St}(\mathcal{C}, \tau)$};
  \path[semithick,{Hooks[right]}->]
    ([yshift=-4pt]1.east) edge ([yshift=-4pt]2.west);
  \path[semithick,->]
    ([yshift=4pt]2.west) edge ([yshift=4pt]1.east);
\end{tikzpicture},
\]
in which the right adjoint is fully faithful. This exhibits $\mathsf{St}(\mathcal{C}, \tau)^\wedge$ as a reflective sub-$\infty$-category of $\mathsf{St}(\mathcal{C}, \tau)$, and is referred to as its \emph{hypercompletion} (see the discussion above \cite[Lemma 6.5.2.9]{MR2522659}). Its objects ---which are then said to be \emph{hypercomplete}--- can be described as those $\infty$-stacks that satisfy descent with respect to all hypercovers, as opposed to only with respect to {\v C}ech hypercovers. \cite[Theorem 6.5.3.12]{MR2522659}. This difference between descent and hyperdescent disappears if we only work with truncated objects ---that is, $\infty$-stacks whose homotopy sheaves vanish above some finite level---; indeed, these are always hypercomplete \cite[Lemma 6.5.2.9]{MR2522659}. Notice too that this implies that the 1-topoi of 0-truncated objects of these two $\infty$-topoi coincide: they can be realized as (the nerve of) the classical 1-topos of sheaves of sets on $(\mathcal{C}, \tau)$:
\[
\tau_{\leq 0} \mathsf{St}(\mathcal{C}, \tau)^\wedge \simeq \tau_{\leq 0}\mathsf{St}(\mathcal{C}, \tau) \simeq \mathsf{Sh}(\mathcal{C}, \tau).
\]

\subsubsection{}

The key technical advantage of the 1-localic version over its hypercompletion lies in the fact that geometric morphisms are determined by their 0-truncations. 
The concrete statement, letting $\mathrm{Fun}_\ast(\mathscr{X}, \mathscr{Y})$ denote the $\infty$-category of geometric morphisms between two $\infty$-topoi $\mathscr{X}$ and $\mathscr{Y}$ (with the right adjoint mapping $\mathscr{X}$ to $\mathscr{Y}$), is the following.

\begin{proposition}[{\cite[Lemma~6.4.5.6]{MR2522659}}] \label{proposition:LiftingGeometricMorphisms}
For any $\infty$-topos $\mathscr{X}$, restriction induces an equivalence
\[
\mathrm{Fun}_\ast \!\left( \mathscr{X}, \mathsf{St}(\mathcal{C}, \tau) \right) \longrightarrow \mathrm{Fun}_\ast \!\left( \tau_{\leq 0}\mathscr{X}, \tau_{\leq 0}\mathsf{St}(\mathcal{C}, \tau) \right).
\]
\end{proposition}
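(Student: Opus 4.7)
The plan is to deduce the equivalence from the universal mapping property of $\mathsf{St}(\mathcal{C},\tau)$ as the free $\infty$-topos on the site $(\mathcal{C},\tau)$, exploiting the fact that $\mathcal{C}$ is a 1-category and therefore controls only 0-truncated data.

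First I would invoke the characterization of $\mathsf{St}(\mathcal{C},\tau)$ by its universal property: precomposition with the Yoneda embedding $y\colon\mathcal{C}\hookrightarrow\mathsf{St}(\mathcal{C},\tau)$ yields an equivalence
\[
\mathrm{Fun}_\ast\!\left(\mathscr{X},\mathsf{St}(\mathcal{C},\tau)\right)\;\simeq\;\mathrm{Fun}^{\mathrm{lex},\tau}(\mathcal{C},\mathscr{X}),
\]
where the right-hand side denotes the $\infty$-category of left-exact, $\tau$-continuous functors from $\mathcal{C}$ into the underlying $\infty$-category of $\mathscr{X}$. This is the content of \cite[Prop.~6.2.3.20]{MR2522659}: the localization of simplicial presheaves at \emph{{\v C}ech} hypercovers (as opposed to \emph{all} hypercovers) is engineered precisely to enforce $\tau$-continuity on representables without imposing hyperdescent.

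Next I would show that every such functor automatically factors through the 0-truncated part of $\mathscr{X}$. Each representable $y(c)$ is a 0-truncated object of $\mathsf{St}(\mathcal{C},\tau)$ because $\mathcal{C}$ is an ordinary 1-category. The pullback functor $f^\ast$ in any geometric morphism preserves finite limits, and the property of being $n$-truncated is cut out by finite-limit conditions on the diagonal; hence $f^\ast$ preserves $n$-truncatedness, and any left-exact, $\tau$-continuous $F\colon\mathcal{C}\to\mathscr{X}$ factors through $\tau_{\leq 0}\mathscr{X}$. Conversely, the inclusion $\tau_{\leq 0}\mathscr{X}\hookrightarrow\mathscr{X}$ is itself left-exact and preserves covers, so postcomposition provides an inverse, yielding
\[
\mathrm{Fun}^{\mathrm{lex},\tau}(\mathcal{C},\mathscr{X})\;\simeq\;\mathrm{Fun}^{\mathrm{lex},\tau}(\mathcal{C},\tau_{\leq 0}\mathscr{X}).
\]
The classical universal property of the ordinary sheaf topos (SGA 4) identifies the right-hand side with $\mathrm{Fun}_\ast(\tau_{\leq 0}\mathscr{X},\mathsf{Sh}(\mathcal{C},\tau))$, and since $\tau_{\leq 0}\mathsf{St}(\mathcal{C},\tau)\simeq\mathsf{Sh}(\mathcal{C},\tau)$ (as already recorded in the excerpt), this is precisely the target of the restriction functor.

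Chaining these three equivalences produces an equivalence of $\infty$-categories which, by construction, agrees with restriction along $\tau_{\leq 0}$. The main obstacle is verifying the compatibility at the level of mapping spaces: each intermediate equivalence must be natural in both variables so that they assemble into an equivalence of $\infty$-categories rather than merely of homotopy categories. This naturality is controlled by the strong universal properties of left Bousfield localization and of the Yoneda embedding, both enhanced to mapping-space statements in Lurie's framework; careful bookkeeping through \cite[\S\S 6.2--6.4]{MR2522659} is what makes the argument go through.
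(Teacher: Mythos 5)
Your sketch correctly reproduces the standard argument underlying Lurie's Lemma~6.4.5.6, which the paper invokes without further proof: the universal property of the sheaf $\infty$-topos, the fact that left-exact functors preserve $n$-truncated objects (HTT~5.5.6.16, applied to the objects of the 1-site $\mathcal{C}$ to force factorization through $\tau_{\leq 0}\mathscr{X}$), and the classical SGA~4 universal property of the 1-topos $\mathsf{Sh}(\mathcal{C},\tau) \simeq \tau_{\leq 0}\mathsf{St}(\mathcal{C},\tau)$. Two points you gesture at but should pin down: the step where the inclusion $\tau_{\leq 0}\mathscr{X}\hookrightarrow\mathscr{X}$ ``preserves covers'' rests on the nontrivial fact that a morphism between $0$-truncated objects is an effective epimorphism in $\mathscr{X}$ if and only if it is one in the 1-topos $\tau_{\leq 0}\mathscr{X}$ (HTT~7.2.1.14); and before you can invoke HTT~6.2.3.20 at all you need the identification of the \v{C}ech localization $\mathsf{St}(\mathcal{C},\tau)$ with Lurie's topological localization $\mathrm{Shv}(\mathcal{C})$, which is itself a nontrivial input (the paper cites Dugger--Hollander--Isaksen for exactly this).
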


In the succeeding sections, we will encounter several sites together with functors between them that are classically known to induce geometric morphisms between their associated 1-topoi of sheaves of sets ---they are either continuous or cocontinuous in the terminology of \cite{MR0354652}. We will then immediately be able to lift this geometric morphisms to the 1-localic $\infty$-topoi of $\infty$-stacks over them.

The reader that feels more comfortable working in the hypercomplete $\infty$-topos $\mathsf{St}(\mathcal{C}, \tau)^\wedge$ can breathe a sigh of relief knowing that these two $\infty$-topoi coincide when the 1-topos of their 0-truncated objects has enough points ---which is certainly the case for the sites that we examine below.

\subsection{Towards cohesive structures}
\label{section:CohesiveStructures}

In this section we recall the theory of the de Rham construction for $\infty$-stacks of \cite{ST_deRhamInftyStacks}. Almost all of the material is contained in \emph{loc.cit.}. Perhaps the only novelty resides in the pervasive use of the language of $\infty$-categories. On the one hand, this streamlines certains aspects of the theory; on the other, it allows us to state all of the results at the $\infty$-categorical level and not only at the level of their homotopy categories.

\subsubsection{}
\label{section:Reduction}

Let $(\mathcal{C}, \tau)$ be any one of the following Grothendieck sites:
\begin{itemize}
\item $(\mathsf{Aff}_\CC, \text{{\'e}t})$: the category of affine complex schemes equipped with the {\'e}tale topology;
\item $(\mathsf{Aff}_{\CC, \mathrm{ft}}, \text{{\'e}t})$: the full subcategory of the above consisting of affine schemes of finite type over $\operatorname{Spec}\CC$ with the induced topology; or
\item $(\mathsf{An}, \text{{\'e}t})$: the site of complex analytic spaces endowed with the topology in which covers are jointly surjective collections of local isomorphisms ---also known as the analytic {\'e}tale topology.
\end{itemize}
Of course these three sites are intimately related: the first two in the obvious fashion, and the last two through the analytification functor ---of which we will say more in \S\ref{section:Analytification}. In this section we generically refer to an object in any of these categories as a \emph{representable space}.

For any of the choices above, let $\mathcal{C}_\mathrm{red}$ be the full subcategory of $\mathcal{C}$ consisting of geometrically reduced representable spaces ---made into a site by giving it the induced topology. The inclusion functor $j$ possesses a right adjoint, $\mathrm{red}$, that exhibits the first as a coreflective subcategory of the second:
\begin{equation} \label{eq:ReductionFunctor}
\begin{gathered}
\begin{tikzpicture}[x=60pt]
  \node (1) at (0,0.5) {$\mathcal{C}_{\text{red}}$};
  \node (2) at (1,0.5) {$\mathcal{C}$};
  \path[semithick,{Hooks[right]}->]
    ([yshift=-5pt]1.east) edge node[descr,pos=.5,scale=0.75] {$j$} ([yshift=-5pt]2.west);
  \path[semithick,->]
    ([yshift=5pt]2.west) edge node[descr,pos=.5,scale=0.75] {$\mathrm{red}$} ([yshift=5pt]1.east);
\end{tikzpicture}
\end{gathered}
\end{equation}
The usual yoga of functoriality of categories of presheaves \cite[Expos{\' e} I, \S 5]{MR0354652} yields an adjoint quadruple,
\begin{equation} \label{eq:QuadrupleFromReductionFunctor}
j_! \;\;\dashv\;\; j^\ast \cong \mathrm{red}_! \;\;\dashv\;\; j_\ast \cong \mathrm{red}^\ast \;\;\dashv\;\; j^! := \mathrm{red}_\ast,
\end{equation} 
between the categories $\left[ \mathcal{C}^\mathrm{op}, \mathsf{Set} \right]$ and $\left[ \mathcal{C}_\mathrm{red}^\mathrm{op}, \mathsf{Set} \right]$ of presheaves of sets on $\mathcal{C}$ and $\mathcal{C}_\mathrm{red}$, respectively. Since both $j$ and $\mathrm{red}$ are continuous and cocontinuous \cite[Expos{\' e} III]{MR0354652} , these four functors descend to give another adjoint quadruple of functors, this time between their respective categories of sheaves of sets. Furthermore, $\mathcal{C}$ and $\mathcal{C}_\mathrm{red}$ both possess finite limits, and hence we can use Proposition \ref{proposition:LiftingGeometricMorphisms} to lift the last three functors\footnote{Since $j$ does not preserve finite limits, $j_!$ does not either and hence the pair $j_! \dashv j^\ast$ is \emph{not} a geometric morphism.} of \eqref{eq:QuadrupleFromReductionFunctor} to their 1-localic $\infty$-topoi of $\infty$-stacks:
\begin{equation} \label{eq:InfinitesimalCohesion}
\begin{gathered}
\begin{tikzpicture}[x=100pt]
  \node (1) at (0,0.5) {$\mathsf{St}(\mathcal{C}_{\text{red}}, \tau)$};
  \node (2) at (1,0.5) {$\mathsf{St}(\mathcal{C}, \tau)$};
  \path[semithick,{Hooks[right]}->]
    (1.east) edge node[descr,pos=.5,scale=0.75] {$j_\ast$} (2.west);
  \path[semithick,->]
    ([yshift=-10pt]2.west) edge node[descr,pos=.5,scale=0.75] {$j^\ast$} ([yshift=-10pt]1.east)
    ([yshift=10pt]2.west) edge node[descr,pos=.5,scale=0.75] {$j^!$} ([yshift=10pt]1.east);
\end{tikzpicture}
\end{gathered}
\end{equation}

Denote by $\mathrm{Red} = j \circ \mathrm{red}$ the idempotent comonad associated to the pair \eqref{eq:ReductionFunctor}, which sends a representable space to its induced reduced representable subspace. The adjoint quadruple \eqref{eq:QuadrupleFromReductionFunctor} induces an adjoint triple
\[
\mathrm{Red}_! \cong j_! \circ j^\ast \;\;\dashv\;\; \mathrm{Red}^\ast \cong j^\ast \circ j_\ast \;\;\dashv\;\; \mathrm{Red}_\ast \cong j_\ast \circ j^!
\]
of endofunctors on $\left[ \mathcal{C}^\mathrm{op}, \mathsf{Set} \right]$; again, the last two can be lifted all the way to the $\infty$-level:
\begin{equation} \label{eq:deRhamGeometricMorphism}
\begin{gathered}
\begin{tikzpicture}[x=100pt]
  \node (1) at (0,0.5) {$\mathsf{St}(\mathcal{C}, \tau)$};
  \node (2) at (1,0.5) {$\mathsf{St}(\mathcal{C}, \tau)$};
  \path[semithick,->]
    ([yshift=-5pt]2.west) edge node[descr,pos=.5,scale=0.75] {$(-)_\mathrm{dR}$} ([yshift=-5pt]1.east)
    ([yshift=5pt]1.east) edge node[descr,pos=.5,scale=0.75] {$\delta$} ([yshift=5pt]2.west);
\end{tikzpicture}
\end{gathered}
\end{equation}
where, following the notation of \cite[Proposition 3.3]{ST_deRhamInftyStacks}, we denote the functors $\mathrm{Red}^\ast$ and $\mathrm{Red}_\ast$ (at the level of the $\infty$-topoi) by $(-)_\mathrm{dR}$ and $\delta$, respectively. The first of these receives the name of \emph{de Rham functor}, and the image of an $\infty$-stack $\mathfrak{X}$ under it is its \emph{de Rham stack}, $\mathfrak{X}_\mathrm{dR}$.

The counit $\mathrm{Red} \to \mathrm{id}$ of the comonad $\mathrm{Red}$ induces a natural transformation $\mathrm{id} \to (-)_\mathrm{dR}$ of $\infty$-functors. We say that an $\infty$-stack $\mathfrak{X}$ is \emph{formally smooth} if the natural morphism $\mathfrak{X} \to \mathfrak{X}_\mathrm{dR}$ is an effective epimorphism ---that is, if its de Rham stack is (equivalent to) the $\infty$-colimit of the {\v C}ech nerve of $\mathfrak{X} \to \mathfrak{X}_\mathrm{dR}$. If $\mathfrak{X}$ is 0-truncated this definition coincides with the classical one, which is nothing but the infinitesimal lifting property.

\subsubsection{}

Let $\ast$ denote the terminal category. We can extend \eqref{eq:ReductionFunctor} to a diagram
\[
\begin{tikzpicture}[x=80pt, y=60pt]
  \node (1) at (0,1) {$\mathcal{C}_{\text{red}}$};
  \node (2) at (1,1) {$\mathcal{C}$};
  \node (3) at (0.5, 0) {$\ast$};
  \path[semithick,{Hooks[right]}->]
    ([yshift=-5pt]1.east) edge node[descr,pos=.5,scale=0.75] {$j$} ([yshift=-5pt]2.west);
  \path[semithick,->]
    ([yshift=5pt]2.west) edge node[descr,pos=.5,scale=0.75] {$\mathrm{red}$} ([yshift=5pt]1.east)
    ([xshift=-7pt]1.south) edge[shorten <=4pt,shorten >=-6pt] node[descr,pos=.7,scale=0.75] {$\pi_\mathrm{red}$} ([xshift=-7pt]3.north west)
    ([xshift=5pt]3.north west) edge[shorten <=1pt,shorten >=3pt] node[descr,pos=.5,scale=0.75] {$i_\mathrm{red}$} ([xshift=5pt]1.south)
    ([xshift=7pt]2.south) edge[shorten <=4pt,shorten >=-6pt] node[descr,pos=.7,scale=0.75] {$\pi$} ([xshift=7pt]3.north east)
    ([xshift=-5pt]3.north east) edge[shorten <=1pt,shorten >=3pt] node[descr,pos=.5,scale=0.75] {$i$} ([xshift=-5pt]2.south);
\end{tikzpicture}
\]
Here $\pi$ (resp., $\pi_\mathrm{red}$) is the unique functor to $\ast$, and its right adjoint $i$ (resp., $i_\mathrm{red}$) takes the unique object of $\ast$ to the terminal object of $\mathcal{C}$ (resp., $\mathcal{C}_\mathrm{red}$). The following relations are easy to check:
\begin{equation} \label{eq:Relations}
\pi_\mathrm{red} \circ \mathrm{red} = \pi, \quad \pi \circ j = \pi_\mathrm{red}, \quad j \circ i_\mathrm{red} = i, \quad \mathrm{red} \circ i = i_\mathrm{red}.
\end{equation}
As before, we obtain an adjoint quadruple of functors,
\begin{equation} \label{eq:QuadrupleFromInclusionFunctor}
\pi_! \;\;\dashv\;\; \pi^\ast \cong i_! \;\;\dashv\;\; \pi_\ast \cong i^\ast \;\;\dashv\;\; \pi^! := i_\ast
\end{equation}
\begin{equation} \label{eq:QuadrupleFromInclusionFunctorRed}
\text{(resp.,}\qquad \pi_{\mathrm{red},!} \;\;\dashv\;\; \pi_\mathrm{red}^\ast \cong i_{\mathrm{red},!} \;\;\dashv\;\; \pi_{\mathrm{red},\ast} \cong i_\mathrm{red}^\ast \;\;\dashv\;\; \pi_\mathrm{red}^! := i_{\mathrm{red},\ast} \quad\text{)},
\end{equation}
between the appropriate categories of presheaves of sets. Now, $i$ (resp., $i_\mathrm{red}$) is both continuous and cocontinuous and hence the last three functors in \eqref{eq:QuadrupleFromInclusionFunctor} (resp., \eqref{eq:QuadrupleFromInclusionFunctorRed}) descend to the categories of sheaves of sets\footnote{It is easy to get fooled into thinking that $\pi$ (resp., $\pi_\mathrm{red}$) preserves covers. The fact that this is not true stems from the fact that there are empty representable spaces in $\mathcal{C}$ (resp. $\mathcal{C}_\mathrm{red}$). I thank Urs Schreiber for clearing my confusion about this point.}, and then lift via Proposition \ref{proposition:LiftingGeometricMorphisms} to the 1-localic $\infty$-topoi of $\infty$-stacks. From its avatar as $i^\ast$ (resp., $i_\mathrm{red}^\ast$) it is obvious that  $\pi_\ast$ (resp., $\pi_{\mathrm{red},\ast}$) is nothing but the canonical functor of global sections, which we denote by $\Gamma$ (resp., $\Gamma_\mathrm{red}$) following the standard terminology; its left adjoint, $\pi^\ast$ (resp., $\pi_\mathrm{red}^\ast$), is then the extension to $\infty$-stacks of the locally constant sheaf functor, which we denote by $\mathrm{const}$ (resp. $\mathrm{const}_\mathrm{red}$). All in all, we have the following diagram of $\infty$-functors:
\begin{equation} \label{eq:InfinitesimalCohesionPlusCohesion}
\begin{gathered}
\begin{tikzpicture}[x=100pt, y=125pt]
  \node (1) at (0,0.5) {$\mathsf{St}(\mathcal{C}_{\text{red}}, \tau)$};
  \node (2) at (1,0.5) {$\mathsf{St}(\mathcal{C}, \tau)$};
  \node (3) at (0.5, 0) {$\infty\mathsf{Gpd}$};
  \path[semithick,{Hooks[right]}->]
    (1.east) edge node[descr,pos=.5,scale=0.75] {$j_\ast$} (2.west)
    ([xshift=12pt]3.north east) edge[out=45,in=270] node[descr,pos=.4,scale=0.75,xshift=5pt] {$\mathrm{const}$} ([xshift=10pt]2.south)
    ([xshift=-12pt]3.north east) edge[out=45,in=270] node[descr,pos=.53,scale=0.75,xshift=-2pt] {$\pi^!$} ([xshift=-10pt]2.south);
  \path[semithick,{Hooks[left]}->]
    ([xshift=12pt]3.north west) edge[out=135,in=270] node[descr,pos=.55,scale=0.75,xshift=5pt] {$\pi_\mathrm{red}^!$} ([xshift=10pt]1.south)
    ([xshift=-12pt]3.north west) edge[out=135,in=270] node[descr,pos=.4,scale=0.75,xshift=-8pt] {$\mathrm{const}_\mathrm{red}$} ([xshift=-10pt]1.south);
  \path[semithick,->]
    ([yshift=-10pt]2.west) edge node[descr,pos=.5,scale=0.75] {$j^\ast$} ([yshift=-10pt]1.east)
    ([yshift=10pt]2.west) edge node[descr,pos=.5,scale=0.75] {$j^!$} ([yshift=10pt]1.east)
    (1.south) edge[out=270, in=135] node[descr,pos=.35,scale=0.75,xshift=-2pt] {$\Gamma_\mathrm{red}\!\!\!\!\mbox{}$} (3.north west)
    (2.south) edge[out=270, in=45] node[descr,pos=.38,scale=0.75] {$\Gamma$} (3.north east);
\end{tikzpicture}
\end{gathered}
\end{equation}

The equalities \eqref{eq:Relations} yield a natural equivalence of geometric morphisms of $\infty$-topoi,
\begin{equation} \label{eq:NaturalTransformationFromReductionFunctorAndGlobalSectionsFunctor}
j_\ast \circ \mathrm{const}_\mathrm{red} \simeq \mathrm{const} \;\;\dashv\;\; \Gamma \simeq \Gamma_\mathrm{red} \circ j^!,
\end{equation}
which, together with the counit of the adjunction $\mathrm{const}_\mathrm{red} \dashv \Gamma_\mathrm{red}$, implies the existence of a natural transformation
\[
\mathrm{dis} := \mathrm{const} \circ \Gamma \simeq j_\ast \circ \mathrm{const}_\mathrm{red} \circ \Gamma_\mathrm{red} \circ j^! \longrightarrow j_\ast \circ j^! \simeq \delta.
\]
from the functor that associates to an $\infty$-stack the constant $\infty$-stack on its global sections, to the right adjoint to de Rham functor. In the case $(\mathcal{C}, \tau) = (\mathsf{An}, \text{{\'e}t})$, the relationship between these two is extremely simple for stacks that are deloopings of $0$-truncated group objects.

\begin{proposition}[{\cite[Proposition 3.3]{ST_deRhamInftyStacks}}] \label{proposition:Delta}
\mbox{}\\[-15pt]
\begin{itemize}[topsep=-3pt,partopsep=0pt,parsep=2pt,itemsep=2pt]
\item[(a)] For any complex Lie group $G$, $\delta(BG) \simeq B(\mathrm{dis(G)})$.
\item[(b)] For any abelian complex Lie group $A$, $\delta(B^nA) \simeq B^n(\mathrm{dis(A)})$.
\end{itemize}
\end{proposition}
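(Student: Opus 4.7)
The statement asserts that the natural map $\mathrm{dis} \to \delta$ constructed in \eqref{eq:NaturalTransformationFromReductionFunctorAndGlobalSectionsFunctor} is an equivalence when evaluated on $BG$ or on iterated abelian deloopings $B^nA$. I would split the argument into (i) a direct computation for a single Lie group, (ii) a bootstrapping step using that $\delta$ preserves loop spaces, and (iii) a connectedness check; part~(b) then follows by induction.

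For the first step, use the adjunction $(-)_\mathrm{dR}\dashv\delta$ and Yoneda to write, for any analytic space $Y$,
\[
\delta(G)(Y) \;\simeq\; \mathrm{Map}(Y_\mathrm{dR}, G) \;\simeq\; G(Y_\mathrm{dR}).
\]
By the defining formula $Y_\mathrm{dR}(Z) = Y(Z_\mathrm{red})$, a point of this mapping space is a holomorphic map $Y\to G$ whose restriction to every infinitesimal thickening of $Y_\mathrm{red}$ is constant along that thickening. For a complex Lie group $G$, this forces the derivative to vanish, so the map is locally constant in the analytic topology. Thus $\delta(G)$ is (the sheafification of) the presheaf of locally constant $\Gamma(G)$-valued functions, which is by definition $\mathrm{dis}(G) = \mathrm{const}\circ\Gamma(G)$. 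One checks that the natural transformation $\mathrm{dis}(G) \to \delta(G)$ of \eqref{eq:InfinitesimalCohesionPlusCohesion} coincides with this tautological inclusion and is therefore an equivalence.

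For the bootstrap, note that $\delta$ preserves all limits, while the loop space $\Omega$ is the pullback $\ast\times_{(-)}\ast$, so
\[
\Omega\,\delta(BG) \;\simeq\; \delta(\Omega BG) \;\simeq\; \delta(G) \;\simeq\; \mathrm{dis}(G) \;\simeq\; \Omega\bigl(B\,\mathrm{dis}(G)\bigr),
\]
where the last equivalence uses that $\mathrm{dis}(BG) \simeq B\,\mathrm{dis}(G)$ because $\mathrm{const}$, being left adjoint to $\Gamma$, commutes with the bar-construction colimit presenting $BG$. A pointed connected object of an $\infty$-topos is determined by its loop space, so it suffices to check that $\delta(BG)$ is \emph{connected}. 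Equivalently, one must show the sheaf $\pi_0\,\delta(BG)$ is trivial: via the crystalline interpretation of $Y_\mathrm{dR}$, a section over $Y$ represents a $G$-torsor on $Y_\mathrm{red}$ equipped with a flat holomorphic connection, and we need such data to admit an analytic-\'etale cover of $Y$ that trivializes it. This is the local Riemann--Hilbert / Frobenius integrability statement in the analytic category: on a sufficiently small polydisc, any flat holomorphic $G$-bundle admits a horizontal trivialization.

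For part~(b) I would induct on $n$. Assuming $\delta(B^{n-1}A)\simeq B^{n-1}\mathrm{dis}(A)$, the same loop-space argument gives $\Omega\,\delta(B^nA) \simeq \delta(B^{n-1}A) \simeq B^{n-1}\mathrm{dis}(A) \simeq \Omega B^n\mathrm{dis}(A)$, and a parallel connectedness check for flat higher $A$-gerbes --- really a Frobenius statement applied to each graded piece of the complex $\mathrm{dR}^A_X$ of Definition~\ref{definition:FlatAGerbeCech} --- closes the induction.

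The main obstacle is the connectedness step: the formal bookkeeping with loop spaces is essentially free, but the local triviality of flat holomorphic $G$-torsors genuinely depends on the site. In $(\mathsf{An},\text{\'et})$ it reduces to the abundance of small Stein neighborhoods on which Frobenius applies; in the \'etale topology of schemes, one must replace this by a formal smoothness / infinitesimal lifting argument, and the passage between the two settings is precisely what \S\ref{section:1LocalicInftyTopoi} and \S\ref{section:Analytification} are designed to handle.
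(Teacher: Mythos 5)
The paper does not prove this statement: Proposition~\ref{proposition:Delta} is cited verbatim from Simpson--Teleman \cite[Proposition~3.3]{ST_deRhamInftyStacks}, so there is no in-paper proof to compare against. With that caveat, your overall architecture --- compute $\delta$ on $G$ directly, then bootstrap through $\Omega\delta(BG)\simeq\delta(\Omega BG)\simeq\delta(G)$, then close with a connectedness argument via local triviality of flat torsors --- is the right one, and the Frobenius/Poincar\'e-lemma input you identify for the connectedness step is indeed the crux.

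Two things need correcting. First, a minor but real slip: you justify $\mathrm{dis}(G)\simeq\Omega\bigl(B\,\mathrm{dis}(G)\bigr)$ by appealing to ``$\mathrm{dis}(BG)\simeq B\,\mathrm{dis}(G)$ because $\mathrm{const}$ commutes with the bar colimit.'' That auxiliary claim is neither needed (the equivalence $\mathrm{dis}(G)\simeq\Omega B\,\mathrm{dis}(G)$ is just looping-delooping for the group object $\mathrm{dis}(G)$) nor does the offered justification work: $\mathrm{dis}=\mathrm{const}\circ\Gamma$, and while $\mathrm{const}$ preserves colimits, $\Gamma$ (a right adjoint) does not, so one cannot push $\mathrm{dis}$ through the bar construction this way. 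What the argument actually requires is that the chain of equivalences $\Omega\delta(BG)\simeq\mathrm{dis}(G)$ respects group structures (so that applying $B$ recovers $\delta(BG)$); this holds because $\delta$ is left exact and hence preserves group objects, but you should say so rather than invoke the false commutation.

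Second, and more substantively, your closing paragraph misreads the scope of the statement. The paper explicitly restricts Proposition~\ref{proposition:Delta} to the case $(\mathcal{C},\tau)=(\mathsf{An},\text{\'et})$, and the \'etale-scheme analogue is in fact \emph{false}: there is no ``formal smoothness / infinitesimal lifting'' patch. For $Y=\mathbb{A}^1\setminus\{0\}$, flat line bundles modulo isomorphism form $\mathbb{C}/\mathbb{Z}$ (via $d+\lambda\,dt/t$), whereas torsors under the constant sheaf $\mathrm{dis}(\GG_m)=\underline{\CC^\times}$ in the \'etale topology are classified by continuous homomorphisms $\pi_1^{\text{\'et}}(Y)\cong\widehat{\mathbb{Z}}\to\CC^\times$, i.e.\ only finite-order monodromies $\mathbb{Q}/\mathbb{Z}$. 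Flat connections with irrational monodromy do not trivialize \'etale-locally. The paper handles the algebraic side not by proving an \'etale version of this proposition but by the GAGA-type comparisons of Proposition~\ref{proposition:AlgebraicHodgeCorrespondence} in \S\ref{section:Analytification}, which directly identify the relevant algebraic and analytic mapping spaces under reductivity/finiteness hypotheses; \S\ref{section:1LocalicInftyTopoi} plays no role in that passage beyond furnishing the topos-theoretic scaffolding.
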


\subsubsection{}

Although the functor $\mathrm{const}$ (resp., $\mathrm{const}_\mathrm{red}$) might not have a further left adjoint ---and even if it does, the latter is never induced by $\pi_!$ (resp., $\pi_{\mathrm{red}, !}$)--- it does preserve finite limits and hence possesses a pro-left adjoint \cite[Expos{\' e} I, \S 8.11]{MR0354652} known as the \emph{fundamental pro-$\infty$-groupoid} functor \cite{NoteEtaleHomotopy} (see also \cite[Appendix A]{HigherAlgebra} and \cite{arXiv_qalg_9609004}):
\[
\Pi_\infty : \mathsf{St}(\mathcal{C}, \tau) \longrightarrow \mathrm{Pro}(\infty\mathsf{Gpd}) \qquad \left( \text{resp.,}\quad \Pi_{\infty\mathrm{red}} : \mathsf{St}(\mathcal{C}_\mathrm{red}, \tau) \longrightarrow \mathrm{Pro}(\infty\mathsf{Gpd}) \quad\right).
\]
We will not go further in the study of this functor. Here we simply note that the natural equivalence \eqref{eq:NaturalTransformationFromReductionFunctorAndGlobalSectionsFunctor} of geometric morphisms implies a natural equivalence of $\infty$-functors,
\begin{equation} \label{eq:FundamentalProInftyGroupoidAndReduction}
\Pi_{\infty\mathrm{red}} \circ j^\ast \simeq \Pi_\infty,
\end{equation}
that we will use below in a cohomology computation.

\subsubsection{}

We close this section with a comment about its title. Inspired by work of Lawvere \cite{MR2125786,MR2369017} in the context of 1-topoi, Schreiber defines a \emph{cohesive $\infty$-topos} \cite[\S 3.4]{arXiv1310.7930} to be an $\infty$-topos $\mathsf{H}$ whose global sections geometric morphism extends to a quadruple
\vspace{1pt}
\begin{equation} \label{eq:DefinitionCohesiveInftyTopos}
\begin{gathered}
\begin{tikzpicture}[x=80pt]
  \node (1) at (0,0.5) {$\mathsf{H}$};
  \node (2) at (1,0.5) {$\infty\mathsf{Gpd}$};
  \path[semithick,->]
    ([yshift=5pt]1.east) edge node[descr,pos=.5,scale=0.75] {$\Gamma$} ([yshift=5pt]2.west)
    ([yshift=-15pt]1.east) edge node[descr,pos=.5,scale=0.75] {$\Pi_\infty$} ([yshift=-15pt]2.west);
  \path[semithick,{Hooks[left]}->]
    ([yshift=15pt]2.west) edge ([yshift=15pt]1.east)
    ([yshift=-5pt]2.west) edge node[descr,pos=.5,scale=0.75] {$\mathrm{const}$} ([yshift=-5pt]1.east);
\end{tikzpicture}
\end{gathered}
\end{equation}
of adjoint $\infty$-functors in which both adjoints to $\Gamma$ are fully faithful $\infty$-functors, and $\Pi_\infty$ preserves products. In similar form, this structure already appears in \cite{ST_deRhamInftyStacks} and \cite{MR1731635}.

We have come close to exhibiting our $\infty$-topoi $\mathsf{St}(\mathcal{C}, \tau)$ as cohesive $\infty$-topoi. Indeed, we are only missing that the pro-left adjoint to $\mathrm{const}$ should actually land in $\infty\mathsf{Gpd}$ rather than in $\mathrm{Pro}(\infty\mathsf{Gpd})$, and that it preserves finite products. It seems altogether possible that this is true in the analytic category, for \cite[\S 2.16]{ST_deRhamInftyStacks} constructs this functor explicitly at the level of the homotopy category. It is doubtful, though, that the same holds in the algebraic context, since there $\Pi_\infty$ should encode, among other things, the {\' e}tale fundamental group, which is often a true pro-algebraic group.

If $j_!$ in \eqref{eq:QuadrupleFromReductionFunctor} also lifts to the $\infty$-level, \eqref{eq:InfinitesimalCohesion} realizes what Schreiber calls an \emph{infinitesimal cohesive neighborhood} \cite[\S 3.5]{arXiv1310.7930}. Although we do believe that this should be the case for any and all of our choices of $(\mathcal{C}, \tau)$, we do not have a proof of this fact.

\subsection{The case of a smooth projective variety}
\label{section:CaseOfSmoothProjectiveVariety}

In this section we restrict ourselves to the analytic topology. The algebraic counterpart is the concern of the next section.

\subsubsection{The classical nonabelian Hodge correspondence.}
\label{section:ClassicalNonabelianHodgeCorrespondence}

For $X$ a smooth projective variety, the $(n+1)$-simplices of the {\v C}ech nerve of $X \to X_\mathrm{dR}$ are given by the formal completion of the main diagonal in $X^{\times n}$. Furthermore, the $\infty$-colimit of this simplicial object agrees with the ordinary colimit of its 1-truncation, and so $X_\mathrm{dR}$ can be simply realized as the quotient of $X$ by the formal completion of the diagonal in $X \times X$:
\[
X_\mathrm{dR} = \Big[ (X \times X)_\Delta^\wedge \rightrightarrows X \Big]
\]
Coherent sheaves over $X_\mathrm{dR}$ are then easily seen to be the same thing as crystals of coherent sheaves on $X$ in the sense of Grothendieck \cite{MR0269663}: that is, vector bundles with flat connection.

On the other hand, Higgs bundles can be codified as vector bundles on the so-called \emph{Dolbeault stack} of $X$. The latter is the deformation of $X_\mathrm{dR}$ to the normal cone, and can be realized as the quotient of $X$ by the formal completion of the zero section in the total space of its tangent bundle \cite{MR1492538}:
\[
X_\mathrm{Dol} := \Big[ TX_{\,0}^\wedge \rightrightarrows X \Big]
\]
The nonabelian Hodge theorem can be succintly expressed in terms of these stacks.

\begin{theorem}[\cite{MR1179076}] \label{th:NonabelianHodgeTheoremGTorsors}
For $G$ a linear algebraic group over $\CC$, there is an equivalence
\[
\Map\!\big( X_\mathrm{dR}, BG \big) \simeq \Map\!\big( X_\mathrm{Dol}, BG \big)^{\mathrm{ss}, 0}
\]
\end{theorem}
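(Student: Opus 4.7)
The plan is a direct translation: identify the two mapping spaces with their classical counterparts (flat $G$-torsors and Higgs $G$-torsors on $X$), match the superscript $(\mathrm{ss},0)$ with the semistability and vanishing-Chern-numbers conditions from the introduction, and then invoke Simpson's original theorem as a black box.

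\emph{Unpacking the left-hand side.} Using the presentation $X_{\mathrm{dR}} = \bigl[(X \times X)^\wedge_\Delta \rightrightarrows X\bigr]$ together with descent for $BG$-valued maps in the $\infty$-topos, a map $X_{\mathrm{dR}} \to BG$ is equivalent to the datum of a $G$-torsor $P \to X$ plus a compatible isomorphism between its two pullbacks to the formal neighborhood of the diagonal, satisfying the obvious cocycle condition on $(X \times X \times X)^\wedge_\Delta$. This descent datum is, by the classical theory of crystals \cite{MR0269663}, the same as a flat $G$-connection on $P$: the formal neighborhood of the diagonal encodes infinitesimal parallel transport, and the cocycle condition is flatness. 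For $G=GL_n$ this is just the identification of crystals of locally free modules with flat vector bundles; the extension to general linear algebraic $G$ can be carried out either directly at the level of torsors or, more cheaply, via a Tannakian argument, since $G$ embeds into some $GL_N$ and flat $G$-torsors are the same as flat $GL_N$-torsors whose monodromy is reduced to $G$.

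\emph{Unpacking the right-hand side.} By the parallel presentation $X_{\mathrm{Dol}} = \bigl[TX^\wedge_0 \rightrightarrows X\bigr]$, a map $X_{\mathrm{Dol}} \to BG$ is a $G$-torsor $P \to X$ equipped with descent data along the formal neighborhood of the zero section of $TX$, subject to a cocycle condition on the analogous threefold self-product. After linearization along the zero section this datum is equivalent to an element $\phi \in H^0(X,\Omega^1_X \otimes \mathrm{Ad}\,P)$, and the cocycle condition reduces to $[\phi,\phi]=0$; in other words, to a Higgs field on $P$. This identification is due to Simpson \cite{MR1492538} in the vector-bundle case, and again passes to $G$-torsors either tautologically (the construction is $G$-equivariant throughout) or by a Tannakian reduction.

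\emph{Matching conditions and conclusion.} The superscript $(\mathrm{ss},0)$ on $\Map(X_{\mathrm{Dol}}, BG)$ is by definition the full sub-$\infty$-groupoid spanned by those Higgs $G$-torsors that are semistable with vanishing Chern numbers in the sense recalled in the introduction. Under the identifications above, the theorem therefore reads: the $\infty$-groupoid of flat $G$-torsors on $X$ is equivalent to the $\infty$-groupoid of semistable Higgs $G$-torsors with vanishing Chern numbers. This is exactly the statement of the classical nonabelian Hodge theorem \cite{MR1179076}, upgraded from a bijection on equivalence classes to an equivalence of groupoids by tracking morphisms (for $GL_n$, intertwiners of flat connections correspond to intertwiners of Higgs fields under the harmonic-bundle correspondence; for general $G$, this is formal given the Tannakian reduction).

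\emph{Expected difficulty.} The serious analytic content is entirely encapsulated in Simpson's theorem; nothing further is required on that front. The only real work is the bookkeeping involved in the two identifications above, and in particular in checking that the descent-theoretic presentation of a flat/Higgs $G$-torsor on the stacky quotient agrees with the section-level definition (Cartan formula for curvature, equivariance and horizontality of the connection form, etc.) given in \S\ref{section:GroupsOtherThanGLn}. For general $G$ the cleanest route is probably a Tannakian reduction from the $GL_n$ case, so the main subtle point is to verify that the Tannakian formalism carries through coherently at the $\infty$-categorical level rather than only at the level of $\pi_0$.
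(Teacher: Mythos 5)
Your proposal is correct and follows essentially the same route as the paper. The paper states Theorem~\ref{th:NonabelianHodgeTheoremGTorsors} as a citation of Simpson \cite{MR1179076}, after briefly recording the dictionary you spell out in more detail: $\Map(X_{\mathrm{dR}},BG)$ as flat $G$-torsors via the groupoid presentation by the formal neighborhood of the diagonal (crystals), and $\Map(X_{\mathrm{Dol}},BG)$ as Higgs $G$-torsors via the deformation to the normal cone; the superscript $(\mathrm{ss},0)$ is the semistable/vanishing-Chern-numbers subcategory, and the analytic content is black-boxed in \cite{MR1179076}.
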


\subsubsection{}
\label{section:GerbesOverXdRAndXDol}

Let $A$ be an abelian linear algebraic group over $\CC$. The cohomology of $X_\mathrm{dR}$ (resp., $X_\mathrm{Dol}$) with coefficients in $A$ can be expressed in terms of the de Rham (resp., Dolbeault) complex of $X$ with coefficients in $A$ introduced in Definition \ref{definition:FlatAGerbeCech} (resp., Definition \ref{definition:HiggsAGerbeCech}):
\[
\mathrm{dR}^{A}_X := \Big[ A \xrightarrow{\;\;a \,\mapsto\, a^{-1}da\;\;} \Omega^1_X \otimes \mathfrak{a} \xrightarrow{\quad d \quad} \Omega^2_X \otimes \mathfrak{a} \xrightarrow{\quad d \quad} \cdots \Big]
\]
\[
\left( \text{resp., }\quad \mathrm{Dol}^{A}_X := \Big[ A \xrightarrow{\quad 0 \quad} \Omega^1_X \otimes \mathfrak{a} \xrightarrow{\quad 0 \quad} \Omega^2_X \otimes \mathfrak{a} \xrightarrow{\quad 0 \quad} \cdots \Big] \quad\right).
\]

\vspace{.5em}
\begin{lemma}
$\pi_i \Map\!\big( X_\mathrm{dR}, B^k A \big) \cong \mathbb{H}^{k-i} \big( X, \mathrm{dR}^{A}_X \big)$.
\end{lemma}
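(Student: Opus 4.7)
The strategy is to combine the adjunction $(-)_\mathrm{dR} \dashv \delta$ of \eqref{eq:deRhamGeometricMorphism} with Proposition \ref{proposition:Delta}(b) to reduce the problem to a classical sheaf-cohomology computation, and then invoke the Poincar\'e lemma to recover the hypercohomology of the de Rham complex. In outline: the adjunction identifies maps out of $X_\mathrm{dR}$ with maps into $\delta(B^k A)$; Proposition \ref{proposition:Delta} identifies $\delta(B^k A)$ with $B^k \mathrm{dis}(A)$; the resulting mapping space computes sheaf cohomology with coefficients in $\mathrm{dis}(A)$; and the latter agrees with the hypercohomology of $\mathrm{dR}^A_X$ because this complex resolves $\mathrm{dis}(A)$ in the analytic topology.

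More concretely, the first two steps give
$$\Map\!\big( X_\mathrm{dR}, B^k A \big) \simeq \Map\!\big( X, \delta(B^k A) \big) \simeq \Map\!\big( X, B^k \mathrm{dis}(A) \big).$$
Since $B^k \mathrm{dis}(A)$ is the Eilenberg--MacLane $\infty$-stack associated to the $0$-truncated abelian sheaf $\mathrm{dis}(A)$, the standard identification of the homotopy groups of mapping spaces into such stacks yields $\pi_i \Map\!\big( X, B^k \mathrm{dis}(A) \big) \cong H^{k-i}\!\big( X, \mathrm{dis}(A) \big)$, where $H^{*}$ denotes ordinary sheaf cohomology.

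It remains to match $H^{*}(X, \mathrm{dis}(A))$ with $\mathbb{H}^{*}(X, \mathrm{dR}^A_X)$, which I would do by exhibiting $\mathrm{dR}^A_X$ as a resolution of $\mathrm{dis}(A)[0]$. Three local checks are needed: (i) the kernel of $a \mapsto a^{-1}da$ is $\mathrm{dis}(A)$, since this map vanishes precisely on locally constant $A$-valued sections; (ii) exactness at $\Omega^k_X \otimes \mathfrak{a}$ for $k \geq 2$ is the holomorphic Poincar\'e lemma applied to $\mathfrak{a}$-valued forms; and (iii) exactness at $\Omega^1_X \otimes \mathfrak{a}$ reduces, via the Poincar\'e lemma and the fact that $\exp\colon \mathfrak{a} \to A$ is a local biholomorphism near the identity, to integrating a closed $\mathfrak{a}$-valued 1-form $\omega = d\mu$ locally to $a = \exp(\mu)$, which satisfies $a^{-1}da = d\mu = \omega$ (here it is essential that $A$ is abelian, so that $\exp$ is a local homomorphism).

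The main obstacle is precisely this resolution statement; once it is in hand, the hypercohomology of $\mathrm{dR}^A_X$ is computed as the sheaf cohomology of any resolution of $\mathrm{dis}(A)$, closing the chain of isomorphisms. This step depends essentially on working over the analytic site, where both the Poincar\'e lemma and the local invertibility of $\exp$ are available in their classical form; the algebraic counterpart, treated in the subsequent section, will require a different argument.
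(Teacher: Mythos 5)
Your proposal follows exactly the paper's route: apply the adjunction $(-)_\mathrm{dR} \dashv \delta$ from \eqref{eq:deRhamGeometricMorphism}, identify $\delta(B^kA)\simeq B^k\mathrm{dis}(A)$ via Proposition~\ref{proposition:Delta}(b), read off sheaf cohomology with coefficients in $\mathrm{dis}(A)$, and conclude by the holomorphic Poincar\'e lemma exhibiting $\mathrm{dR}^A_X$ as a resolution of $\mathrm{dis}(A)$. Your expanded local verification (kernel, exactness in degrees $\geq 2$, and the $\exp$ argument in degree $1$) is a correct unpacking of what the paper simply invokes as ``the holomorphic Poincar\'e lemma.''
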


\begin{proof}
The holomorphic Poincar{\' e} lemma asserts that $\mathrm{dR}^{A}_X$ is a resolution of the sheaf $\mathrm{dis}(A)$. Using \eqref{eq:deRhamGeometricMorphism} and Proposition \ref{proposition:Delta}, we have
\begin{align*}
\pi_i \Map\!\big( X_\mathrm{dR}, B^k A \big) &\cong \pi_i \Map\!\big( X, \delta (B^k A) \big) \cong \pi_i \Map\!\big( X, B^k(\mathrm{dis}(A)) \big) \\ &= H^{k-i} \big( X, \mathrm{dis}(A) \big) \cong \mathbb{H}^{k-i} \big( X, \mathrm{dR}^{A}_X \big). \tag*{\qedhere}
\end{align*}
\end{proof}

\begin{lemma} \label{lemma:CohomologyDolbeaultStack}
$\pi_i \Map\!\big( X_\mathrm{Dol}, B^k A \big) \cong \mathbb{H}^{k-i} \big( X, \mathrm{Dol}^{A}_X \big)$.
\end{lemma}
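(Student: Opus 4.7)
The plan is to proceed by the same template as the preceding lemma, but replacing Proposition \ref{proposition:Delta} and the holomorphic Poincar{\'e} lemma with a direct cosimplicial computation. The simplicial presentation $X_{\mathrm{Dol}} = \big[ TX_0^\wedge \rightrightarrows X \big]$ recalled in \S\ref{section:ClassicalNonabelianHodgeCorrespondence} realizes $X_{\mathrm{Dol}}$ as an effective colimit of the {\v C}ech nerve $X^{[\bullet]}_{\mathrm{Dol}}$ of the effective epimorphism $X \to X_{\mathrm{Dol}}$, where $X^{[p]}_{\mathrm{Dol}} = (TX_0^\wedge)^{\times_X p}$. This presentation yields a descent spectral sequence
\[
E_1^{p,q} = H^{k+q}\big( X^{[p]}_{\mathrm{Dol}}, A \big) \;\Longrightarrow\; \pi_{-p-q}\Map(X_{\mathrm{Dol}}, B^k A).
\]

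To compute the $E_1$ page, note that $X^{[p]}_{\mathrm{Dol}}$ is a formal thickening of $X$ whose structure sheaf is $\widehat{\mathrm{Sym}}((\Omega^1_X)^{\oplus p})$. Via the exponential short exact sequence relating $A$ and its Lie algebra $\mathfrak{a}$, sections of $A$ on this formal neighborhood decompose (as sheaves of abelian groups on $X$) as $A \oplus \big( \widehat{\mathrm{Sym}}^{\geq 1}((\Omega^1_X)^{\oplus p}) \otimes \mathfrak{a} \big)$, and the decomposition lifts to the higher cohomology. Passing to the normalized Dold-Kan complex is then a Hochschild-Kostant-Rosenberg-style computation: the alternating-sum cosimplicial differential extracts the antisymmetric (exterior) part of the symmetric algebra, producing $\Omega^p_X \otimes \mathfrak{a}$ in cosimplicial degree $p$. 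All remaining differentials vanish because the only cosimplicial maps between sheaves in distinct symmetric degrees factor through the zero section $X \hookrightarrow X^{[p]}_{\mathrm{Dol}}$.

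The spectral sequence therefore degenerates at $E_2$, whose total group agrees with
\[
\mathbb{H}^{k-i}\big( X, \mathrm{Dol}^A_X \big) \;=\; H^{k-i}(X, A) \;\oplus\; \bigoplus_{p \geq 1} H^{k-i-p}\big( X, \Omega^p_X \otimes \mathfrak{a} \big),
\]
the splitting on the right being a free consequence of the vanishing of the differentials in $\mathrm{Dol}^A_X$. The main technical obstacle will be the HKR-type identification of the normalized subcomplex in the second step, which must be executed carefully in the analytic (or {\'e}tale) setting with attention to the interaction between the exponential sequence and the cosimplicial face maps; this step is the Dolbeault counterpart to the way the holomorphic Poincar{\'e} lemma enters the preceding proof.
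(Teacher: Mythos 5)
Your approach is genuinely different from the paper's, and it is worth comparing carefully. The paper reduces to the three irreducible pieces of an abelian linear algebraic group ($\GG_a$, a finite group $F$, and $\GG_m$), handles $\GG_a$ by citing Simpson's classical computation, handles finite $F$ by showing $\Pi_\infty X_\mathrm{Dol} \simeq \Pi_\infty X$ using the cohesive apparatus of \S\ref{section:CohesiveStructures} (the reduction comonad and \eqref{eq:FundamentalProInftyGroupoidAndReduction}), and then handles $\GG_m$ by the analytic exponential sequence and the two previous cases. You propose instead a single uniform computation via the descent spectral sequence for the simplicial presentation $X_\mathrm{Dol} = |X^{[\bullet]}_\mathrm{Dol}|$, identifying the normalized cochains with $\Omega^\bullet_X \otimes \mathfrak{a}$ by a formal-group-cohomology (HKR-style) argument. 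This uniformity is appealing and avoids the structure theorem for abelian groups.

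However, there is a concrete gap, and it is precisely where the paper expends the most effort. When $A = F$ is finite, $\mathfrak{a} = 0$ and your claimed decomposition $A \oplus \big(\widehat{\mathrm{Sym}}^{\geq 1}((\Omega^1_X)^{\oplus p}) \otimes \mathfrak{a}\big)$ degenerates to the tautology $A = A$. But the input your $E_1$ page actually needs is $H^q\big(X^{[p]}_\mathrm{Dol}, F\big) \cong H^q(X, F)$, i.e., that étale/analytic cohomology with finite constant coefficients is invariant under the formal thickenings $(TX^\wedge_0)^{\times_X p}$. This is not a consequence of any Lie-algebra-level splitting and cannot be obtained from the formal logarithm. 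It is exactly what the paper's $\Pi_\infty$ argument is designed to deliver, via $\mathrm{Red}\big((TX^{[s]}_0)^{\times_X m}\big) \cong X$ and $\Pi_{\infty\mathrm{red}} \circ j^\ast \simeq \Pi_\infty$. Your sentence ``the decomposition lifts to the higher cohomology'' silently absorbs this step, and your explanation for why the remaining differentials vanish (``the only cosimplicial maps between sheaves in distinct symmetric degrees factor through the zero section'') says nothing at all in the finite case, since there are no symmetric degrees there. This needs to be addressed either by the paper's $\Pi_\infty$ route or by a direct invocation of topological invariance of the étale site under nilpotent thickenings together with a careful passage to the colimit defining $TX^\wedge_0$.

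Beyond that gap, several steps you flag as routine deserve more than a gloss: the reference to ``the exponential short exact sequence relating $A$ and $\mathfrak{a}$'' should really be the formal logarithm on the pro-nilpotent ideal (which works uniformly in both topologies and in particular does not invoke the global analytic exponential sequence); the identification of the normalized Dold–Kan complex of $p \mapsto \widehat{\mathrm{Sym}}^{\geq 1}\big((\Omega^1_X)^{\oplus p}\big) \otimes \mathfrak{a}$ with $\Omega^\bullet_X \otimes \mathfrak{a}$ with zero differential is a genuine formal-group-cohomology computation (essentially what Simpson does for $\GG_a$) and is not a one-liner; and convergence of the Bousfield–Kan spectral sequence must be justified, which here follows from $B^k A$ being $k$-truncated so that $E_1$ is concentrated in a horizontal strip. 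With the finite-coefficient invariance supplied and the HKR step written out, your argument would give a correct and arguably more self-contained proof, but as stated it conceals exactly the hardest point.
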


\begin{proof}
An abelian linear algebraic group factors as a direct sum of copies of $\GG_m$, $\GG_a$ and a finite abelian group (see \S\ref{section:LinearAlgebraicGroups}). Since delooping commutes with finite products, it is enough to check each of these cases separately. For $A = \GG_a$ the calculation is classical (see, e.g., \cite[Proposition 3.1]{arXiv_alggeom_9712020}) and we omit the proof.

Let $F$ be a discrete group ---a class to which finite groups belong---, so that $F \simeq \mathrm{dis}(F)$. Since $\mathrm{dis} = \mathrm{const} \circ \Gamma$ and $\mathrm{const}$ has $\Pi_\infty$ as a pro-left adjoint, we have
\begin{align*}
\Map\!\big( \mathfrak{X} , B^k F \big) \simeq \Map\!\big( \mathfrak{X} , \mathrm{const}\, B^k (\Gamma F) \big) \simeq  \Map\!\big( \Pi_\infty \mathfrak{X} , B^k (\Gamma F) \big);
\end{align*}
that is, the cohomology of a stack $\mathfrak{X}$ with coefficients in $F$ depends only on its fundamental pro-$\infty$-groupoid. We now claim that
\begin{equation} \label{eq:FundamentalProInftyGroupoidDolbeaultStack}
\Pi_\infty X_\mathrm{Dol} \simeq \Pi_\infty X,
\end{equation}
which implies the lemma for $A = F$.

In order to prove \eqref{eq:FundamentalProInftyGroupoidDolbeaultStack}, we proceed by a series of reductions\footnote{Pun definitely intended.}. For the first one we observe that $X_\mathrm{Dol}$ is defined as the quotient of $X$ by $TX_{\,0}^\wedge$, and that $\Pi_\infty$ commutes with $\infty$-colimits. Hence it is enough to check that
\[
\Pi_\infty \big( TX_{\,0}^\wedge \big)^{\times_X m} \simeq \Pi_\infty X.
\]
for every $m \geq 1$. But $TX_{\,0}^\wedge$ is given as the colimit of the infinitesimal neighborhoods $TX_{\,\,0}^{[s]}$ of the zero section in $TX$, and so we only need to show that
\[
\Pi_\infty \big( TX_{\,\,0}^{[s]} \big)^{\times_X m} \simeq \Pi_\infty X.
\]
for every $m \geq 1$ and $s \geq 1$. Now, since both $X$ and $\big( TX_{\,\,0}^{[s]} \big)^{\times_X m}$ are representable spaces (in the terminology of the last section), and $\mathrm{Red} \big( TX_{\,\,0}^{[s]} \big)^{\times_X m} \cong X$, we have
\[
j^\ast \left( \big( TX_{\,\,0}^{[s]} \big)^{\times_X m} \right) = \mathrm{red} \left( \big( TX_{\,\,0}^{[s]} \big)^{\times_X m} \right) \cong \mathrm{red} \circ j \circ \mathrm{red} \left( \big( TX_{\,\,0}^{[s]} \big)^{\times_X m} \right) \cong \mathrm{red}\!\left( X \right) = j^\ast\!\left( X \right)
\]
and \eqref{eq:FundamentalProInftyGroupoidAndReduction} finishes the proof:
\[
\Pi_\infty \big( TX_{\,\,0}^{[s]} \big)^{\times_X m} \simeq\Pi_{\infty\mathrm{red}} \circ j^\ast \left( \big( TX_{\,\,0}^{[s]} \big)^{\times_X m} \right) \simeq \Pi_{\infty\mathrm{red}} \circ j^\ast\!\left( X \right) \simeq \Pi_\infty X.
\]

Finally, the exponential sequence and the cases of $\GG_a$ and $\ZZ$ ---a discrete group for sure--- imply the statement for $A = \GG_m$.
\end{proof}

The above computations suggest what our coveted cover- and cocycle-independent versions of Definitions \ref{definition:FlatAGerbeCech} and \ref{definition:HiggsAGerbeCech} should be.

\begin{definition} \label{definition:FlatGerbe}
A flat $A$-gerbe over $X$ is an $A$-gerbe over $X_\mathrm{dR}$.
\end{definition}

\begin{definition} \label{definition:HiggsGerbe}
A Higgs $A$-gerbe over $X$ is an $A$-gerbe over $X_\mathrm{Dol}$.
\end{definition}

\subsubsection{The Hodge correspondence for gerbes.}
\label{section:HodgeCorrespondenceGerbes}

Since the coefficient groups of gerbes are abelian, we might expect that abelian Hodge theory should relate their flat and Higgs versions. There is, however, one important restriction ---which is, in fact, the only thing that makes the twisted correspondence we aim to prove non-trivial. Namely, that $A$ cannot contain any algebraic torus. Indeed, there is no hope of establishing an equivalence between $\Map\!\big(X_\mathrm{dR}, B^2\GG_m\big)$ and $\Map\!\big(X_\mathrm{Dol}, B^2\GG_m\big)$ or a full subcategory thereof, for the automorphism 1-categories of objects in these (2-)categories are given by
\[
\Omega\Map\!\big(X_\mathrm{dR}, B^2\GG_m\big) \simeq \Map\!\big(X_\mathrm{dR}, B\GG_m\big)
\]
and
\[
\Omega\Map\!\big(X_\mathrm{Dol}, B^2\GG_m\big) \simeq \Map\!\big(X_\mathrm{Dol}, B\GG_m\big),
\]
respectively. But Theorem \ref{th:NonabelianHodgeTheoremGTorsors} establishes an equivalence between the first of these and the full subcategory of the second on the degree zero Higgs $\GG_m$-torsors  (the semistability condition is trivial in this case, as is the vanishing of the second rational Chern class: see below). This shows that any relation between $\Map\!\big(X_\mathrm{dR}, B^2\GG_m\big)$ and $\Map\!\big(X_\mathrm{Dol}, B^2\GG_m\big)$ would have to involve restrictions not only on objects, but also on 1-morphisms.

\begin{proposition} \label{proposition:HodgeCorrespondenceGerbes}
Suppose $A \cong \GG_a^{\oplus m} \oplus F$, where $F$ is a finite group. Then,
\[
\Map\!\big( X_\mathrm{dR}, B^2 A \big) \simeq \Map\!\big( X_\mathrm{Dol}, B^2 A \big).
\]
\end{proposition}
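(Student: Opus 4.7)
The plan is to split $A$ into its components and handle each separately, since delooping commutes with finite products: writing $A \cong \GG_a^{\oplus m} \oplus F$, it suffices to establish the equivalence for $A = \GG_a$ and for $A = F$ a finite abelian group (the case of $A = \GG_a$ being treated $m$-fold).

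For $A = F$ a finite (hence discrete) group, I would argue exactly as in the proof of Lemma \ref{lemma:CohomologyDolbeaultStack}: since $F \simeq \mathrm{dis}(F)$ and $\mathrm{const}$ admits $\Pi_\infty$ as a pro-left adjoint, mapping spaces into $B^2F$ depend only on the fundamental pro-$\infty$-groupoid of the source. So the task reduces to producing an equivalence $\Pi_\infty X_\mathrm{dR} \simeq \Pi_\infty X \simeq \Pi_\infty X_\mathrm{Dol}$. The second equivalence is proved in Lemma \ref{lemma:CohomologyDolbeaultStack}; the first goes through verbatim, substituting the infinitesimal neighborhoods of the diagonal in $X \times X$ for those of the zero section in $TX$. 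The crucial point, in both cases, is that after applying the reduction functor $j^\ast = \mathrm{red}$ these infinitesimal thickenings collapse to $X$, and $\Pi_{\infty\mathrm{red}} \circ j^\ast \simeq \Pi_\infty$ by \eqref{eq:FundamentalProInftyGroupoidAndReduction}.

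For $A = \GG_a$, the argument is of a fundamentally different flavor and is where classical Hodge theory enters. The mapping spaces $\Map(X_\mathrm{dR}, B^2 \GG_a)$ and $\Map(X_\mathrm{Dol}, B^2 \GG_a)$ are $\CC$-linear Eilenberg--MacLane objects: their underlying $\infty$-groupoids are determined, up to equivalence, by the complexes of $\CC$-vector spaces $R\Gamma(X, \mathrm{dR}^{\GG_a}_X)$ and $R\Gamma(X, \mathrm{Dol}^{\GG_a}_X)$, respectively, truncated appropriately. In the derived category of $\CC$-vector spaces every object splits as the direct sum of its cohomology groups in shifted degrees, so producing an equivalence of mapping spaces is equivalent to producing an isomorphism of hypercohomologies in every degree. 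The isomorphism $\mathbb{H}^k(X, \mathrm{dR}^{\GG_a}_X) \cong \mathbb{H}^k(X, \mathrm{Dol}^{\GG_a}_X)$ is precisely the content of the Hodge decomposition theorem, which is available because $X$ is smooth and projective: the Hodge-to-de Rham spectral sequence for $X$ degenerates at $E_1$, and both sides agree with $\bigoplus_{p+q=k} H^q(X, \Omega^p_X)$. Applying the lemmas of \S\ref{section:GerbesOverXdRAndXDol} then packages this into the desired equivalence.

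The main obstacle is the $\GG_a$ case, and within it, the need to upgrade an isomorphism of cohomology groups into an equivalence of full $\infty$-groupoids. The delicate point is that the equivalence will generally be non-canonical, since it depends on a choice of splitting of the Hodge filtration; in practice one fixes such a splitting coming from harmonic representatives and checks that it induces a morphism of sheaves of complexes whose effect on hypercohomology is the Hodge isomorphism. Semisimplicity of $\CC$-vector spaces then bridges the gap between quasi-isomorphism of complexes of sheaves and equivalence of mapping spaces into the Eilenberg--MacLane stack $B^2 \GG_a$. The finiteness hypothesis on $F$ and the absence of a $\GG_m$-summand in $A$ are both essential: the former guarantees that discrete coefficients see only $\Pi_\infty$, while the latter sidesteps the obstructions explained in the paragraph preceding the proposition.
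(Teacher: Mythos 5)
Your proof is correct and follows essentially the same strategy as the paper: split $A$ into its unipotent and finite parts, handle $\GG_a$ via abelian Hodge theory (the paper invokes this tersely; your observation that formality of complexes of $\CC$-vector spaces is what upgrades the isomorphism of hypercohomology groups to an equivalence of mapping spaces is exactly the hidden content), and handle the finite part by noting that discrete coefficients only see ordinary cohomology. The one minor variation is that you run the $\Pi_\infty$ argument of Lemma \ref{lemma:CohomologyDolbeaultStack} for $X_\mathrm{dR}$ as well, whereas the paper handles $X_\mathrm{dR}$ with discrete coefficients more directly through the adjunction $(-)_\mathrm{dR} \dashv \delta$ and Proposition \ref{proposition:Delta}; both routes work.
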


\begin{proof}
As in the proof of Lemma \ref{lemma:CohomologyDolbeaultStack}, it is enough to show the statement independently for the cases $A = \GG_a$ and $A = \mu_n$. The first follows from abelian Hodge theory, while the second holds true because $\mu_n$ has trivial Lie algebra, and hence
\begin{equation*}
\Map\!\big( X_\mathrm{dR}, B^2\mu_n \big) \simeq \Map\!\big( X, B^2\mu_n \big) \simeq \Map\!\big( X_\mathrm{Dol}, B^2\mu_n \big). \tag*{\qedhere}
\end{equation*}
\end{proof}

\subsubsection{}
\label{section:SemistabilityForAbelianTorsors}

Although Proposition \ref{proposition:HodgeCorrespondenceGerbes} concerns gerbes, it also contains within itself a statement about the category of $A$-torsors on $X_\mathrm{Dol}$, since the latter appears as the automorphism 1-category of the distinguished object of $\Map\!\big( X_\mathrm{Dol}, B^2 A \big)$. Namely, that the conditions on semistability and vanishing of Chern classes of Theorem \ref{th:NonabelianHodgeTheoremGTorsors} are always satisfied. As we remarked above, this is not quite so for $\GG_m$-torsors on $X_\mathrm{Dol}$: we do need to impose that they are of degree zero ---semistability follows from it.

Indeed, a $\GG_m$-torsor on $X_\mathrm{Dol}$, $\mathcal{L}$, sends an irreducible representation of $\GG_m$ ---that is, a character--- to a Higgs line bundle, for which the semistability condition is obviously vacuous, as is the vanishing of the second Chern class. If we require that the first rational Chern class of these line bundles coming from irreps also vanishes, then $\mathcal{L}$ sends an arbitrary representation of $\GG_m$ to a direct sum of degree zero Higgs line bundles, which is certainly semistable.

We can also prove the claim for $A = \mu_n$ and $A = \GG_a$ without invoking the Hodge correspondence for gerbes. In the first case we can use the same argument of the last paragraph, complemented with the observation that the Higgs line bundles we obtain now are torsion, and so their first rational Chern class vanishes automatically.

For $A=\GG_a$ the resulting vector bundles are unipotent, and the Higgs fields on them, nilpotent. These Higgs bundles can hence be written as successive extensions of the trivial line bundle equipped with the zero Higgs field. We finish by observing that the category of semistable Higgs bundles with vanishing first and second rational Chern classes is closed under extensions.

\subsection{Analytification}
\label{section:Analytification}

The main purpose of this section is to show that the Hodge correspondences ---Theorem \ref{th:NonabelianHodgeTheoremGTorsors} and Proposition \ref{proposition:HodgeCorrespondenceGerbes}--- also hold in the algebraic setting under appropriate assumptions. Our results here will allow us to work in \S\ref{section:Proof} without explicit reference to the {\' e}tale or analytic topologies (see \S\ref{section:StatementMainTheorems}).

\subsubsection{Finiteness conditions on the algebraic side.}

Let $a: \mathsf{Aff}_{\CC, \text{ft}} \hookrightarrow \mathsf{Aff}_\CC$ denote the inclusion functor of the category of complex affine schemes of finite type into that of all complex affine schemes. With respect to the {\' e}tale topologies on both sides, $a$ is both continuous and cocontinuous and preserves finite limits, and hence the triple of adjoint functors $a_! \dashv a^\ast \dashv a_\ast$ between categories of presheaves of sets descends to their respective categories of sheaves of sets, and ultimately lifts to their 1-localic $\infty$-topoi of $\infty$-stacks via Proposition \ref{proposition:LiftingGeometricMorphisms}:
\[
\begin{tikzpicture}[x=120pt]
  \node (1) at (0,0.5) {$\mathsf{St}(\mathsf{Aff}_{\CC, \mathrm{ft}}, \text{{\' e}t})$};
  \node (2) at (1,0.5) {$\mathsf{St}(\mathsf{Aff}_\CC, \text{{\' e}t})$};
  \path[semithick,->]
    (2.west) edge node[descr,pos=.5,scale=0.75] {$a^\ast$} (1.east);
  \path[semithick,{Hooks[right]}->]
    ([yshift=-10pt]1.east) edge node[descr,pos=.5,scale=0.75] {$a_\ast$} ([yshift=-10pt]2.west)
    ([yshift=10pt]1.east) edge node[descr,pos=.5,scale=0.75] {$a_!$} ([yshift=10pt]2.west);
\end{tikzpicture}
\]
An object $\mathfrak{X} \in \mathsf{St}(\mathsf{Aff}_\CC, \text{{\' e}t})$ is said to be \emph{almost locally of finite type} \cite{NotesGL_Stacks} if it belongs to the essential image of $a_!$. Because $a_!$ preserves $\infty$-colimits and finite $\infty$-limits, it is easy to see from their constructions that the de Rham and Dolbeault stacks of a smooth projective variety do belong in this subcategory, as is the case for linear algebraic groups over $\CC$.

\subsubsection{The analytification functor.}

Given a complex affine scheme \emph{of finite type}, we can produce a complex analytic space {\` a} la Serre \cite{MR0082175} in a functorial manner. This analytification functor, $\mathrm{an}: \mathsf{Aff}_{\CC, \mathrm{ft}} \to \mathsf{An}_\CC$, is continuous with respect to the {\' e}tale topologies on both sides and preserves finite limits; the same procedure we have used several times now produces an adjoint pair
\[
\begin{tikzpicture}[x=100pt]
  \node (1) at (0,0.5) {$\mathsf{St}(\mathsf{Aff}_{\CC, \mathrm{ft}}, \text{{\' e}t})$};
  \node (2) at (1,0.5) {$\mathsf{St}(\mathsf{An}_\CC, \text{{\' e}t})$};
  \path[semithick,->]
    ([yshift=-5pt]2.west) edge node[descr,pos=.5,scale=0.75] {$\mathrm{an}^\ast$} ([yshift=-5pt]1.east)
    ([yshift=5pt]1.east) edge node[descr,pos=.5,scale=0.75] {$\mathrm{an}$} ([yshift=5pt]2.west);
\end{tikzpicture}
\]
The left adjoint, which we have denoted $\mathrm{an}$ abusing terminology, is induced by $\mathrm{an}_!$ and extends the original analytification functor to all those $\infty$-stacks on the {\' e}tale site that are almost locally of finite type.

Given a smooth complex projective variety $X$, denote by $X_\mathrm{dR}$ (resp. $X_\mathrm{Dol}$) its de Rham (resp. Dolbeault) stack as constructed in the {\' e}tale topology following the recipe of \S\ref{section:ClassicalNonabelianHodgeCorrespondence}. Considering $X$ as a complex-analytic space through the analytification functor above, we can also look at its de Rham (resp,. Dolbeault) stack, this time constructed in the analytic sense; we temporarily denote the latter by $X_\mathrm{dR}^\mathrm{an}$ (resp., $X_\mathrm{Dol}^\mathrm{an}$). Because analytification commutes with $\infty$-colimits and finite $\infty$-limits, we have
\[
\mathrm{an}(X_\mathrm{dR}) \simeq X_\mathrm{dR}^\mathrm{an} \qquad \Big(\text{resp., }\mathrm{an}(X_\mathrm{Dol}) \simeq X_\mathrm{Dol}^\mathrm{an}\Big)
\]
Moreover, if $G$ is a linear algebraic group over $\CC$ (resp. $A$ is an abelian linear algebraic group over $\CC$), considered as an {\' e}tale scheme, denote by $G^\mathrm{an}$ (resp. $A^\mathrm{an}$) its analytification.

The following proposition expresses the fact that the categories we are interested in are the same in the algebraic and analytic cases, so that Theorem \ref{th:NonabelianHodgeTheoremGTorsors} and Proposition \ref{proposition:HodgeCorrespondenceGerbes} also hold in the {\' e}tale topology.

\begin{proposition}
\label{proposition:AlgebraicHodgeCorrespondence}
For $G$ a reductive linear algebraic group over $\CC$, and $A \cong \GG_a^{\oplus m} \oplus F$, where $F$ is a finite group, we have
\begin{enumerate}[topsep=-3pt,partopsep=0pt,parsep=2pt,itemsep=2pt]
\item $\Map\!\big( X_\mathrm{dR}, BG \big) \simeq \Map\!\big( X_\mathrm{dR}^{an}, BG^{an} \big)$
\item $\Map\!\big( X_\mathrm{Dol}, BG \big) \simeq \Map\!\big( X_\mathrm{Dol}^{an}, BG^{an} \big)$
\item $\Map\!\big( X_\mathrm{dR}, B^k A \big) \simeq \Map\!\big( X_\mathrm{dR}^{an}, B^k A^{an} \big)$
\item $\Map\!\big( X_\mathrm{Dol}, B^k A \big) \simeq \Map\!\big( X_\mathrm{Dol}^{an}, B^k A^{an} \big)$
\end{enumerate}
\end{proposition}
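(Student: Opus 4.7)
The plan is to establish all four equivalences by analyzing the canonical analytification morphism in each case. Recall that $BG$ is $1$-truncated and $B^k A$ is $k$-truncated (for $G$ and $A$ ordinary linear algebraic groups), so each of the four maps induced by the analytification functor
\[
\mathrm{an}: \Map\!\big(X_\diamond,\, Z\big) \longrightarrow \Map\!\big(X_\diamond^\mathrm{an},\, Z^\mathrm{an}\big), \qquad \diamond \in \{\mathrm{dR}, \mathrm{Dol}\}, \;\; Z \in \{BG, B^k A\},
\]
will be an equivalence precisely when it induces isomorphisms on all homotopy groups. This reduces the four statements to (co)homological comparison theorems.

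For the abelian statements (3) and (4), I would first use the decomposition $A \cong \GG_a^{\oplus m} \oplus F$ together with the compatibility of delooping and finite products (compare the proof of Lemma \ref{lemma:CohomologyDolbeaultStack}) to reduce to the cases $A = \GG_a$ and $A = F$ finite. The homotopy groups of the mapping spaces are computed via the formulas $\pi_i \Map\!\big(X_\mathrm{dR}, B^k A\big) \cong \mathbb{H}^{k-i}\big(X, \mathrm{dR}^A_X\big)$ and $\pi_i \Map\!\big(X_\mathrm{Dol}, B^k A\big) \cong \mathbb{H}^{k-i}\big(X, \mathrm{Dol}^A_X\big)$ established earlier. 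For $A = \GG_a$, the de Rham side follows from Grothendieck's classical comparison theorem for algebraic versus analytic de Rham cohomology of smooth projective varieties, while the Dolbeault side follows from Serre's GAGA applied to each $\Omega^p_X$ (recall that $\mathrm{Dol}^{\GG_a}_X$ has trivial differentials, so its hypercohomology decomposes as a direct sum of Hodge groups $H^q(X, \Omega^p_X)$). For $A = F$ finite one has $\mathfrak{f} = 0$, so both $\mathrm{dR}^F_X$ and $\mathrm{Dol}^F_X$ degenerate to the constant sheaf $F$ in degree zero; the required identification $H^k_{\text{{\'e}t}}(X, F) \cong H^k(X^\mathrm{an}, F)$ is Artin's comparison theorem for constructible torsion sheaves.

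For the torsor statements (1) and (2), I would invoke the Tannakian formalism to reduce to $G = GL_n$. Since $G$ is reductive, a $G$-torsor on any reasonable stack $\mathfrak{X}$ is equivalent to a symmetric monoidal, exact, faithful functor from $\operatorname{Rep}(G)$ to the symmetric monoidal category of vector bundles on $\mathfrak{X}$. Moreover, for reductive $G$ one has $\operatorname{Rep}(G) \simeq \operatorname{Rep}(G^\mathrm{an})$ (algebraic representations coincide with finite-dimensional holomorphic representations), so the problem reduces to showing that analytification induces an equivalence between vector bundles on $X_\mathrm{dR}$ (respectively $X_\mathrm{Dol}$) and vector bundles on $X_\mathrm{dR}^\mathrm{an}$ (respectively $X_\mathrm{Dol}^\mathrm{an}$). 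For the de Rham stack, these are flat vector bundles on $X$, and the desired equivalence is a GAGA-type statement for $\O_X$-coherent $\mathcal{D}$-modules on the smooth projective variety $X$, obtained from Serre's GAGA by noting that both the underlying coherent sheaf and the flat connection (a morphism of coherent sheaves) are preserved. For the Dolbeault stack, these are Higgs bundles, and GAGA applied to $E$ and to $\sEnd(E) \otimes \Omega^1_X$ yields the equivalence. The higher homotopy groups of the mapping spaces correspond to $\mathrm{Ext}$-groups in the relevant categories of coherent sheaves with extra structure, and are again handled by GAGA.

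The main obstacle will be upgrading the Tannakian equivalence in the previous paragraph from a statement about equivalence classes of objects to an equivalence of full mapping $\infty$-categories, compatibly with analytification on both sides. A related, though manageable, subtlety is that $X_\mathrm{dR}$ and $X_\mathrm{Dol}$ are non-representable: they are presented as $\infty$-colimits of formal groupoids on $X$ (as in \S\ref{section:ClassicalNonabelianHodgeCorrespondence}), so the comparison of algebraic and analytic coherent sheaves on them is not a direct invocation of GAGA but rather follows by applying GAGA level-wise to the groupoid presentations and using that $\mathrm{an}$ commutes with the relevant colimits and finite limits.
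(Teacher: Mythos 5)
Your treatment of parts (3) and (4) essentially matches the paper's proof: decompose $A$ into $\GG_a$ and finite pieces, then invoke Grothendieck's algebraic-versus-analytic de Rham comparison and Serre's GAGA for the $\GG_a$ de Rham and Dolbeault computations, and Artin's comparison theorem for \'etale and analytic cohomology with finite coefficients.

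For parts (1) and (2) you take a genuinely different route. The paper does not argue through Tannakian reconstruction at all: for (i) it cites Simpson's GAGA theorem for principal bundles with flat connection directly (\cite[Theorem~9.2]{MR1397992}), and for (ii) it observes tersely that GAGA forces analytic Higgs bundles (and hence, implicitly via an embedding into $GL_n$ or a Tannakian argument, Higgs $G$-torsors) to be algebraic. Your Tannakian reduction is a reasonable alternative and perhaps more self-contained, and it is in the spirit of how the paper itself derives the Hodge theorem for torsors from the vector bundle case in \S\ref{section:GroupsOtherThanGLn}. But it is also doing more work than the paper, and it forces you to reprove (rather than cite) the vector-bundle GAGA statements on $X_\mathrm{dR}$ and $X_\mathrm{Dol}$.

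This is where there is a genuine gap. You justify the de Rham step by saying the desired comparison is "obtained from Serre's GAGA by noting that both the underlying coherent sheaf and the flat connection (a morphism of coherent sheaves) are preserved." A connection $\nabla\colon E \to \Omega^1_X \otimes E$ is not $\O_X$-linear, so it is not a morphism of coherent sheaves and GAGA does not apply to it directly; this is precisely what separates the Dolbeault case (where a Higgs field \emph{is} $\O_X$-linear and GAGA does apply as you describe) from the de Rham case. To repair the argument you would need an extra step, for instance: the Atiyah class of the algebraization $E$ of $E^{\mathrm{an}}$ vanishes (since it does analytically and GAGA identifies $H^1(X,\Omega^1_X\otimes\sEnd E)$ with its analytic counterpart), so $E$ admits some algebraic connection $\nabla_0$; then $\nabla^{\mathrm{an}} - \nabla_0^{\mathrm{an}}$ \emph{is} $\O$-linear and GAGA algebraizes it; flatness of the resulting algebraic connection and full faithfulness then follow from GAGA and Grothendieck's de Rham comparison. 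This is exactly the content of the Deligne--Simpson de Rham GAGA result that the paper cites, which is why the paper sidesteps the issue with a reference rather than reproving it.
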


\begin{proof}
For a proof of (i), we refer the reader to \cite[Theorem 9.2]{MR1397992}. Statement (ii) follows from Serre's GAGA \cite{MR0082175}, which implies that analytic Higgs vector bundles are in fact algebraic.

For the case $A = \GG_a$, (iii) follows from the usual comparison theorem of de Rham cohomology in the algebraic and analytic settings \cite{MR0269663}, while (iv) is once again a direct consequence of GAGA. The comparison theorem between {\' e}tale and analytic cohomology \cite[Theorem 3.12]{MR559531} proves both (iii) and (iv) in case $A$ is a finite abelian group.
\end{proof}

\section{The twisted correspondence}
\label{section:TwistedCorrespondence}

\subsection{Torsion phenomena in the vector bundle case}
\label{section:TorsionPhenomena}

With all the technical baggage of the last two sections under our belt, we return here to the case of vector bundles ---or, rather, $GL_n$-torsors. Our discussion in this section will serve to illustrate the chief difficulty in the obvious approach to proving a twisted nonabelian Hodge correspondence and point out the main idea of the workaround.

\subsubsection{}

Recall that in \S\ref{section:Introduction} we defined twisted vector bundles, twisted connections and twisted Higgs fields in terms of a cover $\mathfrak{U}$ of our smooth complex projective variety $X$. We condensed all of these concepts into a few definitions, that we detail in the following table.

\vspace{1.5em}
\begin{center}
\begin{tabular}{cc}
\toprule
de Rham side & Dolbeault side \\
\midrule
\begin{tabular}{c}
Flat $\mathfrak{U}$-$\GG_m$-gerbe \\ (Definition \ref{definition:FlatGmGerbeCech})
\end{tabular}
&
\begin{tabular}{c}
Higgs $\mathfrak{U}$-$\GG_m$-gerbe \\ (Definition \ref{definition:HiggsGmGerbeCech})
\end{tabular}
\\
\begin{minipage}{160pt}
\mbox{}
\end{minipage}
&
\begin{minipage}{160pt}
\mbox{}
\end{minipage}
\\
\begin{tabular}{c}
Basic vector bundle on \\ a flat $\mathfrak{U}$-$\GG_m$-gerbe \\ (Definition \ref{definition:BasicVectorBundleOverFlatGmGerbeCech})
\end{tabular}
&
\begin{tabular}{c}
Basic vector bundle on \\ a Higgs $\mathfrak{U}$-$\GG_m$-gerbe \\ (Definition \ref{definition:BasicVectorBundleOverHiggsGmGerbeCech})
\end{tabular}
\\
\bottomrule
\end{tabular}
\end{center}
\vspace{1.5em}

Our work in the \S\ref{section:GeometrizingTwistedTorsors} and \S\ref{section:HodgeTheory} was aimed at giving cover- and cocycle-independent versions of all of these definitions. We remarked at the end of \S\ref{section:GerbesOverXdRAndXDol} that we had succeeded in doing so for the objects in the first row of the table above. The appropriate counterparts to the definitions in the second row are simply given by Definition \ref{definition:BasicHTorsorOverGerbe} applied to  Definitions \ref{definition:FlatGerbe} and \ref{definition:HiggsGerbe}.

\vspace{1.5em}
\begin{center}
\begin{tabular}{cc}
\toprule
de Rham side & Dolbeault side \\
\midrule
\begin{tabular}{c}
Flat $\GG_m$-gerbe \\ (Definition \ref{definition:FlatGerbe}, $A = \GG_m$)
\end{tabular}
&
\begin{tabular}{c}
Higgs $\GG_m$-gerbe \\ (Definition \ref{definition:HiggsGerbe}, $A = \GG_m$)
\end{tabular}
\\
\begin{minipage}{160pt}
\mbox{}
\end{minipage}
&
\begin{minipage}{160pt}
\mbox{}
\end{minipage}
\\
\begin{tabular}{c}
Basic $GL_n$-torsor on \\ a flat $\GG_m$-gerbe \\ (Definition \ref{definition:BasicHTorsorOverGerbe}, $A = \GG_m$, $H = GL_n$)
\end{tabular}
&
\begin{tabular}{c}
Basic $GL_n$-torsor on \\ a Higgs $\GG_m$-gerbe \\ (Definition \ref{definition:BasicHTorsorOverGerbe}, $A = \GG_m$, $H = GL_n$)
\end{tabular}
\\
\bottomrule
\end{tabular}
\end{center}
\vspace{1.5em}

The relation between these two tables is that between {\v C}ech and sheaf cohomology. 
Indeed, suppose $\big( \underline{\alpha}, \underline{\omega}, \underline{F} \big) \in \check{\ZZ}^2 \big( \mathfrak{U}, \mathrm{dR}^{\GG_m}_X \big)$ is a flat $\mathfrak{U}$-$\GG_m$-gerbe, and let $\theta \in \Map\!\big( X_\mathrm{dR}, B^2 \GG_m \big)_0$ be a flat $\GG_m$-gerbe. Then, if the obvious compatibility condition between these two pieces of data ---namely, that the image of $\left[\big( \underline{\alpha}, \underline{\omega}, \underline{F} \big)\right] \in \check{\HH}^2 \big( \mathfrak{U}, \mathrm{dR}^{\GG_m}_X \big)$ in sheaf cohomology coincides with $\left[ \theta \right] \in \HH^2 \big( X, \mathrm{dR}^{\GG_m}_X \big)$---, the category of basic vector bundles on $\big( \underline{\alpha}, \underline{\omega}, \underline{F} \big)$ is equivalent to that of basic $GL_n$-torsors on ${}_\theta (X_\mathrm{dR})$. Of course, the parallel statement about basic vector bundles on Higgs $\mathfrak{U}$-$\GG_m$-gerbes and basic $GL_n$-torsors on Higgs $\GG_m$-gerbes also holds.

Theorem \ref{theorem:MainTheoremGLnCech} follows then from what will be the final form of our theorem for the case of vector bundles.

\begin{theorem}
\label{theorem:MainTheoremGLn}
Let $\theta \in \Map\!\big(X_\mathrm{dR}, B^2\GG_m\big)_0$ be a flat $\GG_m$-gerbe over $X$. Then there is a Higgs $\GG_m$-gerbe over X, $\widetilde\theta  \in \Map\!\big(X_\mathrm{Dol}, B^2\GG_m\big)_0$, for which there is an equivalence
\[
\Map_{B\GG_m} \!\big( {}_\theta(X_\mathrm{dR}), BGL_n \big) \simeq \Map_{B\GG_m} \!\big( {}_{\widetilde\theta}(X_\mathrm{Dol}), BGL_n \big)^\mathrm{ss}
\]
Conversely, given $\widetilde\theta  \in \!\Map\big(X_\mathrm{Dol}, B^2\GG_m\big)_0$ we can find $\theta \in \!\Map\big(X_\mathrm{dR}, B^2\GG_m\big)_0$ such that the same conclusion holds.
\end{theorem}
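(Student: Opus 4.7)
The plan is to reduce the theorem to two ingredients already available to us: Theorem \ref{th:NonabelianHodgeTheoremGTorsors} applied to the reductive group $PGL_n$, and Proposition \ref{proposition:HodgeCorrespondenceGerbes} applied to the finite group $\mu_n$. The central extension $1 \to \GG_m \to GL_n \to PGL_n \to 1$, together with the limit presentation \eqref{eq:BasicHTorsors} of basic torsors, lets me rewrite both sides of the desired equivalence as homotopy fibers,
\[
\Map_{B\GG_m}\!\big( {}_\theta(X_\mathrm{dR}), BGL_n \big) \;\simeq\; \mathrm{fib}_\theta\!\big( ob_{\GG_m}^{\mathrm{dR}} \big), \qquad ob_{\GG_m}^{\mathrm{dR}} : \Map(X_\mathrm{dR}, BPGL_n) \longrightarrow \Map(X_\mathrm{dR}, B^2\GG_m),
\]
and analogously on the Dolbeault side. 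The task therefore becomes to produce a commutative square relating the two obstruction maps, and to use it to transport $\theta$ to an appropriate $\widetilde\theta$.

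The key observation, already implicit in the torsion remarks of \S\ref{section:RemarksAboutTorsionAndProjectivization} and \S\ref{section:TorsionPhenomena}, is that $ob_{\GG_m}$ is never surjective on $\pi_0$: every $PGL_n$-torsor admits an $SL_n$-lift locally, so its obstruction to a global $GL_n$-lift is $n$-torsion. Concretely, the Kummer sequence $1 \to \mu_n \to \GG_m \to \GG_m \to 1$ ($n$-th power map) together with the identifications $PGL_n = SL_n/\mu_n = GL_n/\GG_m$ yield a canonical factorization $ob_{\GG_m} \simeq \iota_\ast \circ ob_{\mu_n}$, where $\iota : \mu_n \hookrightarrow \GG_m$ and $ob_{\mu_n} : \Map(\mathfrak{X}, BPGL_n) \to \Map(\mathfrak{X}, B^2\mu_n)$ is the obstruction to lifting a $PGL_n$-torsor to an $SL_n$-torsor. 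In particular, the theorem is vacuously true unless $[\theta]$ is $n$-torsion, in which case it lifts to some class $\tilde\theta \in \Map(X_\mathrm{dR}, B^2\mu_n)_0$.

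Now both Hodge correspondences come into play. Applying Theorem \ref{th:NonabelianHodgeTheoremGTorsors} to the reductive group $PGL_n$ produces $\Map(X_\mathrm{dR}, BPGL_n) \simeq \Map(X_\mathrm{Dol}, BPGL_n)^{\mathrm{ss},0}$, while Proposition \ref{proposition:HodgeCorrespondenceGerbes} applied to the finite group $\mu_n$ gives $\Map(X_\mathrm{dR}, B^2\mu_n) \simeq \Map(X_\mathrm{Dol}, B^2\mu_n)$. Transporting $\tilde\theta$ through the second equivalence yields a class $\tilde\theta' \in \Map(X_\mathrm{Dol}, B^2\mu_n)_0$; setting $\widetilde\theta := \iota_\ast \tilde\theta'$ and taking homotopy fibers over $\theta$ and $\widetilde\theta$ of the factorized obstruction maps on each side then delivers the desired equivalence. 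The $\mathrm{ss}$ decoration on the Higgs side is inherited tautologically from Simpson's classical statement for $PGL_n$, and the vanishing of rational Chern classes is encoded implicitly in the prescribed $\mu_n$-obstruction class (see \S\ref{section:SemistabilityForAbelianTorsors}). The converse direction ---beginning with $\widetilde\theta \in \Map(X_\mathrm{Dol}, B^2\GG_m)_0$ and producing $\theta$--- is entirely parallel: lift to $B^2\mu_n$, transport back to the de Rham side, and push forward along $\iota_\ast$.

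The main obstacle will be verifying the compatibility of the two Hodge equivalences with $ob_{\mu_n}$, i.e.~the commutativity of the square with horizontal arrows $ob_{\mu_n}$ on the two sides and vertical arrows the two Hodge correspondences just invoked. Heuristically this should be automatic: $\mu_n$-cohomology is a purely topological invariant, both instances of $ob_{\mu_n}$ factor through the underlying topological $PGL_n$-bundle, and Simpson's correspondence preserves the underlying topological bundle. Upgrading this heuristic to an equivalence of mapping $\infty$-groupoids ---rather than merely a bijection on $\pi_0$--- is the technical crux, and will call for the formal machinery of the previous sections: one plausible route is to observe that $ob_{\mu_n}$ is induced by a map of $\infty$-stacks that factors through $\Pi_\infty$ on both sides, and then invoke the reduction computations of \S\ref{section:CaseOfSmoothProjectiveVariety} and \S\ref{section:Analytification} identifying the two copies of $\Map(-, B^2\mu_n)$.
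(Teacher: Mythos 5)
Your proposal correctly identifies the key idea---factoring $ob_{\GG_m}$ through $B^2\mu_n$ via $ob_{\GG_m} \simeq \iota_\ast \circ ob_{\mu_n}$ and exploiting that the Hodge correspondence for gerbes \emph{does} hold for the finite group $\mu_n$---which is exactly the strategy the paper pursues (see the end of \S\ref{section:TorsionPhenomena}). However, the argument as stated has a genuine gap at the step ``taking homotopy fibers over $\theta$ and $\widetilde\theta$ of the factorized obstruction maps on each side then delivers the desired equivalence.'' That step conflates two different fibers. After fixing a lift $\tilde\theta \in \Map(X_\mathrm{dR}, B^2\mu_n)_0$ of $\theta$, the fiber of $ob_{\mu_n}$ over $\tilde\theta$ is the category of basic $SL_n$-torsors on $_{\tilde\theta}(X_\mathrm{dR})$, \emph{not} the category of basic $GL_n$-torsors on $_\theta(X_\mathrm{dR})$. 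The latter is strictly larger: a basic $GL_n$-torsor can twist by a line bundle that has no square (or $n$-th) root, and such an object need not arise from an $SL_n$-torsor. Moreover, the fiber of the composite $\iota_\ast \circ ob_{\mu_n}$ over $\theta$ is not the fiber of $ob_{\mu_n}$ over a single chosen $\tilde\theta$; it is fibered over the whole space $\mathrm{fib}_\theta(\iota_\ast)$ of $\mu_n$-lifts of $\theta$, with fiber $\mathrm{fib}_{\tilde\theta}(ob_{\mu_n})$ over each lift.

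The paper fills this gap in two steps that your proposal omits. First, it constructs the multiplication map \eqref{eq:MultiplicationMap} and the rectifiability decomposition (Corollary \ref{Cor:UnionRectifiablePieces}), which express
\[
\Map_{B\GG_m}\!\big({}_\theta(X_\mathrm{dR}), BGL_n\big) \;\simeq\; \coprod_{[\theta']} \;\frac{\Map_{B\mu_n}\!\big({}_{\theta'}(X_\mathrm{dR}), BSL_n\big) \times \Map\!\big(X_\mathrm{dR}, B\GG_m\big)}{\Map\!\big(X_\mathrm{dR}, B\mu_n\big)},
\]
where the coproduct runs over equivalence classes of lifts. The $\Map(X_\mathrm{dR}, B\GG_m)$ factor is precisely what accommodates the extra line-bundle twists; comparing it with $\Map(X_\mathrm{Dol}, B\GG_m)^0$ under Simpson's theorem is what forces the degree-zero condition on the line-bundle part of the semistability Definition \ref{definition:SemistableRectifiableBasicHTorsor}---so the ``\textrm{ss}'' decoration is \emph{not} inherited tautologically from the $PGL_n$ statement alone. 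Second, Lemma \ref{lemma:CorrespondenceComponents} and the subsequent discussion show that the sets $L(X_\mathrm{dR})([\theta])$ and $L(X_\mathrm{Dol})([\widetilde\theta])$ of lifts are matched (after restricting to the image of the respective $ob_{\mu_n}$ maps), so that the coproduct decompositions on the two sides correspond term-by-term. Without these ingredients, the ``take homotopy fibers and identify'' step cannot be made precise, and the claimed equivalence would compare categories of different size.
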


The semistability conditions that define the right hand side of this correspondence as a full subcategory of the category of basic $GL_n$-torsors on ${}_{\widetilde\theta}(X_\mathrm{Dol})$ are rather difficult to state at this point. We will build the requisite language in \S\ref{section:ToruslessGerbes}, and elucidate them in \S\ref{section:LiftingFlatHiggsAGerbes} (Definitions \ref{definition:SemistableBasicHPrimeTorsor} and \ref{definition:SemistableRectifiableBasicHTorsor}).

\subsubsection{}
\label{section:ObviousAttempt}

Let $\theta \in \Map\!\big( X_\mathrm{dR}, B^2\GG_m \big)_0$ be a flat $\GG_m$-gerbe over $X$, and $\widetilde\theta \in \Map\!\big( X_\mathrm{Dol}, B^2\GG_m \big)_0$ a Higgs $\GG_m$-gerbe over the same variety. According to \eqref{eq:BasicHTorsors}, we can express the categories of basic $GL_n$-torsors on one and the other as limits:
\[
\Map_{B\GG_m} \!\big( {}_{\theta}(X_\mathrm{dR}), BGL_n \big) \simeq \lim \left\{
\begin{tikzpicture}[x=135pt,y=35pt,baseline={([yshift=-5pt]current bounding box.west)}]
  \node (12) at (1,1) {$\ast$};
  \node (21) at (0,0) {$\Map\!\big(X_\mathrm{dR}, B\PP GL_n\big)$};
  \node (22) at (1,0) {$\Map\!\big(X_\mathrm{dR}, B^2 \GG_m\big)$};
  \path[semithick,->]
    (12) edge[shorten <=-2pt] node[pos=0.4,scale=0.75,xshift=6pt] {$\theta$} (22)
    (21) edge node[pos=0.5,scale=0.75,yshift=8pt] {$ob_{\GG_m}$} (22);
\end{tikzpicture}
\right\}
\]
\[
\Map_{B\GG_m} \!\big( {}_{\widetilde\theta}(X_\mathrm{Dol}), BGL_n \big) \simeq \lim \left\{
\begin{tikzpicture}[x=135pt,y=35pt,baseline={([yshift=-5pt]current bounding box.west)}]
  \node (12) at (1,1) {$\ast$};
  \node (21) at (0,0) {$\Map\!\big(X_\mathrm{Dol}, B\PP GL_n\big)$};
  \node (22) at (1,0) {$\Map\!\big(X_\mathrm{Dol}, B^2 \GG_m\big)$};
  \path[semithick,->]
    (12) edge[shorten <=-2pt] node[pos=0.4,scale=0.75,xshift=6pt] {$\widetilde\theta$} (22)
    (21) edge node[pos=0.5,scale=0.75,yshift=8pt] {$ob_{\GG_m}$} (22);
\end{tikzpicture}
\right\}
\]
Notice how, as we mentioned several times in the introduction, a basic $GL_n$-torsor on ${}_{\theta}(X_\mathrm{dR})$ (resp., ${}_{\widetilde\theta}(X_\mathrm{Dol})$) determines an honest, untwisted $\PP GL_n$-torsor on $X_\mathrm{dR}$ (resp., $X_\mathrm{Dol}$).

The natural attempt at proving Theorem \ref{theorem:MainTheoremGLn} would be to try to relate the terms in one of these limits to the matching ones in the other in a functorial manner. One of the comparisons is easy: the classical nonabelian Hodge correspondence (Theorem \ref{th:NonabelianHodgeTheoremGTorsors}) provides an equivalence 
\[
\Map\!\big(X_\mathrm{dR}, B\PP GL_n\big) \simeq \Map\!\big(X_\mathrm{Dol}, B\PP GL_n\big)^{\mathrm{ss}, 0}
\]
between the category of flat $\PP GL_n$-torsors on $X$ and the full subcategory of the category of Higgs $\PP GL_n$-torsors on $X$ on the semistable objects with vanishing Chern numbers ---this is, in fact, one of the two stability conditions we will need to impose on the Dolbeault side of our correspondence. However, as we saw in \S\ref{section:HodgeCorrespondenceGerbes}, the Hodge correspondence fails for $\GG_m$-gerbes.

Observe, though, that the above presentation of $\Map_{B\GG_m} \!\big( {}_{\widetilde\theta}(X_\mathrm{Dol}), BGL_n \big)$ as a limit implies that automorphisms of $\Map\!\big(X_\mathrm{Dol}, B^2\GG_m\big)$ enter into it at the level of objects: restricting these would then be a restriction on objects of the category of basic $GL_n$-torsors on ${}_{\widetilde\theta}(X_\mathrm{Dol})$, and so the hope for the existence of a twisted nonabelian Hodge correspondence is not all lost.

\subsubsection{}

It is the torsion phenomena that we referred to in the introduction that allows us to bypass this difficulty; they all stem from the fact that the determinant map $\det : GL_n \to \GG_m$ is a surjective group homomorphism that remains surjective when restricted to its center. These surjectivity properties allow us to write the following commutative diagram of linear algebraic groups and homomorphisms with exact rows and columns:
\begin{equation} \label{eq:MasterDiagramGL_n}
\begin{gathered}
\begin{tikzpicture}[x=200pt, y=170pt]
  \node (12) at (0.2,1) {$1$};
  \node (13) at (0.4,1) {$1$};
  \node (21) at (0,0.8) {$1$};
  \node (22) at (0.2,0.8) {$\mu_n$};
  \node (23) at (0.4,0.8) {$SL_n$};
  \node (24) at (0.6,0.8) {$\PP GL_n$};
  \node (25) at (0.8,0.8) {$1$};
  \node (31) at (0,0.6) {$1$};
  \node (32) at (0.2,0.6) {$\GG_m$};
  \node (33) at (0.4,0.6) {$GL_n$};
  \node (34) at (0.6,0.6) {$\PP GL_n$};
  \node (35) at (0.8,0.6) {$1$};
  \node (42) at (0.2,0.4) {$\GG_m$};
  \node (43) at (0.4,0.4) {$\GG_m$};
  \node (52) at (0.2,0.2) {$1$};
  \node (53) at (0.4,0.2) {$1$};
  \draw[double distance=1.5pt] (24) -- (34);
  \draw[double distance=1.5pt] (42) -- (43);
  \path[semithick,->]
    (12) edge (22)
    (13) edge (23)
    (21) edge (22)
    (22) edge (23)
    (22) edge (32)
    (23) edge (24)
    (23) edge (33)
    (24) edge (25)
    (31) edge (32)
    (32) edge (33)
    (32) edge node[pos=0.45,scale=0.8,xshift=15pt] {$(-)^n$} (42)
    (33) edge (34)
    (33) edge node[pos=0.45,scale=0.8,xshift=11pt] {$\det$} (43)
    (34) edge (35)
    (42) edge (52)
    (43) edge (53);
\end{tikzpicture}
\end{gathered}
\end{equation}
From the Puppe sequences associated to the exact sequences above, we deduce a diagram of $\infty$-stacks, in which the rows and the column are fiber sequences:
\begin{equation} \label{eq:TorsionDiagramGL_n}
\begin{gathered}
\begin{tikzpicture}[x=160pt, y=75pt]
  \node (11) at (0,1) {$BSL_n$};
  \node (12) at (0.5,1) {$B\PP GL_n$};
  \node (13) at (1,1) {$B^2\mu_n$};
  \node (21) at (0,0.5) {$BGL_n$};
  \node (22) at (0.5,0.5) {$B\PP GL_n$};
  \node (23) at (1,0.5) {$B^2\GG_m$};
  \node (33) at (1,0) {$B^2\GG_m$};
  \draw[double distance=1.5pt] (12) -- (22);
  \path[semithick,->]
    (11) edge (12)
    (11) edge (21)
    (12) edge node[pos=0.5,scale=0.8,yshift=10pt] {$ob_{\mu_n}$} (13)
    (13) edge node[pos=0.45,scale=0.8,xshift=17pt] {$ob_{\GG_m}^{\mu_n}$} (23)
    (21) edge (22)
    (22) edge node[pos=0.5,scale=0.8,yshift=10pt] {$ob_{\GG_m}$} (23)
    (23) edge node[pos=0.45,scale=0.8,xshift=15pt] {$(-)^n$} (33);
\end{tikzpicture}
\end{gathered}
\end{equation}
The second named horizontal map is the universal obstruction $\GG_m$-gerbe for $\PP GL_n$-torsors, in the sense that, for every $\infty$-stack $\mathfrak{X}$, it induces the map that associates to a $\PP GL_n$-torsor on $\mathfrak{X}$ its obstruction $\GG_m$-gerbe (see \S\ref{section:ObstructionGerbes}). The fact that this map factors through $B^2\mu_n$ shows that the $n$th-power of the obstruction $\GG_m$-gerbe of a $\PP GL_n$-torsor is always trivializable.

\begin{lemma} \label{lemma:NoBasicGLnTorsorsForNonTorsionGerbes}
Let $\alpha \in \Map\!\big( \mathfrak{X}, B^2 \GG_m \big)_0$ be a $\GG_m$-gerbe over an $\infty$-stack $\mathfrak{X}$. Then, the category of basic $GL_n$-torsors on ${}_\alpha \mathfrak{X}$ is empty unless $\alpha^n$ is a trivializable $\GG_m$-gerbe.
\end{lemma}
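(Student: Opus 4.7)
The plan is to read the lemma off directly from the limit presentation \eqref{eq:BasicHTorsors} together with the fiber-sequence data encoded in diagram \eqref{eq:TorsionDiagramGL_n}. Specializing \eqref{eq:BasicHTorsors} to $A = \GG_m$, $H = GL_n$, $K = \PP GL_n$ exhibits $\Map_{B\GG_m}\!\big({}_\alpha\mathfrak{X}, BGL_n\big)$ as the homotopy pullback of $\Map(\mathfrak{X}, B\PP GL_n) \xrightarrow{ob_{\GG_m}} \Map(\mathfrak{X}, B^2\GG_m) \xleftarrow{\alpha} \ast$. If this category is nonempty, its $\pi_0$ contains at least one vertex, and hence we obtain a $\PP GL_n$-torsor $P$ on $\mathfrak{X}$ together with an equivalence $ob_{\GG_m}([P]) \simeq \alpha$ in $\Map(\mathfrak{X}, B^2\GG_m)$.

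Next I would appeal to the column and top row of \eqref{eq:TorsionDiagramGL_n}. The column is precisely the Puppe fiber sequence $B^2\mu_n \to B^2\GG_m \xrightarrow{(-)^n} B^2\GG_m$ associated to $1 \to \mu_n \to \GG_m \xrightarrow{(-)^n} \GG_m \to 1$, so the composite $(-)^n \circ ob_{\GG_m}^{\mu_n}$ is canonically null. Combined with the factorization $ob_{\GG_m} \simeq ob_{\GG_m}^{\mu_n} \circ ob_{\mu_n}$ read off from the top row, this shows that the composite $(-)^n \circ ob_{\GG_m} : B\PP GL_n \to B^2\GG_m$ is canonically nullhomotopic. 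Post-composing the equivalence $ob_{\GG_m}([P]) \simeq \alpha$ with the $n$-th power map and using functoriality of $(-)^n$ on $\Map(\mathfrak{X}, B^2\GG_m)$ then produces a canonical equivalence
\[
\alpha^n \;\simeq\; \bigl(ob_{\GG_m}([P])\bigr)^n \;\simeq\; \ast,
\]
which is exactly the statement that $\alpha^n$ is a trivializable $\GG_m$-gerbe.

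There is no real obstacle in this argument: it is a purely formal consequence of the universal property of the pullback defining basic torsors and of the commutativity/exactness in \eqref{eq:TorsionDiagramGL_n}. The only point that deserves explicit mention in the write-up is the identification of the middle vertical arrow in \eqref{eq:MasterDiagramGL_n} with the $n$-th power map $\GG_m \to \GG_m$, which is forced by the fact that the restriction of $\det : GL_n \to \GG_m$ to the central subgroup $\GG_m \hookrightarrow GL_n$ sends $z$ to $z^n$; this is what makes the column a short exact sequence and hence guarantees the triviality of $(-)^n \circ ob_{\GG_m}$ at the level of deloopings.
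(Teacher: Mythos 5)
Your proof is correct and takes essentially the same approach as the paper's (the paper phrases it contrapositively in one line: if $\alpha^n$ is not trivializable, then $\alpha$ and the image of $ob_{\GG_m}$ lie in different connected components of $\Map(\mathfrak{X}, B^2\GG_m)$, so the pullback is empty). You simply make explicit the underlying reason the image of $ob_{\GG_m}$ consists of gerbes with trivializable $n$-th power, by tracing through the factorization $ob_{\GG_m} \simeq ob_{\GG_m}^{\mu_n}\circ ob_{\mu_n}$ and the fiber sequence $B^2\mu_n \to B^2\GG_m \xrightarrow{(-)^n} B^2\GG_m$ from \eqref{eq:TorsionDiagramGL_n}.
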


\begin{proof}
If $\alpha^n$ is not trivializable, the image of the two morphisms into $\Map\!\big( \mathfrak{X}, B^2 \GG_m \big)$ in
\[
\Map_{B\GG_m} \!\big( {}_\alpha \mathfrak{X}, BGL_n \big) \simeq \lim \left\{
\begin{tikzpicture}[x=125pt,y=35pt,baseline={([yshift=-5pt]current bounding box.west)}]
  \node (12) at (1,1) {$\ast$};
  \node (21) at (0,0) {$\Map\!\big(\mathfrak{X}, B\PP GL_n\big)$};
  \node (22) at (1,0) {$\Map\!\big(\mathfrak{X}, B^2 \GG_m\big)$};
  \path[semithick,->]
    (12) edge[shorten <=-2pt] node[pos=0.4,scale=0.75,xshift=8pt] {$\alpha$} (22)
    (21) edge node[pos=0.5,scale=0.75,yshift=8pt] {$ob_{\GG_m}$} (22);
\end{tikzpicture}
\right\}
\]
land in different connected components.
\end{proof}

This provides a uniform explanation for all the occurrences of torsion in the introduction, from that of the bare, underlying $\GG_m$-gerbe to that of the twisted connections and twisted Higgs fields. It also suggests how we might get around the fact that the Hodge correspondence does not hold for $\GG_m$-gerbes: by using that it does for $\mu_n$-gerbes. Indeed, lifting $\theta \in \Map\!\big(X_\mathrm{dR}, B^2\GG_m\big)_0$ (resp., $\widetilde\theta \in \!\Map\big(X_\mathrm{Dol}, B^2\GG_m\big)_0$) to a $\mu_n$-gerbe $\theta' \in \Map\!\big(X_\mathrm{dR}, B^2\mu_n\big)_0$ (resp., $\widetilde\theta' \in \!\Map\big(X_\mathrm{Dol}, B^2\mu_m\big)_0$), the approach of \S\ref{section:ObviousAttempt} establishes an equivalence between
\[
\lim \left\{
\begin{tikzpicture}[x=135pt,y=35pt,baseline={([yshift=-5pt]current bounding box.west)}]
  \node (12) at (1,1) {$\ast$};
  \node (21) at (0,0) {$\Map\!\big(X_\mathrm{dR}, B\PP GL_n\big)$};
  \node (22) at (1,0) {$\Map\!\big(X_\mathrm{dR}, B^2 \mu_m\big)$};
  \path[semithick,->]
    (12) edge[shorten <=-2pt] node[pos=0.4,scale=0.75,xshift=8pt] {$\theta'$} (22)
    (21) edge node[pos=0.5,scale=0.75,yshift=8pt] {$ob_{\mu_m}$} (22);
\end{tikzpicture}
\right\}
\]
and a subcategory of
\[
\lim \left\{
\begin{tikzpicture}[x=135pt,y=35pt,baseline={([yshift=-5pt]current bounding box.west)}]
  \node (12) at (1,1) {$\ast$};
  \node (21) at (0,0) {$\Map\!\big(X_\mathrm{Dol}, B\PP GL_n\big)$};
  \node (22) at (1,0) {$\Map\!\big(X_\mathrm{Dol}, B^2 \mu_m\big)$};
  \path[semithick,->]
    (12) edge[shorten <=-2pt] node[pos=0.4,scale=0.75,xshift=8pt] {$\widetilde\theta'$} (22)
    (21) edge node[pos=0.5,scale=0.75,yshift=8pt] {$ob_{\mu_m}$} (22);
\end{tikzpicture}
\right\}
\]
It is now a matter of
\begin{itemize}[topsep=-3pt,partopsep=0pt,parsep=2pt,itemsep=0pt]
\item studying the set of possible liftings, and
\item establishing the relationship between these limits and that those defining the categories $\Map_{B\GG_m} \!\big( {}_{\theta}(X_\mathrm{dR}), BGL_n \big) $ and $\Map_{B\GG_m} \!\big( {}_{\widetilde\theta}(X_\mathrm{Dol}), BGL_n \big)$.
\end{itemize}
Before doing this in \S\ref{section:Proof}, we investigate how to generalize \eqref{eq:MasterDiagramGL_n} to groups other than $GL_n$.

\subsection{A digression on algebraic groups}
\label{section:DisgressionOnAlgebraicGroups}

\subsubsection{}
\label{section:LinearAlgebraicGroups}

In this section we deal with linear (equivalently, affine) algebraic groups over $\CC$. Their theory, as worked out in, e.g., \cite{MR0302656}, regards them as sheaves on the big fppf site of $\operatorname{Spec}\CC$. Our first observation says that we can lift the whole theory to the {\' e}tale topology.

\begin{lemma}
\label{lemma:EtaleQuotientAlgebraicGroups}
Let $G$ be a linear algebraic group over $\CC$, and $N$ a closed normal subgroup. Then,
\begin{enumerate}[topsep=-3pt,partopsep=0pt,parsep=2pt,itemsep=0pt]
\item $N$ is a linear algebraic group,
\item the fppf quotient $G/N$ is representable by a linear algebraic group, and
\item the fppf quotient $G/N$ is also a quotient in the {\' e}tale topology.
\end{enumerate}
\end{lemma}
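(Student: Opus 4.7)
The plan is to handle the three parts sequentially, with the bulk of the work going into part (iii), where we must compare the fppf and \'etale quotients.

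Part (i) is immediate: since $G$ is affine and $N \subset G$ is closed, the closed subscheme $N$ is itself affine, and the restricted group operations make it into a linear algebraic group.

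For part (ii), this is the classical representability theorem for quotients of an affine group scheme by a closed normal subgroup, as treated in, e.g., \cite{MR0302656}: the fppf sheafification of the presheaf quotient $T \mapsto G(T)/N(T)$ is represented by an affine group scheme of finite type over $\operatorname{Spec}\CC$, and the canonical projection $\pi: G \to G/N$ is faithfully flat and of finite presentation. I would simply invoke this, without attempting an independent proof.

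The substantive content is part (iii), and the key leverage is characteristic zero. By Cartier's theorem every linear algebraic group over $\CC$ is reduced and hence smooth, so $G$, $N$, and $G/N$ are all smooth. The map $\pi$ is a principal $N$-bundle in the fppf topology, so its fibers are torsors under $N$ and in particular smooth. A faithfully flat morphism of finite presentation between smooth schemes whose fibers are smooth is itself smooth, so $\pi$ is smooth. Now a smooth surjection admits sections \'etale-locally on the target, so $\pi$ is in fact a covering in the \'etale topology; the universal property then identifies the fppf sheaf quotient $G/N$ with the \'etale sheaf quotient.

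The main subtlety is precisely the appeal to characteristic zero at the smoothness step: in positive characteristic one can have $N$ non-smooth (e.g.\ infinitesimal subgroup schemes), in which case $\pi$ fails to be \'etale-locally trivial and the fppf quotient genuinely differs from the \'etale quotient. Over $\CC$ this obstruction is absent, and the argument goes through cleanly.
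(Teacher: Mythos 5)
Your proof is correct and takes essentially the same approach as the paper: parts (i) and (ii) are dispatched identically, and for part (iii) both arguments combine Cartier's theorem (smoothness of $N$, hence of the fibers of $G \to G/N$) with the fact that an fppf morphism with smooth fibers is smooth, and that smooth surjections admit sections \'etale-locally. Your extra hypothesis that source and target be smooth in the smoothness criterion is harmless but unnecessary, and your closing remark about positive characteristic is a nice clarification of where the argument would break down.
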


\begin{proof}
The first statement is obvious, since a closed subscheme of an affine scheme is affine. The second is \cite[III, \S 3, 5.6]{MR0302656}. To prove the third one, notice that the quotient map $G \to G/N$ is an fppf morphism \cite[III, \S 3, 2.5 a)]{MR0302656} with smooth fibers ---all of them are isomorphic to $N$, which is smooth by a theorem of Cartier's \cite[II, \S 6, 1.1 a)]{MR0302656}---, hence smooth; and smooth morphisms have sections {\' e}tale-locally.
\end{proof}

Should we want to stay on the algebraic ---by which of course we mean {\' e}tale--- side, the preceding lemma is all we need. If, however, we want to work in the analytic topology, we have to take one more step and use the analytification functor of \S\ref{section:Analytification}. But because the latter is exact and all of our constructions rely only on finite limits and colimits, the statements we use from their structure theory ---for which we refer to \cite{MR1102012} (see also \cite{milneAGS,milneRG})--- come through without any problem.

\subsubsection{}

Let $H$ be a \emph{connected} linear algebraic group over $\CC$, $A$ a closed subgroup contained in its center, and $K = H/A$. Because $A$ is abelian it decomposes as a product $A \cong F \times \GG_m^{\oplus r} \times \GG_a^{\oplus s}$ with $F$ finite abelian.

\begin{proposition}
\label{proposition:MasterDiagram}
There is a surjective homomorphism of linear algebraic groups $\kappa: H \to \GG_m^{\oplus r}$ such that the composition $A \hookrightarrow H \xrightarrow{\;\kappa\;} \GG_m^{\oplus r}$ is also surjective.
\end{proposition}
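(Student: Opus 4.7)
The plan is to reduce the statement to a lattice computation using standard structure theory for connected linear algebraic groups. Set $L := H/R_u(H)$, the reductive quotient, and let $T \cong \GG_m^{\oplus r} \subseteq A$ denote the maximal subtorus of $A$, which is canonical as the maximal torus of the commutative connected group $A^\circ$. Since $T$ is a torus and $R_u(H)$ is unipotent, $T \cap R_u(H) = 1$, so $T$ maps isomorphically onto a subtorus $\bar T \subseteq L$. As the image of a central subgroup, and connected, $\bar T$ is contained in the connected center $Z(L)^\circ$. The standard decomposition $L = Z(L)^\circ \cdot [L,L]$ with $Z(L)^\circ \cap [L,L]$ finite then exhibits $L^{\mathrm{ab}} := L/[L,L]$ as a torus of some rank $d \geq r$, and the composition $\phi : T \cong \bar T \hookrightarrow Z(L)^\circ \twoheadrightarrow L^{\mathrm{ab}}$ has finite kernel.

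Next, any homomorphism $\kappa : H \to \GG_m^{\oplus r}$ kills $R_u(H)$ (unipotent groups admit no non-trivial characters) and $[L,L]$ (since $\GG_m^{\oplus r}$ is abelian), and hence factors uniquely through $H \twoheadrightarrow L^{\mathrm{ab}}$. The problem thus reduces to finding a surjection $\bar\kappa : L^{\mathrm{ab}} \twoheadrightarrow \GG_m^{\oplus r}$ such that $\bar\kappa \circ \phi$ is also surjective. Dualizing on character groups, $\phi^\ast : X^\ast(L^{\mathrm{ab}}) \to X^\ast(T) \cong \ZZ^r$ has finite cokernel, so $\phi^\ast \otimes \mathbb{Q}$ is surjective. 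Pick lifts in $X^\ast(L^{\mathrm{ab}}) \otimes \mathbb{Q}$ of a $\mathbb{Q}$-basis of $X^\ast(T) \otimes \mathbb{Q}$, clear denominators to land in $X^\ast(L^{\mathrm{ab}})$, and let $\bar\kappa^\ast : \ZZ^r \to X^\ast(L^{\mathrm{ab}})$ send the standard basis to the resulting elements. Then $\phi^\ast \circ \bar\kappa^\ast$ is an $r \times r$ integer diagonal matrix with nonzero entries; in particular both $\bar\kappa^\ast$ and $\phi^\ast \circ \bar\kappa^\ast$ are injective, which dually says that both $\bar\kappa$ and $\bar\kappa \circ \phi$ are surjective.

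Putting everything together, $\kappa : H \twoheadrightarrow L \twoheadrightarrow L^{\mathrm{ab}} \xrightarrow{\bar\kappa} \GG_m^{\oplus r}$ is surjective as a composite of surjections, while $\kappa|_A$ is surjective because $\kappa|_T = \bar\kappa \circ \phi$ already is.

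The main subtlety lies in the first step: ensuring that the rank-$r$ central torus $T \subseteq A$ persists intact through the successive quotients $H \to L \to L^{\mathrm{ab}}$. This depends on two standard structural facts (both available in the setting of \S\ref{section:LinearAlgebraicGroups}): a subtorus of a connected linear algebraic group meets any unipotent subgroup trivially, and the connected center of a connected reductive group surjects with finite kernel onto the (toric) abelianization. With these in hand --- and they transfer from the {\'e}tale category to the analytic one via analytification --- the remainder is elementary lattice arithmetic directly generalizing the role of the determinant map in the diagram \eqref{eq:MasterDiagramGL_n}.
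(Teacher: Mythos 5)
Your proof is correct, and it is built on the same structural decomposition the paper uses: pass to the reductive quotient $L = H/R_u(H)$, split $L$ into the almost-direct product of its connected center and derived subgroup, and observe that $L^{\mathrm{ab}} = L/[L,L]$ (which is the paper's $RH_\mathrm{red}/(RH_\mathrm{red}\cap\mathcal{D}H_\mathrm{red})$) is a torus through which any candidate $\kappa$ must factor. Where you diverge is in how the target is brought down to rank $r$. The paper first reduces to the case $A = Z(H)$, produces the quotient $H \twoheadrightarrow L^{\mathrm{ab}}$ (which may have rank strictly larger than $r$), and then invokes, somewhat informally, a projection onto a rank-$r$ subtorus chosen so that the induced map on $\operatorname{Lie}(A)$ is an isomorphism. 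You avoid the reduction to $A = Z(H)$ entirely and instead track the maximal subtorus $T \subseteq A^\circ$ from the outset, prove that $\phi: T \to L^{\mathrm{ab}}$ is an isogeny onto its image, and then carry out the selection of the rank-$r$ projection explicitly at the level of character lattices: lift a $\mathbb{Q}$-basis through $\phi^\ast\otimes\mathbb{Q}$, clear denominators, and read off surjectivity from injectivity of the dual maps. This buys a genuinely complete argument in the step the paper leaves fuzzy --- why a projection with the desired property on Lie algebras (equivalently, cocharacter lattices) actually exists --- at the modest cost of a short detour through the character-group dictionary. The paper's version is shorter and recovers the determinant of $GL_n$ more transparently; yours is airtight on the one point the paper does not fully justify.
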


\begin{proof}
We can assume that $A$ coincides with the center of $H$; otherwise compose the homomorphism constructed below with a projection onto a torus of the appropriate rank in such a way that the induced map from the Lie algebra of $A$ to that of the torus becomes an isomorphism.

Write $H$ as an extension of its maximal reductive quotient $H_\mathrm{red}$ by its unipotent radical $R_u H$:
\[
1 \to R_u H \to H \to H_\mathrm{red} \to 1
\]
The structure theorem of reductive groups 
gives a decomposition of $H_\mathrm{red}$ as the almost-direct product of its radical $RH_\mathrm{red}$ and its derived subgroup $\mathcal{D}H_\mathrm{red}$:
\[
1 \to RH_\mathrm{red} \cap \mathcal{D}H_\mathrm{red} \to RH_\mathrm{red} \times \mathcal{D}H_\mathrm{red} \to H_\mathrm{red} \to 1
\]
Here $RH_\mathrm{red}$ is the maximal subtorus in $Z(H_\mathrm{red})$, and the intersection $RH_\mathrm{red} \cap \mathcal{D}H_\mathrm{red}$ is finite.

The canonical projection $RH_\mathrm{red} \times \mathcal{D}H_\mathrm{red} \longrightarrow RH_\mathrm{red}$ descends to a surjective homomorphism
\[
H_\mathrm{red} \longrightarrow \frac{RH_\mathrm{red}}{RH_\mathrm{red} \cap \mathcal{D}H_\mathrm{red}}
\]
that is obviously surjective when restricted to its center. Notice that the target of this map is again a torus, of the same rank as $RH_\mathrm{red}$. Composing with the projection $H \to H_\mathrm{red}$ provides the required homomorphism.
\end{proof}

\begin{remark}
\label{remark:ConnectednessOfH}
We insist on the connectedness assumption on $H$ only because we do not know whether we can extend this last proposition to the non-connected case. The requirement is superfluous whenever we can lift the homomorphism above from the connected component of the identity to the whole group. Since these fit in the exact sequence
\[
1 \to H^0 \to H \to \pi_0 H \to 1,
\]
all of our results hold, for example, for groups for which the latter is a split sequence.
\end{remark}

It is an instructive exercise to follow the steps of this last proof in the case $H = GL_n$, for it exactly reconstructs the determinant map. Remember that it was the surjectivity of this map ---and that of its restriction to the center of $GL_n$--- that allowed us to construct \eqref{eq:MasterDiagramGL_n}. Since $\kappa$ enjoys the same properties, we have another commutative diagram with exact rows and columns:
\begin{equation} \label{eq:MasterDiagramH}
\begin{gathered}
\begin{tikzpicture}[x=200pt, y=170pt]
  \node (12) at (0.2,1) {$1$};
  \node (13) at (0.4,1){$1$};
  \node (21) at (0,0.8) {$1$};
  \node (22) at (0.2,0.8) {$A'$};
  \node (23) at (0.4,0.8) {$H'$};
  \node (24) at (0.6,0.8) {$K$};
  \node (25) at (0.8,0.8) {$1$};
  \node (31) at (0,0.6) {$1$};
  \node (32) at (0.2,0.6) {$A$};
  \node (33) at (0.4,0.6) {$H$};
  \node (34) at (0.6,0.6) {$K$};
  \node (35) at (0.8,0.6) {$1$};
  \node (42) at (0.2,0.4) {$\GG_m^{\oplus r}$};
  \node (43) at (0.4,0.4) {$\GG_m^{\oplus r}$};
  \node (52) at (0.2,0.2) {$1$};
  \node (53) at (0.4,0.2) {$1$};
  \draw[double distance=1.5pt] (24) -- (34);
  \draw[double distance=1.5pt] (42) -- (43);
  \path[semithick,->]
    (12) edge (22)
    (13) edge (23)
    (21) edge (22)
    (22) edge (23)
    (22) edge (32)
    (23) edge (24)
    (23) edge (33)
    (24) edge (25)
    (31) edge (32)
    (32) edge (33)
    (32) edge node[pos=0.45,scale=0.8,xshift=7pt] {$\kappa$} (42)
    (33) edge (34)
    (33) edge node[pos=0.45,scale=0.8,xshift=7pt] {$\kappa$} (43)
    (34) edge (35)
    (42) edge (52)
    (43) edge (53);
\end{tikzpicture}
\end{gathered}
\end{equation}
Here we slightly abuse notation by denoting the restriction of $\kappa$ to $A$ by the same letter. There are two important remarks that we should make about the kernel $A'$ of the latter:
\begin{itemize}[topsep=-3pt,partopsep=0pt,parsep=2pt,itemsep=0pt]
\item By construction, $\kappa$ is trivial when restricted to the unipotent part of $A$, which is hence fully contained in $A'$.
\item On the other hand, the restriction of $\kappa$ to the non-unipotent part of $A$ induces an isomorphism at the level of Lie algebras, which in turn leads us to the crucial statement about $A'$: it contains no torus part. In other words, $A'$ satisfies the hypotheses of Proposition \ref{proposition:HodgeCorrespondenceGerbes}.
\end{itemize}

\subsection{Recapitulation: statement of the main theorem}
\label{section:StatementMainTheorems}

As promised in the introduction, the most general form of our main theorem involves basic \mbox{$H$-torsors} on $A$-gerbes over the de Rham and Dolbeault stacks of $X$, where $H$ is a linear algebraic group over $\CC$, and $A$ a closed subgroup of its center. Once again, we defer the statement of the stability conditions on the Dolbeault side until \S\ref{section:LiftingFlatHiggsAGerbes} (Definitions \ref{definition:SemistableBasicHPrimeTorsor} and \ref{definition:SemistableRectifiableBasicHTorsor}). Under the assumptions detailed below, the following statement holds.

\begin{theorem}
\label{theorem:MainTheorem}
Let $\theta \in \Map\!\big(X_\mathrm{dR}, B^2A\big)_0$ be a flat $A$-gerbe over $X$. Then there is a Higgs $A$-gerbe over X, $\widetilde\theta  \in \Map\!\big(X_\mathrm{Dol}, B^2A\big)_0$, for which there is an equivalence
\[
\Map_{BA} \!\big( {}_\theta(X_\mathrm{dR}), BH \big) \simeq \Map_{BA} \!\big( {}_{\widetilde\theta}(X_\mathrm{Dol}), BH \big)^\mathrm{ss}
\]
Conversely, given $\widetilde\theta \in \!\Map\big(X_\mathrm{Dol}, B^2A\big)_0$ we can find $\theta \in \!\Map\big(X_\mathrm{dR}, B^2A\big)_0$ such that the same conclusion holds.
\end{theorem}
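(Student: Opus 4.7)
The plan is to reduce Theorem \ref{theorem:MainTheorem} to the classical Hodge correspondences via the master diagram \eqref{eq:MasterDiagramH}, exploiting the fact that, while abelian Hodge theory fails for $\GG_m$-gerbes (\S\ref{section:HodgeCorrespondenceGerbes}), it does hold for $A'$-gerbes because $A'$ contains no algebraic torus (Proposition \ref{proposition:HodgeCorrespondenceGerbes}). The overall strategy proceeds in three stages: reduce the problem from $(H,A)$ to $(H',A')$, apply the known correspondences on the primed side, and push the result back to $(H,A)$.

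First, I would establish a necessary condition for the de Rham side to be nonempty. Composing $\theta$ with the map $B^2A \to B^2\GG_m^{\oplus r}$ induced by the left column of \eqref{eq:MasterDiagramH} yields a class in $\Map(X_\mathrm{dR}, B^2\GG_m^{\oplus r})$, and an argument identical to Lemma \ref{lemma:NoBasicGLnTorsorsForNonTorsionGerbes} shows that $\Map_{BA}({}_\theta(X_\mathrm{dR}), BH)$ is empty unless this image is trivializable. When it is, the fiber sequence $B^2A' \to B^2A \to B^2\GG_m^{\oplus r}$ allows us to choose a lift $\theta' \in \Map(X_\mathrm{dR}, B^2A')_0$; the space of such lifts is a torsor over $\Map(X_\mathrm{dR}, B\GG_m^{\oplus r})$, an ambiguity that must be tracked carefully. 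The key reduction step then asserts that for any chosen lift $\theta'$, there is an equivalence
\[
\Map_{BA}\!\big({}_\theta(X_\mathrm{dR}), BH\big) \simeq \Map_{BA'}\!\big({}_{\theta'}(X_\mathrm{dR}), BH'\big),
\]
and symmetrically on the Dolbeault side. Presenting both sides as limits via \eqref{eq:BasicHTorsors} and using that \eqref{eq:MasterDiagramH} factors $ob_A$ as $f \circ ob_{A'}$, where $f: B^2A' \to B^2A$ is the natural map, this reduces to comparing fibers of two obstruction maps with common source $\Map(X_\mathrm{dR}, BK)$. The middle-column fiber sequence $BH' \to BH \to B\GG_m^{\oplus r}$ is compatible with the left-column sequence $BA' \to BA \to B\GG_m^{\oplus r}$, and this compatibility ensures that a basic $H$-torsor on ${}_\theta(X_\mathrm{dR})$ is the same data as a basic $H'$-torsor on ${}_{\theta'}(X_\mathrm{dR})$ once a trivialization of the image in $B^2\GG_m^{\oplus r}$ is fixed --- which is exactly the data encoded by choosing $\theta'$.

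With the problem reduced to $(H',A')$, the Hodge correspondences apply: since $A' \cong \GG_a^{\oplus s} \oplus F$, Proposition \ref{proposition:HodgeCorrespondenceGerbes} yields $\Map(X_\mathrm{dR}, B^2A') \simeq \Map(X_\mathrm{Dol}, B^2A')$, producing a Higgs $A'$-gerbe $\widetilde{\theta'}$, while Theorem \ref{th:NonabelianHodgeTheoremGTorsors} (using reductivity of $K$) gives $\Map(X_\mathrm{dR}, BK) \simeq \Map(X_\mathrm{Dol}, BK)^{\mathrm{ss},0}$. These equivalences commute with the obstruction maps $ob_{A'}$ on either side, so they pass to the limit to give
\[
\Map_{BA'}\!\big({}_{\theta'}(X_\mathrm{dR}), BH'\big) \simeq \Map_{BA'}\!\big({}_{\widetilde{\theta'}}(X_\mathrm{Dol}), BH'\big)^{\mathrm{ss}}.
\]
Pushing $\widetilde{\theta'}$ forward along $B^2A' \to B^2A$ defines $\widetilde\theta$, and applying the reduction on the Dolbeault side completes the proof; the converse direction is formally symmetric, starting from any Higgs $A$-gerbe whose image in $B^2\GG_m^{\oplus r}$ is trivializable.

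The main obstacle I anticipate is the key reduction: one must show it is essentially independent of the chosen lift $\theta'$ and compatible on both sides of the correspondence, with all the ambiguity absorbed in a well-defined manner. This is also where the semistability conditions of the theorem statement arise --- they will be precisely those stability notions on basic $H$-torsors on ${}_{\widetilde\theta}(X_\mathrm{Dol})$ which, upon this reduction, correspond to classical semistability and vanishing-Chern-number conditions for the underlying $K$-torsor, together with whatever additional constraint is imposed by the torus quotient along $\kappa$.
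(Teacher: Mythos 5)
Your proposal correctly identifies the master diagram \eqref{eq:MasterDiagramH} as the engine of the proof and correctly diagnoses that the point of passing to $(H',A')$ is to put oneself in the range where the gerbe Hodge correspondence (Proposition \ref{proposition:HodgeCorrespondenceGerbes}) actually holds. But the ``key reduction step'' you assert is false, and fixing it is the bulk of the paper's \S\ref{section:Proof}.

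You claim that for any chosen lift $\theta'$ there is an equivalence
\[
\Map_{BA}\!\big({}_\theta(X_\mathrm{dR}), BH\big) \simeq \Map_{BA'}\!\big({}_{\theta'}(X_\mathrm{dR}), BH'\big),
\]
on the grounds that a basic $H$-torsor on ${}_\theta(X_\mathrm{dR})$ becomes a basic $H'$-torsor on ${}_{\theta'}(X_\mathrm{dR})$ ``once a trivialization of the image in $B^2\GG_m^{\oplus r}$ is fixed --- which is exactly the data encoded by choosing $\theta'$.'' This is not so. A basic $H$-torsor on ${}_\theta(X_\mathrm{dR})$ is a pair $(Q,\gamma)$ of a $K$-torsor $Q$ and an equivalence $\gamma\colon ob_A(Q)\to{}_\theta(X_\mathrm{dR})$; a basic $H'$-torsor on ${}_{\theta'}(X_\mathrm{dR})$ is the \emph{triple} $(Q,\gamma,F)$ where $F$ is a homotopy reconciling the two a priori unrelated trivializations $\theta'\circ\kappa(\gamma)$ and $ob_{A'}(Q)$ of $\kappa(ob_A(Q))$ (diagram \eqref{eq:BasicHPrimeTorsor}). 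The existence of such an $F$ is a genuine condition on $(Q,\gamma)$ --- it fails precisely when $ob_{A'}(Q)$ and ${}_{\theta'}(X_\mathrm{dR})$ are inequivalent $A'$-gerbes, even though both map to $[\theta]$ under $ob_A^{A'}$ --- and when $F$ exists it is ambiguous up to $\Map(X_\mathrm{dR},\GG_m^{\oplus r})$, so the forgetful map $\pi$ of \eqref{eq:ForgetfulMap} is neither essentially surjective nor faithful. What is true is that, after first enlarging the source by twisting with $A$-torsors and then quotienting by $A'$-torsors, one obtains the multiplication map $m$ of \eqref{eq:MultiplicationMap}, and even $m$ is only an effective epimorphism onto the $\theta'$-rectifiable piece $\Map_{BA}\!\big({}_\theta(X_\mathrm{dR}),BH\big)^{\theta'\text{-rect}}$. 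The correct decomposition is the coproduct of Corollary \ref{Cor:UnionRectifiablePieces},
\[
\Map_{BA}\!\big({}_\theta(X_\mathrm{dR}), BH\big) \simeq \coprod_{[\theta']\in L(X_\mathrm{dR})([\theta])} \Map_{BA}\!\big({}_\theta(X_\mathrm{dR}), BH\big)^{\theta'\text{-rect}},
\]
indexed by \emph{all} lifts, and the remaining work (Lemma \ref{lemma:CorrespondenceComponents} and the diagram \eqref{eq:Liftings}) is to match the index sets $L(X_\mathrm{dR})([\theta])$ and $L(X_\mathrm{Dol})([\widetilde\theta])$ across the Hodge correspondence --- which is only an injection in general, becoming a bijection when restricted to lifts realized by $K$-torsors. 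None of this is visible in your reduction, and your final paragraph misdiagnoses the obstacle as merely checking independence of the chosen lift.

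One smaller point: your heuristic for semistability is in the right neighborhood but incomplete. The paper imposes semistability through the multiplication map (Definitions \ref{definition:SemistableBasicHPrimeTorsor} and \ref{definition:SemistableRectifiableBasicHTorsor}): it asks the underlying $K$-torsor on $X_\mathrm{Dol}$ to be semistable with vanishing first and second rational Chern classes, \emph{and} it asks the extra $A$-torsor factor appearing in $m$ to have zero first Chern class; well-definedness under the $\Map(X_\mathrm{Dol},BA')$-action is then a separate check that uses $A'$ having no torus part. None of this can be stated until the quotient/coproduct structure you omitted is in place.
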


The initial data is subject to the following two requirements:
\begin{itemize}[topsep=-3pt,partopsep=0pt,parsep=2pt,itemsep=0pt]
\item $H$ is connected, and
\item (in the algebraic category) the quotient $K = H/A$ is reductive.
\end{itemize}
The first condition ensures the existence of the map $\kappa$ in Proposition \ref{proposition:MasterDiagram} (see Remark \ref{remark:ConnectednessOfH}, though). The second ensures that the Hodge correspondence for $K$-torsors can be translated to the {\'e}tale case using Proposition \ref{proposition:AlgebraicHodgeCorrespondence}.

\section{The proof}
\label{section:Proof}

\subsection{Torusless gerbes and rectifiability}
\label{section:ToruslessGerbes}

In analogy with \eqref{eq:TorsionDiagramGL_n} we have the following diagrams of $\infty$-stacks, where again the rows and the column are fiber sequences:
\begin{equation} \label{eq:TorsionDiagramH}
\begin{gathered}
\begin{tikzpicture}[x=140pt, y=75pt]
  \node (11) at (0,1) {$BH'$};
  \node (12) at (0.5,1) {$BK$};
  \node (13) at (1,1) {$B^2A'$};
  \node (21) at (0,0.5) {$BH$};
  \node (22) at (0.5,0.5) {$BK$};
  \node (23) at (1,0.5) {$B^2A$};
  \node (33) at (1,0) {$B^2\GG_m^{\oplus r}$};
  \draw[double distance=1.5pt] (12) -- (22);
  \path[semithick,->]
    (11) edge (12)
    (11) edge (21)
    (12) edge node[pos=0.5,scale=0.8,yshift=10pt] {$ob_{A'}$} (13)
    (13) edge node[pos=0.45,scale=0.8,xshift=15pt] {$ob_A^{A'}$}(23)
    (21) edge (22)
    (22) edge node[pos=0.5,scale=0.8,yshift=10pt] {$ob_A$} (23)
    (23) edge node[pos=0.45,scale=0.8,xshift=8pt] {$\kappa$} (33);
\end{tikzpicture}
\end{gathered}
\end{equation}
It is now clear what takes on the role that torsion had in the vector bundle case.

\begin{definition}
We say that an $A$-gerbe over $\mathfrak{X}$ is \emph{$\kappa$-torsion} if its image under the map 
\[
\Map\!\big( \mathfrak{X}, B^2A \big) \xrightarrow{\;\;\;\kappa\;\;\;} \Map\!\big( \mathfrak{X}, B^2\GG_m^{\oplus r} \big),
\]
is a trivializable $\GG_m^{\oplus r}$-gerbe.
\end{definition}

\begin{lemma}[(cf. Lemma \ref{lemma:NoBasicGLnTorsorsForNonTorsionGerbes})] \label{lemma:NoBasicHTorsorsForNonKappaTorsionGerbes}
Let $\alpha \in \Map\!\big( \mathfrak{X}, B^2 A \big)_0$ be an $A$-gerbe over an $\infty$-stack $\mathfrak{X}$. Then, the category of basic $H$-torsors on ${}_\alpha \mathfrak{X}$ is empty unless ${}_\alpha \mathfrak{X}$ is $\kappa$-torsion.
\end{lemma}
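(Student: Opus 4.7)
My plan is to adapt the argument of Lemma \ref{lemma:NoBasicGLnTorsorsForNonTorsionGerbes} verbatim, replacing the $n$-th power map $B^2\GG_m \to B^2\GG_m$ by the morphism $\kappa: B^2A \to B^2\GG_m^{\oplus r}$. First I would rewrite the category of basic $H$-torsors on ${}_\alpha\mathfrak{X}$ using \eqref{eq:BasicHTorsors} as the pullback
\[
\Map_{BA}\!\big({}_\alpha\mathfrak{X}, BH\big) \simeq \lim\!\Big\{ \ast \xrightarrow{\;\alpha\;} \Map\!\big(\mathfrak{X}, B^2 A\big) \xleftarrow{\;ob_A \circ -\;} \Map\!\big(\mathfrak{X}, BK\big) \Big\}.
\]

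The crux of the argument will be the nullhomotopy of $\kappa \circ ob_A$ as a map $BK \to B^2\GG_m^{\oplus r}$. I would extract this directly from diagram \eqref{eq:TorsionDiagramH}: the commutativity of that diagram factors $ob_A$ through $ob_A^{A'} \circ ob_{A'}$, and the right column $B^2A' \xrightarrow{ob_A^{A'}} B^2A \xrightarrow{\kappa} B^2\GG_m^{\oplus r}$ is a fiber sequence, so that $\kappa \circ ob_A^{A'} \simeq 0$. Hence $\kappa \circ ob_A \simeq 0$, and the induced map $\Map\!\big(\mathfrak{X}, BK\big) \to \Map\!\big(\mathfrak{X}, B^2\GG_m^{\oplus r}\big)$ factors through the connected component of the trivial $\GG_m^{\oplus r}$-gerbe on $\mathfrak{X}$.

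To conclude, I would postcompose the whole pullback diagram above with $\kappa \circ -$. If ${}_\alpha\mathfrak{X}$ is not $\kappa$-torsion, the resulting point $\kappa(\alpha) \in \Map\!\big(\mathfrak{X}, B^2\GG_m^{\oplus r}\big)$ lies in a connected component distinct from that of the trivial gerbe, while the image of $\Map\!\big(\mathfrak{X}, BK\big)$ lies entirely in the latter. These two maps therefore cannot be connected by a homotopy, so the pullback can have no $0$-simplex and the category of basic $H$-torsors is empty. No significant obstacle is anticipated here: the argument is entirely formal, and the required nullhomotopy is baked into the construction of \eqref{eq:TorsionDiagramH} from \eqref{eq:MasterDiagramH}.
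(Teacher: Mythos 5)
Your proof is correct and takes essentially the same approach as the paper's: the paper proves the $GL_n$ version (Lemma~\ref{lemma:NoBasicGLnTorsorsForNonTorsionGerbes}) by noting that the two legs of the pullback square land in different connected components of $\Map(\mathfrak{X}, B^2 A)$, because obstruction gerbes always have trivializable $\kappa$-image (the factorization through $B^2 A'$ in \eqref{eq:TorsionDiagramH}), while $\kappa(\alpha)$ does not. Your rephrasing via postcomposition with $\kappa$ and the nullhomotopy of $\kappa \circ ob_A$ makes the same observation, perhaps slightly more explicitly.
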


\subsubsection{}
\label{section:ConstructionMultiplicationMap}

We now seek to give a different presentation of the category of basic $H$-torsors on a $\kappa$-torsion $A$-gerbe ---one that drops the explicit dependence on the category of $A$-gerbes in favor of that of $A'$-gerbes, for which the Hodge correspondence holds.

Consider the antidiagonal actions of $A'$ on $A' \times A$ and $H' \times A$. They give rise to an exact sequence of augmented simplicial objects in the category of linear algebraic groups over $\CC$:
\begin{equation} \label{eq:ExactSequenceAugmentedSimplicialObjects}
\begin{gathered}
\begin{tikzpicture}[x=450pt, y=45pt]
  \node (11) at (0,1) {$1$};
  \node (12) at (0.15,1) {$A' \times A \,\big/\!\!\big/ A'$};
  \node (13) at (0.35,1) {$H' \times A \,\big/\!\!\big/ A'$};
  \node (14) at (0.55,1) {$K \big/\!\!\big/ \ast$};
  \node (15) at (0.7,1) {$1$};
  \node (21) at (0,0) {$1$};
  \node (22) at (0.15,0) {$A$};
  \node (23) at (0.35,0) {$H$};
  \node (24) at (0.55,0) {$K$};
  \node (25) at (0.7,0) {$1$};
  \path[semithick,->]
    (11) edge (12)
    (12) edge node[scale=0.8,xshift=10pt] {$m$} (22)
    (12) edge (13)
    (13) edge (14)
    (13) edge node[scale=0.8,xshift=10pt] {$m$} (23)
    (14) edge (15)
    (14) edge[shorten <=2pt] (24)
    (21) edge (22)
    (22) edge (23)
    (23) edge (24)
    (24) edge (25);
\end{tikzpicture}
\end{gathered}
\end{equation}
The augmentations of each of these are in fact the quotient maps of the corresponding actions. Notice that, although the action of $A'$ described by the leftmost simplicial object seems to be twisted, it is actually isomorphic to the trivial $A'$-action on $A' \times A$.

The Puppe sequences of the rows in \eqref{eq:ExactSequenceAugmentedSimplicialObjects} yield the folllowing diagram of augmented simplicial objects in the appropriate $\infty$-topos of $\infty$-stacks:
\begin{equation} \label{eq:ObstructionDiagram}
\begin{gathered}
\begin{tikzpicture}[x=140pt, y=45pt]
  \node (11) at (0,1) {$BK \big/\!\!\big/ \ast$};
  \node (12) at (1,1) {$B^2A' \times B^2A \,\big/\!\!\big/ B^2A'$};
  \node (21) at (0,0) {$BK$};
  \node (22) at (1,0) {$B^2A$};
  \path[semithick,->]
    (11) edge node[scale=0.8,yshift=10pt] {$(ob_{A'},\ast) /\!/ \ast$} (12)
    (11) edge (21)
    (12) edge node[scale=0.8,xshift=10pt] {$m$} (22)
    (21) edge node[scale=0.8,yshift=10pt] {$ob_A$} (22);
\end{tikzpicture}
\end{gathered}
\end{equation}
Here the augmentation of the simplicial object on the left is trivially an effective epimorphism, while that of the simplicial object on the right is so because it is induced by the quotient of the trivial $A'$-torsor on $A$. The horizontal maps are composed of the universal obstruction $A'$- and $A$-gerbe for $K$-torsors, and the trivial maps that pick the natural points out of $B^2A'$ and $B^2A$.

Now, for any $\infty$-stack $\mathfrak{X}$, apply the $\infty$-functor $\Map\!\big( \mathfrak{X}, - \big)$ to \eqref{eq:ObstructionDiagram}. After choosing a $\kappa$-torsion $A$-gerbe on $\mathfrak{X}$, $\alpha \in \Map\!\big( \mathfrak{X}, B^2A \big)_0$, and a lift, $\alpha' \in \Map\!\big( \mathfrak{X}, B^2A' \big)_0$, to an $A'$-gerbe on $\mathfrak{X}$, we can extend the resulting diagram of augmented simplicial $\infty$-groupoids to the following:
\vspace{10pt}
\begin{equation}
\begin{gathered}
\begin{tikzpicture}[x=350pt, y=60pt]
  \node (11) at (0,1) {$\Map\!\big(\mathfrak{X}, BK\big) \big/\!\!\big/ \ast$};
  \node (12) at (0.55,1) {$\left.\left.\begin{matrix}\Map\!\big(\mathfrak{X}, B^2A'\big) \\[2pt] \times \Map\!\big(\mathfrak{X}, B^2A\big)\end{matrix} \,\right/\!\!\!\!\!\!\right/ \Map\!\big(\mathfrak{X}, B^2A'\big)$};
  \node (13) at (1,1) {$\ast \big/\!\!\big/ \ast$};
  \node (21) at (0,0) {$\Map\!\big(\mathfrak{X}, BK\big)$};
  \node (22) at (0.55,0) {$\Map\!\big(\mathfrak{X}, B^2A\big)$};
  \node (23) at (1,0) {$\ast$};
  \path[semithick,->]
    (11) edge node[scale=0.8,yshift=10pt] {$(ob_{A'},\ast) /\!/ \ast$} (12)
    (11) edge (21)
    (12) edge[shorten <=2pt] node[pos=0.5,scale=0.8,xshift=10pt] {$m$} (22)
    (13) edge node[scale=0.8,yshift=10pt] {$(\alpha',\ast) /\!/ \ast$} (12)
    (13) edge[shorten <=3pt] (23)
    (21) edge node[scale=0.8,yshift=10pt] {$ob_A$} (22)
    (23) edge node[scale=0.8,yshift=10pt] {$\alpha$} (22);
\end{tikzpicture}
\end{gathered}
\end{equation}
Once again, each vertical map in this diagram realizes the quotient of the groupoid object above it. Taking limits along the rows finally yields:
\vspace{10pt}
\begin{equation} \label{eq:MultiplicationMap}
\begin{gathered}
\begin{tikzpicture}[x=350pt, y=60pt]
  \node (1) at (0,1) {$\left.\left.\begin{matrix}\Map_{BA'}\!\big({}_{\alpha'}\mathfrak{X}, BH'\big) \\[2pt] \times \Map\!\big(\mathfrak{X}, BA\big)\end{matrix} \,\right/\!\!\!\!\!\!\right/ \Map\!\big(\mathfrak{X}, BA'\big)$};
  \node (2) at (0,0) {$\Map_{BA}\!\big({}_{\alpha}\mathfrak{X}, BH\big)$};
  \path[semithick,->]
    (1) edge node[pos=0.5,scale=0.8,xshift=10pt] {$m$} (2);
\end{tikzpicture}
\end{gathered}
\end{equation}
This last map is unfortunately \emph{not} an effective epimorphism\footnote{Effective epimorphisms constitute the \emph{left} part of an orthogonal factorization system in any $\infty$-topos, which is not necessarily preserved by $\infty$-limits \cite[\S 5.2.8]{MR2522659}}. It turns into one, however, if we restrict to certain connected components of the target. 

\subsubsection{}

Before exploring this last claim, let us describe what the objects and morphisms in \eqref{eq:MultiplicationMap} look like in concrete terms. From the realization \eqref{eq:BasicHTorsors} of the category of basic $H$-torsors on ${}_\alpha \mathfrak{X}$ as a limit, we see that its objects are given by pairs
\begin{equation} \label{eq:BasicHTorsor}
\Big( Q \to \mathfrak{X}, \; ob_A(Q \to \mathfrak{X}) \xrightarrow{\;\;\gamma\;\;} {}_\alpha\mathfrak{X} \Big)
\end{equation}
of a $K$-torsor on $\mathfrak{X}$ ---which we dub the \emph{underlying $K$-torsor} of the pair--- together with an equivalence between its obstruction $A$-gerbe and ${}_\alpha\mathfrak{X}$. A similar description could be made of basic $H'$-torsors on ${}_{\alpha'} \mathfrak{X}$, but there is a slightly different characterization of these that will later prove useful for our purposes.

Indeed, recall from \eqref{eq:TorsionDiagramH} that the following is a fiber sequence of $\infty$-groupoids:
\[
\Map\!\big( \mathfrak{X}, B^2 A' \big) \xrightarrow{\;\; ob_{A}^{A'}\;\;} \Map\!\big( \mathfrak{X}, B^2 A \big) \xrightarrow{\;\;\;\kappa\;\;\;} \Map\!\big( \mathfrak{X}, B^2 \GG_m^{\oplus r} \big).
\]
We can read this as saying that an $A'$-gerbe can be seen as a ($\kappa$-torsion) $A$-gerbe together with the choice of a trivialization of its image under $\kappa$. In particular, think of the obstruction $A'$-gerbe of a $K$-torsor $Q \to \mathfrak{X}$ as its obstruction $A$-gerbe together with a trivialization
\[
\kappa \big( ob_A(Q \to \mathfrak{X}) \big) \xrightarrow{\;\; ob_{A'}\;\;} \mathfrak{X} \times B\GG_m^{\oplus r}.
\]
Similarly, a lift $\alpha'$ of $\alpha$ can be expressed as the choice of a trivialization
\[
\kappa \big( {}_\alpha \mathfrak{X} \big) \xrightarrow{\;\;\alpha'\;\;} \mathfrak{X} \times B\GG_m^{\oplus r}.
\]
In this language, to give a basic $H'$-torsor on ${}_{\alpha'} \mathfrak{X}$ is the same as giving a triple
\begin{equation} \label{eq:BasicHPrimeTorsor}
\left( Q \to \mathfrak{X}, \; ob_A(Q \to \mathfrak{X}) \xrightarrow{\;\;\gamma\;\;} {}_\alpha\mathfrak{X},
\begin{gathered}
\begin{tikzpicture}[x=100pt, y=40pt]
  \node (1) at (0,1) {$\kappa \big( ob_A(Q \to \mathfrak{X}) \big)$};
  \node (2) at (1,1) {$\kappa \big( {}_\alpha \mathfrak{X} \big)$};
  \node (3) at (0.5,0) {$\mathfrak{X} \times B\GG_m^{\oplus r}$};
  \node[scale=1.25] (4) at (0.5,0.6) {$\Downarrow$};
  \node[scale=0.75] (5) at (0.58, 0.62) {$F$};
  \path[semithick,->]
    (1) edge node[scale=0.8, yshift=9pt] {$\kappa(\gamma)$} (2)
    (1) edge node[scale=0.8, pos=0.6, xshift=-16pt] {$ob_{A'}$} (3)
    (2) edge node[scale=0.8, pos=0.6, xshift=14pt] {$\alpha'$} (3);
\end{tikzpicture}
\end{gathered}
\right)
\end{equation}
The existence of a homotopy $F$ filling the last diagram is what says that $\gamma$ is part of an equivalence of $A'$-gerbes ---given by the choice of such a homotopy.

The first two pieces of data in \eqref{eq:BasicHPrimeTorsor} define a basic $H$-torsor on ${}_\alpha \mathfrak{X}$ \eqref{eq:BasicHTorsor}, and forgetting the homotopy $F$ is precisely the fortgetful functor
\begin{equation} \label{eq:ForgetfulMap}
\Map_{BA'}\!\big( {}_{\alpha'}\mathfrak{X}, BH' \big) \xrightarrow{\quad\pi\quad} \Map_{BA}\!\big( {}_\alpha \mathfrak{X}, BH \big)
\end{equation}
obtained by taking limits along the rows of the following diagram:
\[
\begin{tikzpicture}[x=250pt, y=40pt]
  \node (11) at (0,1) {$\Map\!\big(\mathfrak{X}, BK\big)$};
  \node (12) at (0.55,1) {$\Map\!\big(\mathfrak{X}, B^2A'\big)$};
  \node (13) at (1,1) {$\ast$};
  \node (21) at (0,0) {$\Map\!\big(\mathfrak{X}, BK\big)$};
  \node (22) at (0.55,0) {$\Map\!\big(\mathfrak{X}, B^2A\big)$};
  \node (23) at (1,0) {$\ast$};
  \draw[double distance=1.5pt] (11) -- (21); 
  \draw[double distance=1.5pt] (13) -- (23); 
  \path[semithick,->]
    (11) edge node[scale=0.8,yshift=8pt] {$ob_{A'}$} (12)
    (12) edge node[scale=0.8,xshift=16pt] {$ob_A^{A'}$} (22)
    (13) edge node[scale=0.8,yshift=8pt] {$\alpha'$} (12)
    (21) edge node[scale=0.8,yshift=8pt] {$ob_A$} (22)
    (23) edge node[scale=0.8,yshift=8pt] {$\alpha$} (22);
\end{tikzpicture}
\]
In the other direction, completing a pair $(Q, \gamma)$ to a triple $(Q, \gamma, F)$ is not a trivial task, but rather a strong condition on $(Q, \gamma)$ ---and one that depends on the choice of $\alpha'$. Indeed, for a fixed $\alpha'$, we have two trivializations of $\kappa \big( ob(Q \to \mathfrak{X}) \big)$: namely, $\alpha' \circ \kappa(\gamma)$ and $ob_{A'}$. But the category of trivializations of a trivializable $\GG_m^{\oplus r}$-gerbe is (noncanonically) equivalent to $\Map\!\big( \mathfrak{X}, B\GG_m^{\oplus r} \big)$, and we have no reason to expect the two to belong to the same connected component.

On the other hand, the category of equivalences between two (equivalent) $A$-gerbes is (again, noncanonically) equivalent to $\Map\!\big(\mathfrak{X}, BA\big)$, and so we have an action map
\begin{equation} \label{eq:ActionATorsorsOnBasicHTorsors}
\begin{gathered}
\begin{tikzpicture}[x=180pt, y=25pt]
  \node (11) at (0,1) {$\Map_{BA}\!\big( {}_\alpha \mathfrak{X}, BH \big) \times \Map\!\big( \mathfrak{X}, BA \big)$};
  \node (12) at (1,1) {$\Map_{BA}\!\big( {}_\alpha \mathfrak{X}, BH \big)$};
  \node (21) at (0,0) {$\big( (Q, \gamma), \mathcal{L} \big)$};
  \node (22) at (1,0) {$(Q, \mathcal{L}\gamma)$};
  \path[semithick,->]
    (11) edge[shorten <=4pt, shorten >=4pt] node[scale=0.8,yshift=8pt] {$\sigma$} (12);
  \path[semithick,|->]
    (21) edge[shorten <=4pt, shorten >=4pt] (22);
\end{tikzpicture}
\end{gathered}
\end{equation}
We can finally write the multiplication map $m$ in \eqref{eq:MultiplicationMap} as the appropriate composition of the forgetful functor \eqref{eq:ForgetfulMap} and this last map:
\[
\begin{gathered}
\begin{tikzpicture}[x=320pt, y=30pt]
  \node (10) at (-0.2,1) {$m:$};
  \node (11) at (0,1) {$\begin{matrix}\Map_{BA'}\!\big({}_{\alpha'}\mathfrak{X}, BH'\big) \\[2pt] \times \Map\!\big(\mathfrak{X}, BA\big)\end{matrix}$};
  \node (12) at (0.5,1) {$\begin{matrix}\Map_{BA}\!\big({}_\alpha \mathfrak{X}, BH\big) \\[2pt] \times \Map\!\big(\mathfrak{X}, BA\big)\end{matrix}$};
  \node (13) at (1,1) {$\Map_{BA}\!\big({}_\alpha \mathfrak{X}, BH\big)$};
  \node (21) at (0,0) {$\big( (Q, \gamma, F), \mathcal{L} \big)$};
  \node (22) at (0.5,0) {$\big( (Q, \gamma), \mathcal{L} \big)$};
  \node (23) at (1,0) {$(Q, \mathcal{L}\gamma)$};  
  \path[semithick,->]
    (11) edge node[scale=0.8,yshift=8pt] {$\pi\times\operatorname{id}$} (12)
    (12) edge node[scale=0.8,yshift=8pt] {$\sigma$} (13);
  \path[semithick,|->]
    (21) edge (22)
    (22) edge (23);    
\end{tikzpicture}
\end{gathered}
\]
\vspace{-10pt}

For the sake of completeness, let us also give an expression for the action of $\Map\!\big(\mathfrak{X}, BA'\big)$ in \eqref{eq:MultiplicationMap}. Seeing $A'$-torsors as pairs $(\mathcal{L}', \mu)$ consisting of an $A$-torsor and a trivialization of its image under $\kappa$ ---just as we did for $A'$-gerbes---, it is given by
\begin{equation} \label{eq:ActionOfAPrimeTorsors}
\begin{gathered}
\begin{tikzpicture}[x=200pt, y=25pt]
  \node (11) at (0,1) {$\begin{matrix}\Map_{BA'}\!\big({}_{\alpha'}\mathfrak{X}, BH'\big) \\[2pt] \times \Map\!\big(\mathfrak{X}, BA\big)\end{matrix} \times \Map\!\big( \mathfrak{X}, BA' \big)$};
  \node (12) at (1,1) {$\begin{matrix}\Map_{BA'}\!\big({}_{\alpha'}\mathfrak{X}, BH'\big) \\[2pt] \times \Map\!\big(\mathfrak{X}, BA\big)\end{matrix}$};
  \node (21) at (0,0) {$\big( (Q, \gamma, F), \mathcal{L}, (\mathcal{L}', \mu) \big)$};
  \node (22) at (1,0) {$\big( (Q, \mathcal{L'}\gamma, F\circ\mu), (\mathcal{L'})^{-1}\mathcal{L} \big)$};
  \path[semithick,->]
    (11) edge[shorten <=4pt, shorten >=4pt] node[scale=0.8,yshift=8pt] {$\mu$} (12);
  \path[semithick,|->]
    (21) edge[shorten <=4pt, shorten >=4pt] (22);
\end{tikzpicture}
\end{gathered}
\end{equation}

\subsubsection{}
\label{section:Rectifiability}

With descriptions in hand, we return to the claim at the end of \S\ref{section:ConstructionMultiplicationMap}.

\begin{definition} \label{definition:Rectifiability}
A basic $H$-torsor on ${}_\alpha \mathfrak{X}$ is said to be \emph{$\alpha'$-rectifiable} if it belongs to the image of $\Map_{BA'}\!\big({}_{\alpha'}\mathfrak{X}, BH'\big) \times \Map\!\big(\mathfrak{X}, BA\big)$ under the multiplication map $m$ in \eqref{eq:MultiplicationMap}.
\end{definition}

In other words, the multiplication map is an effective epimorphism onto the category of $\alpha'$-rectifiable basic $H$-torsors on ${}_\alpha \mathfrak{X}$, which, in turn, realizes the latter as the quotient
\[
\Map_{BA}\!\big({}_\alpha\mathfrak{X}, BH\big)^{\alpha'\text{-rect}} \simeq \frac{\Map_{BA'}\!\big({}_{\alpha'}\mathfrak{X}, BH'\big) \times \Map\!\big(\mathfrak{X}, BA\big)}{\Map\!\big(\mathfrak{X}, BA'\big)}
\]

\begin{proposition} \label{proposition:RectifiableBasicHTorsors}
A basic $H$-torsor on ${}_\alpha \mathfrak{X}$ is $\alpha'$-rectifiable if and only if the obstruction $A'$-gerbe of its underlying $K$-torsor is equivalent to ${}_{\alpha'} \mathfrak{X}$.
\end{proposition}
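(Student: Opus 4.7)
The plan is to unpack the explicit description of the multiplication map $m$ and the three--term presentation \eqref{eq:BasicHPrimeTorsor} of basic $H'$-torsors, and then verify both implications essentially by chasing the torsor structure on equivalences of gerbes.

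For the ``only if'' direction, I would begin by noting that if $(Q'',\gamma'')$ is $\alpha'$-rectifiable, then by definition of $m$ there exist a basic $H'$-torsor $(Q,\gamma,F)$ on ${}_{\alpha'}\mathfrak{X}$ and an $A$-torsor $\mathcal{L}$ on $\mathfrak{X}$ such that $(Q'',\gamma'')\simeq (Q,\mathcal{L}\gamma)$. In particular, the underlying $K$-torsor of $(Q'',\gamma'')$ is (equivalent to) $Q$. But the data of $(Q,\gamma,F)$ as a triple \eqref{eq:BasicHPrimeTorsor} includes a homotopy $F$ witnessing that $\alpha'\circ\kappa(\gamma)\simeq ob_{A'}(Q)$; this is exactly an equivalence of $A'$-gerbes between $ob_{A'}(Q)$ and ${}_{\alpha'}\mathfrak{X}$, so the conclusion is immediate.

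For the ``if'' direction, suppose $(Q'',\gamma'')$ is a basic $H$-torsor on ${}_\alpha\mathfrak{X}$ whose underlying $K$-torsor $Q''$ has obstruction $A'$-gerbe equivalent to ${}_{\alpha'}\mathfrak{X}$. Choose such an equivalence; using the characterization of an $A'$-gerbe as a $\kappa$-torsion $A$-gerbe together with a trivialization of its image under $\kappa$, this equivalence amounts to a pair $(\gamma,F)$ consisting of an equivalence $\gamma\colon ob_A(Q'')\to{}_\alpha\mathfrak{X}$ of underlying $A$-gerbes and a homotopy $F$ filling the square in \eqref{eq:BasicHPrimeTorsor}. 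Hence $(Q'',\gamma,F)$ is a basic $H'$-torsor on ${}_{\alpha'}\mathfrak{X}$. Now $\gamma$ and $\gamma''$ are two equivalences ${}_{ob_A(Q'')}\mathfrak{X}\simeq {}_\alpha\mathfrak{X}$ of $A$-gerbes, and the space of such equivalences is (noncanonically) a torsor for $\Map(\mathfrak{X},BA)$, as recalled just before \eqref{eq:ActionATorsorsOnBasicHTorsors}. Therefore there exists $\mathcal{L}\in\Map(\mathfrak{X},BA)$ with $\mathcal{L}\gamma\simeq\gamma''$, and then $m\bigl((Q'',\gamma,F),\mathcal{L}\bigr)\simeq(Q'',\gamma'')$, exhibiting $(Q'',\gamma'')$ as $\alpha'$-rectifiable.

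The main potential obstacle is bookkeeping of higher-categorical coherences: one must be careful that ``equivalent'' in each step is taken in the correct $\infty$-categorical sense, i.e., that the $A$-torsor $\mathcal{L}$ produced from two equivalences of $A$-gerbes really lives over the correct connected component, and that the triple data $(Q'',\gamma,F)$ is canonically recovered from an equivalence of $A'$-gerbes rather than requiring an additional choice. Both points follow from the fact that the fiber sequence $ob_A^{A'}$ in \eqref{eq:TorsionDiagramH} exhibits the groupoid of $A'$-gerbe equivalences as the homotopy fiber of the corresponding $A$-gerbe equivalence groupoid over the space of $\kappa$-trivializations, which is exactly the content encoded in the limit diagram defining $\pi$ in \eqref{eq:ForgetfulMap}. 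Once this is set up, the proof is essentially a diagram chase of one step.
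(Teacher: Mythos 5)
Your proof is correct and follows essentially the same route as the paper: interpret the hypothesis on the obstruction $A'$-gerbe as producing a triple $(Q,\gamma,F)$, i.e.\ a basic $H'$-torsor, and then use the fact that equivalences between two equivalent $A$-gerbes form a torsor under $\Map(\mathfrak{X},BA)$ to produce the $A$-torsor $\mathcal{L}$ transporting $\gamma$ to $\gamma''$. The only cosmetic differences are that you spell out the ``only if'' direction (which the paper dismisses as clear) and that your bookkeeping $m\bigl((Q'',\gamma,F),\mathcal{L}\bigr)=(Q'',\mathcal{L}\gamma)\simeq(Q'',\gamma'')$ is actually slightly cleaner than the paper's, which contains a small sign/ordering slip in the last line.
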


\begin{proof}
Necessity is clear, so we just need to prove sufficiency. Let $(Q, \gamma)$ be a basic $H$-torsor on ${}_\alpha \mathfrak{X}$, and suppose $ob_A'(Q \to \mathfrak{X})$ is equivalent to ${}_{\alpha'} \mathfrak{X}$. This amounts to the existence of a pair
\[
\left( ob_A(Q \to \mathfrak{X}) \xrightarrow{\;\;\phi\;\;} {}_\alpha\mathfrak{X},
\begin{gathered}
\begin{tikzpicture}[x=100pt, y=40pt]
  \node (1) at (0,1) {$\kappa \big( ob_A(Q \to \mathfrak{X}) \big)$};
  \node (2) at (1,1) {$\kappa \big( {}_\alpha \mathfrak{X} \big)$};
  \node (3) at (0.5,0) {$\mathfrak{X} \times B\GG_m^{\oplus r}$};
  \node[scale=1.25] (4) at (0.5,0.6) {$\Downarrow$};
  \node[scale=0.75] (5) at (0.58, 0.62) {$H$};
  \path[semithick,->]
    (1) edge node[scale=0.8, yshift=9pt] {$\kappa(\phi)$} (2)
    (1) edge node[scale=0.8, pos=0.6, xshift=-16pt] {$ob_{A'}$} (3)
    (2) edge node[scale=0.8, pos=0.6, xshift=14pt] {$\alpha'$} (3);
\end{tikzpicture}
\end{gathered}
\right)
\]
Since the category of equivalences between two (equivalent) $A$-gerbes is equivalent to $\Map\!\big(\mathfrak{X}, BA\big)$, we can find an $A$-torsor $\mathcal{L}$ such that $\phi \cong \mathcal{L}\gamma$. Hence $m\big( (Q, \phi, H), \mathcal{L} \big) = (Q, \mathcal{L}\phi) \cong (Q, \gamma)$.
\end{proof}

\begin{definition}
Let $[\alpha] \in \pi_0 \Map\!\big( \mathfrak{X}, B^2A \big)$. We define $L(\mathfrak{X})([\alpha])$ as the set of equivalence classes of liftings of the $A$-gerbe ${}_\alpha \mathfrak{X}$ to an $A'$-gerbe:
\[
L(\mathfrak{X})([\alpha]) := \Big\{ [\alpha'] \in \pi_0  \Map\!\big( \mathfrak{X}, B^2A' \big) \,\Big|\, ob_A^{A'}\!\!\left( [\alpha'] \right) = [\alpha] \Big\}
\]
\end{definition}

\begin{corollary} \label{Cor:UnionRectifiablePieces}
$\displaystyle \Map_{BA}\!\big({}_\alpha\mathfrak{X}, BH\big) \simeq \coprod_{[\alpha'] \in L(\mathfrak{X})([\alpha])} \Map_{BA}\!\big({}_\alpha\mathfrak{X}, BH\big)^{\alpha'\text{-rect}}$
\end{corollary}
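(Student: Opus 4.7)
The plan is to show that the $\infty$-groupoid $\Map_{BA}\!\big({}_\alpha\mathfrak{X}, BH\big)$ is partitioned by an invariant taking values in $L(\mathfrak{X})([\alpha])$, namely the equivalence class of the obstruction $A'$-gerbe of the underlying $K$-torsor.

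First I would define the invariant. Given a basic $H$-torsor $(Q, \gamma)$ on ${}_\alpha\mathfrak{X}$, presented as a pair with $Q \to \mathfrak{X}$ a $K$-torsor and $\gamma : ob_A(Q \to \mathfrak{X}) \xrightarrow{\sim} {}_\alpha\mathfrak{X}$ as in \eqref{eq:BasicHTorsor}, consider the obstruction $A'$-gerbe $ob_{A'}(Q \to \mathfrak{X}) \in \Map\!\big(\mathfrak{X}, B^2A'\big)_0$. Commutativity of the right-hand column-plus-top-row of \eqref{eq:TorsionDiagramH} gives $ob_A^{A'}\!\big(ob_{A'}(Q \to \mathfrak{X})\big) \simeq ob_A(Q \to \mathfrak{X}) \simeq {}_\alpha\mathfrak{X}$, so $[ob_{A'}(Q \to \mathfrak{X})] \in L(\mathfrak{X})([\alpha])$. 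This defines a function on $\pi_0$ of the mapping $\infty$-groupoid.

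Next I would verify that this function is locally constant, i.e.\ that morphisms in $\Map_{BA}\!\big({}_\alpha\mathfrak{X}, BH\big)$ preserve the invariant. A morphism $(Q_1,\gamma_1) \to (Q_2,\gamma_2)$ induces, via the forgetful map to $\Map(\mathfrak{X}, BK)$, an equivalence of the underlying $K$-torsors $Q_1 \simeq Q_2$; functoriality of $ob_{A'}$ then produces an equivalence $ob_{A'}(Q_1) \simeq ob_{A'}(Q_2)$, so $[ob_{A'}(Q_1)] = [ob_{A'}(Q_2)]$ in $L(\mathfrak{X})([\alpha])$. Consequently $\Map_{BA}\!\big({}_\alpha\mathfrak{X}, BH\big)$ decomposes as a coproduct indexed by $L(\mathfrak{X})([\alpha])$, where the summand at $[\alpha']$ is the full sub-$\infty$-groupoid on those $(Q,\gamma)$ with $[ob_{A'}(Q)] = [\alpha']$.

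Finally I would identify each summand with $\Map_{BA}\!\big({}_\alpha\mathfrak{X}, BH\big)^{\alpha'\text{-rect}}$ using Proposition \ref{proposition:RectifiableBasicHTorsors}: it states precisely that $(Q,\gamma)$ is $\alpha'$-rectifiable if and only if $ob_{A'}(Q) \simeq {}_{\alpha'}\mathfrak{X}$, i.e.\ if and only if its invariant equals $[\alpha']$. This makes the $\alpha'$-rectifiable subcategory depend only on the class $[\alpha']$ (so the indexing of the coproduct by $L(\mathfrak{X})([\alpha])$ is well-posed) and matches the summand at $[\alpha']$ exactly.

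The only step that requires any genuine care is the compatibility of morphisms with the invariant — and this is essentially automatic from the fact that the forgetful functor $\Map_{BA}\!\big({}_\alpha\mathfrak{X}, BH\big) \to \Map(\mathfrak{X}, BK)$ is well-defined on morphisms (cf.\ the limit presentation \eqref{eq:BasicHTorsors}) together with functoriality of the obstruction construction $ob_{A'}$. Since every non-trivial content of the statement is packaged into Proposition \ref{proposition:RectifiableBasicHTorsors}, the corollary is essentially a bookkeeping exercise about connected components once the invariant has been constructed.
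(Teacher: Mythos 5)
Your proposal is correct and matches the paper's argument: the paper's one-line proof likewise invokes Proposition \ref{proposition:RectifiableBasicHTorsors} to conclude that every basic $H$-torsor $(Q,\gamma)$ is $ob_{A'}(Q \to \mathfrak{X})$-rectifiable, which is exactly your invariant. You have simply spelled out the implicit bookkeeping — well-definedness of the invariant, compatibility with morphisms, and the identification of fibers with the rectifiable pieces — that the paper leaves to the reader.
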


\begin{proof}
By Proposition \ref{proposition:RectifiableBasicHTorsors}, every basic $H$-torsor $(Q, \gamma)$ is $ob_{A'}(Q \to \mathfrak{X})$-rectifiable.
\end{proof}

\noindent
Let us make two quick remarks about this sets of equivalence classes of liftings.
\begin{itemize}[topsep=-3pt,partopsep=0pt,parsep=2pt,itemsep=0pt]
\item The concept is only meaningful if ${}_\alpha \mathfrak{X}$ is $\kappa$-torsion, for otherwise $L(\mathfrak{X})([\alpha])$ is empty. Nevertheless, Corollary \ref{Cor:UnionRectifiablePieces} remains true in this case (cf. Lemma  \ref{lemma:NoBasicHTorsorsForNonKappaTorsionGerbes}).
\item Even if ${}_\alpha \mathfrak{X}$ is $\kappa$-torsion, it is not necessarily true that every element in $L(\mathfrak{X})([\alpha])$ is hit by an element of $\pi_0\Map\!\big( \mathfrak{X}, BK \big)$; that is, $\Map_{BA}\!\big({}_\alpha\mathfrak{X}, BH\big)^{\alpha'\text{-rect}}$ might be empty for some $[\alpha'] \in L(\mathfrak{X})([\alpha])$. We could therefore restrict the index set of the union in Corollary \ref{Cor:UnionRectifiablePieces} to the intersection of $L(\mathfrak{X})([\alpha])$ and the image of
\[
\pi_0\Map\!\big( \mathfrak{X}, BK \big) \xrightarrow{\;\; ob_{A'}\;\;} \pi_0\Map\!\big( \mathfrak{X}, B^2A' \big),
\]
without altering its validity.
\end{itemize}

\subsection{Lifting flat and Higgs \texorpdfstring{$A$}{A}-gerbes}
\label{section:LiftingFlatHiggsAGerbes}

\subsubsection{}

Choose a pair
\[
\theta' \in \Map\!\big( X_\mathrm{dR}, B^2A' \big)_0, \qquad \widetilde{\theta'} \in \Map\!\big( X_\mathrm{Dol}, B^2A' \big)_0
\]
consisting of a flat $A'$-gerbe on $X$ and a Higgs $A'$-gerbe on $X$ that are related to each other under the Hodge correspondence for gerbes (Proposition \ref{proposition:HodgeCorrespondenceGerbes}), and denote by
\[
\theta := ob^{A'}_A(\theta') \in \Map\!\big( X_\mathrm{dR}, B^2A \big)_0, \qquad \widetilde\theta := ob^{A'}_A(\widetilde{\theta'})  \in \Map\!\big( X_\mathrm{Dol}, B^2A \big)_0
\]
the induced ($\kappa$-torsion) $A$-gerbes.

\begin{definition}
\label{definition:SemistableBasicHPrimeTorsor}
A basic $H'$-torsor on ${}_{\widetilde{\theta'}}(X_\mathrm{Dol})$ is called \emph{semistable} if its underlying $K$-torsor on $X_\mathrm{Dol}$ is semistable and has zero first and second rational Chern classes. We denote by $\Map_{BA'}\!\big( {}_{\widetilde{\theta'}}(X_\mathrm{Dol}), BH' \big)^\mathrm{ss}$ the full subcategory of the category of basic $H'$-torsors on ${}_{\widetilde{\theta'}}(X_\mathrm{Dol})$ on the semistable objects.
\end{definition}

\begin{proposition} \label{proposition:CorrespondenceBasicHPrimeTorsors}
$\displaystyle \Map_{BA'}\!\big( {}_{\theta'}(X_\mathrm{dR}), BH' \big) \simeq \Map_{BA'}\!\big( {}_{\widetilde{\theta'}}(X_\mathrm{Dol}), BH' \big)^\mathrm{ss}$
\end{proposition}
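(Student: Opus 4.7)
The strategy is to unpack both sides of the claimed equivalence as homotopy limits, using \eqref{eq:BasicHTorsors} applied twice (once to $\mathfrak{X} = X_\mathrm{dR}$ and once to $\mathfrak{X} = X_\mathrm{Dol}$). Writing
\[
\Map_{BA'}\!\big( {}_{\theta'}(X_\mathrm{dR}), BH' \big) \simeq \lim \left\{
\begin{tikzpicture}[x=100pt,y=25pt,baseline={([yshift=-5pt]current bounding box.west)}]
  \node (12) at (1,1) {$\ast$};
  \node (21) at (0,0) {$\Map\!\big(X_\mathrm{dR}, BK\big)$};
  \node (22) at (1,0) {$\Map\!\big(X_\mathrm{dR}, B^2 A'\big)$};
  \path[semithick,->]
    (12) edge[shorten <=-2pt] node[pos=0.4,scale=0.75,xshift=6pt] {$\theta'$} (22)
    (21) edge node[pos=0.5,scale=0.75,yshift=8pt] {$ob_{A'}$} (22);
\end{tikzpicture}
\right\}
\]
and analogously on the Dolbeault side, the task reduces to producing a compatible equivalence between the two cospans that sends $\theta'$ to $\widetilde{\theta'}$.

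For the terms involving $B^2A'$, the crucial observation is that by construction of $A'$ as the kernel of $\kappa: A \to \GG_m^{\oplus r}$ in \eqref{eq:MasterDiagramH}, the group $A'$ contains no algebraic torus. Hence $A' \cong \GG_a^{\oplus m} \oplus F$ with $F$ a finite abelian group, and Proposition \ref{proposition:HodgeCorrespondenceGerbes} delivers an equivalence $\Map\!\big(X_\mathrm{dR}, B^2 A'\big) \simeq \Map\!\big(X_\mathrm{Dol}, B^2 A'\big)$ that, by the very choice of the pair $(\theta', \widetilde{\theta'})$, matches $\theta'$ to $\widetilde{\theta'}$. For the $K$-torsor terms, Theorem \ref{th:NonabelianHodgeTheoremGTorsors} (in the algebraic setting, applied via Proposition \ref{proposition:AlgebraicHodgeCorrespondence} thanks to the reductivity hypothesis on $K$) gives an equivalence
\[
\Map\!\big(X_\mathrm{dR}, BK\big) \simeq \Map\!\big(X_\mathrm{Dol}, BK\big)^{\mathrm{ss},0},
\]
where the right-hand side consists of semistable Higgs $K$-torsors with vanishing rational Chern numbers.

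The heart of the proof is to verify that the obstruction morphism $ob_{A'}: BK \to B^2A'$, which is a map of $\infty$-stacks induced by the central extension $1 \to A' \to H' \to K \to 1$, is natural with respect to the two Hodge correspondences above. This naturality should follow from the functoriality of both classical nonabelian Hodge theory (with respect to group homomorphisms, in particular with respect to the Puppe fiber sequence $BH' \to BK \to B^2A'$) and the abelian Hodge decomposition underlying Proposition \ref{proposition:HodgeCorrespondenceGerbes}; concretely, both equivalences arise from applying $\Map(\delta(-), -)$-type constructions and from analytification/GAGA identifications, which manifestly commute with maps of coefficient stacks. Once this compatibility square is in place, taking limits produces an equivalence between the full category of basic $H'$-torsors on ${}_{\theta'}(X_\mathrm{dR})$ and the subcategory of basic $H'$-torsors on ${}_{\widetilde{\theta'}}(X_\mathrm{Dol})$ whose underlying $K$-torsor lies in $\Map\!\big(X_\mathrm{Dol}, BK\big)^{\mathrm{ss},0}$ --- which is exactly the semistable full subcategory of Definition \ref{definition:SemistableBasicHPrimeTorsor}.

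The main obstacle is the naturality step: checking that under the classical Hodge equivalence for $K$-torsors, the obstruction $A'$-gerbe of a flat $K$-torsor corresponds, via the abelian Hodge identification for $A'$-gerbes, to the obstruction $A'$-gerbe of the associated Higgs $K$-torsor. One way to phrase this cleanly is to observe that $ob_{A'}$ is itself a morphism in the ambient $\infty$-topos coming from a short exact sequence of (sheaves of) algebraic groups, and that the Hodge correspondences of \S\ref{section:CaseOfSmoothProjectiveVariety} are morphisms of $\infty$-functors on the category of linear algebraic groups (reductive, respectively abelian-with-no-torus). Granting this categorical naturality --- which can ultimately be reduced to the naturality of the de Rham and Dolbeault stack constructions $\mathfrak{X} \mapsto \mathfrak{X}_\mathrm{dR}$, $\mathfrak{X} \mapsto \mathfrak{X}_\mathrm{Dol}$ and of the $\delta$-functor of Proposition \ref{proposition:Delta} --- the equivalence of limits follows formally.
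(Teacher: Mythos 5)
Your argument is essentially the proof the paper gives: its proof of Proposition~\ref{proposition:CorrespondenceBasicHPrimeTorsors} is a one-liner deferring to the ``obvious attempt'' of \S\ref{section:ObviousAttempt}, i.e.\ writing both sides as limits of cospans via \eqref{eq:BasicHTorsors}, applying Theorem~\ref{th:NonabelianHodgeTheoremGTorsors} (through Proposition~\ref{proposition:AlgebraicHodgeCorrespondence}) to the $\Map(-,BK)$ corner and Proposition~\ref{proposition:HodgeCorrespondenceGerbes} to the $\Map(-,B^2A')$ corner, which now applies because $A'$ contains no torus. You are more explicit than the paper in isolating the compatibility of $ob_{A'}$ with the two Hodge equivalences as the remaining naturality check; the paper leaves this implicit, so your version does not introduce any gap beyond what the paper itself glosses over.
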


\begin{proof}
This is just the obvious attempt that we described in \S\ref{section:ObviousAttempt} ---only now the Hodge correspondence does hold for $A'$-gerbes.
\end{proof}

\begin{definition}
\label{definition:SemistableRectifiableBasicHTorsor}
A $\widetilde{\theta'}$-rectifiable $H$-torsor on ${}_{\widetilde\theta} (X_\mathrm{Dol})$ is called \emph{semistable} if it is the image under the multiplication map $m$ in \eqref{eq:MultiplicationMap} of a semistable basic $H'$-torsor on ${}_{\widetilde{\theta'}}(X_\mathrm{Dol})$ and an $A$-torsor on $X_\mathrm{Dol}$ with zero first Chern class. We denote the category of these objects by
\[
\Map_{BA}\!\big( {}_{\widetilde\theta} (X_\mathrm{Dol}), BH \big)^{\widetilde{\theta'}\text{-rect}, \mathrm{ss}}
\]
\end{definition}

Observe that, owing to the discussion in \S\ref{section:SemistabilityForAbelianTorsors}, we do not need to explicitly require semistability for the $A$-torsor on $X_\mathrm{Dol}$ ---vanishing of the first Chern class is enough.

Since the multiplication map $m$ in \eqref{eq:MultiplicationMap} is a quotient map, there is the question of whether this definition of semistability depends on the choice of inverse image. Any two objects of $\Map_{BA'}\!\big( {}_{\widetilde{\theta'}}(X_\mathrm{Dol}), BH' \big) \times \Map\!\big( X_\mathrm{Dol}, BA \big)$ giving rise to the same basic $H$-torsor on ${}_{\widetilde\theta} (X_\mathrm{Dol})$ differ by the action of an $A'$-torsor on $X_\mathrm{Dol}$ as in \eqref{eq:ActionOfAPrimeTorsors}. However, the latter are always of degree zero because $A'$ contains no algebraic torus, and they remain of degree zero after taking their image under $ob^{A'}_A$. Hence, if one of the objects consists of a semistable basic $H'$-torsor on ${}_{\widetilde{\theta'}}(X_\mathrm{Dol})$ and an $A$-torsor on $X_\mathrm{Dol}$ with zero first Chern class, so does the other. In other words, the action of $\Map\!\big( X_\mathrm{Dol}, BA' \big)$ preserves the subcategory 
\[
\Map_{BA'}\!\big( {}_{\widetilde{\theta'}}(X_\mathrm{Dol}), BH' \big)^\mathrm{ss} \times \Map\!\big( X_\mathrm{Dol}, BA \big)^0 \subseteq \Map_{BA'}\!\big( {}_{\widetilde{\theta'}}(X_\mathrm{Dol}), BH' \big) \times \Map\!\big( X_\mathrm{Dol}, BA \big).
\]

\begin{corollary} \label{cor:HodgeCorrespondenceRectifiableTorsors}
$\Map_{BA}\!\big( {}_{\theta} (X_\mathrm{dR}), BH \big)^{\theta'\text{-rect}} \simeq \Map_{BA}\!\big( {}_{\widetilde\theta} (X_\mathrm{Dol}), BH \big)^{\widetilde{\theta'}\text{-rect}, \mathrm{ss}}$
\end{corollary}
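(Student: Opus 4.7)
The plan is to exhibit both sides as quotient categories and then apply three Hodge-type equivalences factor-by-factor. On the de Rham side the construction of \S\ref{section:Rectifiability} gives immediately
\[
\Map_{BA}\!\big({}_\theta (X_\mathrm{dR}), BH\big)^{\theta'\text{-rect}} \simeq \frac{\Map_{BA'}\!\big({}_{\theta'}(X_\mathrm{dR}), BH'\big) \times \Map\!\big(X_\mathrm{dR}, BA\big)}{\Map\!\big(X_\mathrm{dR}, BA'\big)}.
\]
On the Dolbeault side, the discussion following Definition \ref{definition:SemistableRectifiableBasicHTorsor} shows that the action of $\Map(X_\mathrm{Dol}, BA')$ preserves the subcategory of semistable $H'$-torsors paired with degree-zero $A$-torsors (since $A'$ contains no algebraic torus, every $A'$-torsor on $X_\mathrm{Dol}$ is automatically of degree zero, and so is its image in $A$). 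Hence the semistable rectifiable category is the analogous quotient with the semistable and degree-zero constraints imposed.

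With both sides presented as quotients, I would compare the three factors separately. Proposition \ref{proposition:CorrespondenceBasicHPrimeTorsors} takes care of the first factor. For the second factor, Theorem \ref{th:NonabelianHodgeTheoremGTorsors} applied to the abelian group $A$, together with the stability analysis of \S\ref{section:SemistabilityForAbelianTorsors}, yields $\Map(X_\mathrm{dR}, BA) \simeq \Map(X_\mathrm{Dol}, BA)^0$. For the acting group $A'$, Proposition \ref{proposition:HodgeCorrespondenceGerbes} applies (precisely because $A'$ contains no algebraic torus) and produces the unrestricted equivalence $\Map(X_\mathrm{dR}, BA') \simeq \Map(X_\mathrm{Dol}, BA')$.

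The remaining and most delicate step is to verify that these three equivalences intertwine the $\Map(-, BA')$-actions of \eqref{eq:ActionOfAPrimeTorsors}, so that quotients map to quotients. This reduces to the naturality of the classical nonabelian and abelian Hodge correspondences with respect to the change-of-structure-group maps dictated by diagram \eqref{eq:MasterDiagramH}, chiefly $A' \hookrightarrow H'$ and $A' \twoheadrightarrow A$. Such naturality is essentially built in to the statement of Theorem \ref{th:NonabelianHodgeTheoremGTorsors} and Proposition \ref{proposition:HodgeCorrespondenceGerbes} once one remembers these are equivalences of mapping $\infty$-groupoids rather than mere bijections on $\pi_0$. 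I expect this compatibility check to be the main obstacle: one must be careful that the quotients in play are taken in the $\infty$-categorical sense, so that an equivalence of groupoid objects suffices to induce an equivalence of geometric realizations. Granted it, the three factor-level equivalences descend to an equivalence of the quotient categories, which is exactly the corollary.
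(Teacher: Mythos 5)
Your proposal takes essentially the same route as the paper: present both sides as the quotients from \S\ref{section:Rectifiability} and apply the Hodge correspondences term-by-term (the paper cites only Theorem~\ref{th:NonabelianHodgeTheoremGTorsors}, though Proposition~\ref{proposition:CorrespondenceBasicHPrimeTorsors} is of course what handles the first factor, as you spell out). Your explicit flagging of the need to intertwine the $\Map(-,BA')$-actions is a genuine point that the paper's terse ``exactly correspond under Theorem~\ref{th:NonabelianHodgeTheoremGTorsors}'' also leaves unargued, so your level of rigor matches the source rather than falling short of it.
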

\vspace{-5pt}
\begin{proof}
Once again, the obvious approach works: the terms in the definitions of both of the categories on both sides of this equivalence,
\[
\Map_{BA}\!\big( {}_{\theta} (X_\mathrm{dR}), BH \big)^{\theta'\text{-rect}} \simeq \frac{\Map_{BA'}\!\big( {}_{\theta'}(X_\mathrm{dR}), BH' \big) \times \Map\!\big( X_\mathrm{dR}, BA \big)}{\Map\!\big( X_\mathrm{dR}, BA' \big)}
\]
and
\[
\Map_{BA}\!\big( {}_{\widetilde\theta} (X_\mathrm{Dol}), BH \big)^{\widetilde{\theta'}\text{-rect}, \mathrm{ss}} \simeq \frac{\Map_{BA'}\!\big( {}_{\widetilde{\theta'}}(X_\mathrm{Dol}), BH' \big)^\mathrm{ss} \times \Map\!\big( X_\mathrm{Dol}, BA \big)^0}{\Map\!\big( X_\mathrm{Dol}, BA' \big)},
\]
exactly correspond to each other under Theorem \ref{th:NonabelianHodgeTheoremGTorsors}.
\end{proof}

\subsubsection{}

In the last paragraph we proved something that puts us very close to the statement of Theorem \ref{theorem:MainTheorem}. Indeed, for any choice of a $\kappa$-torsion flat $A$-gerbe on $X$, 
$\theta \in \Map\!\big( X_\mathrm{dR}, B^2A \big)_0$, Corollary \ref{Cor:UnionRectifiablePieces} gives us the decomposition
\[
\Map_{BA}\!\big({}_\theta (X_\mathrm{dR}), BH\big) \simeq \coprod_{[\theta'] \in L(X_\mathrm{dR})([\theta])} \Map_{BA}\!\big( {}_{\theta} (X_\mathrm{dR}), BH \big)^{\theta'\text{-rect}}
\]
By Corollary \ref{cor:HodgeCorrespondenceRectifiableTorsors}, each one of the pieces on the right hand side of the last equation is equivalent to the category of semistable $\widetilde{\theta'}$-rectifiable basic $H$-torsors on ${}_{ob^{A'}_A (\widetilde{\theta'})} (X_\mathrm{Dol})$, where $\widetilde{\theta'}$ is related to $\theta'$ through the Hodge correspondence for gerbes. Our first remark is that all of the $[ob^{A'}_A (\widetilde{\theta'})]$ coincide. 

\begin{lemma} \label{lemma:CorrespondenceComponents}
Let $[\theta] \in \pi_0\Map\!\big( X_\mathrm{dR}, B^2A \big)$ be an equivalence class of $\kappa$-torsion flat $A$-gerbes on $X$. There is a unique equivalence class of $\kappa$-torsion Higgs $A$-gerbes on $X$, $[\widetilde\theta] \in \pi_0\Map\!\big( X_\mathrm{Dol}, B^2A \big)$, for which there is a injective function
\[
L(X_\mathrm{dR})([\theta]) \hookrightarrow L(X_\mathrm{Dol})([\widetilde\theta])
\]
\vspace{-20pt}
\end{lemma}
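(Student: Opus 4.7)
The plan is to take the function to be the one induced by Proposition \ref{proposition:HodgeCorrespondenceGerbes} applied to the group $A'$ ---which satisfies the hypotheses of that proposition, as noted below \eqref{eq:MasterDiagramH}--- and to define $[\widetilde\theta]$ in terms of any fixed lift. Given a lift $[\theta'] \in L(X_\mathrm{dR})([\theta])$, let $[\widetilde{\theta'}] \in \pi_0\Map\!\big(X_\mathrm{Dol}, B^2 A'\big)$ be its image under the bijection
\[
\mathrm{Hodge}_{A'}\colon \pi_0\Map\!\big(X_\mathrm{dR}, B^2 A'\big) \xrightarrow{\;\sim\;} \pi_0\Map\!\big(X_\mathrm{Dol}, B^2 A'\big)
\]
coming from Proposition \ref{proposition:HodgeCorrespondenceGerbes}, and set $[\widetilde\theta] := ob_A^{A'}([\widetilde{\theta'}])$. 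The function $[\theta'] \mapsto [\widetilde{\theta'}]$ is then automatically injective because $\mathrm{Hodge}_{A'}$ is a bijection; the content of the lemma is therefore that $[\widetilde\theta]$ does not depend on the choice of $[\theta']$.

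To prove this, two lifts $[\theta'_1], [\theta'_2] \in L(X_\mathrm{dR})([\theta])$ of $[\theta]$ differ by an element in the kernel of $ob_A^{A'}$, which by the long exact sequence of $\pi_0$ attached to the fiber sequence $B\GG_m^{\oplus r} \xrightarrow{\;c\;} B^2A' \xrightarrow{\;ob_A^{A'}\;} B^2A$ coming from \eqref{eq:TorsionDiagramH} is the image of the connecting map $c_{X_\mathrm{dR}}\colon \pi_0\Map\!\big(X_\mathrm{dR}, B\GG_m^{\oplus r}\big) \to \pi_0\Map\!\big(X_\mathrm{dR}, B^2A'\big)$. I thus need to verify that $\mathrm{Hodge}_{A'}$ sends this image into the image of the analogous $c_{X_\mathrm{Dol}}$; equivalently, that $ob_A^{A'}([\widetilde{\theta'_1}]) = ob_A^{A'}([\widetilde{\theta'_2}])$ on the Dolbeault side.

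The hard part, and the only place where a Hodge-theoretic ingredient is really used, is this compatibility of $\mathrm{Hodge}_{A'}$ with the connecting map $c$. My approach would be to exploit the fact that the extension class of $1 \to A' \to A \to \GG_m^{\oplus r} \to 1$ in $\mathrm{Ext}^1(\GG_m^{\oplus r}, A')$ is concentrated on the finite part of $A'$: writing $A' = F' \times \GG_a^{\oplus s}$ with $F'$ finite abelian, the $\GG_a$-component of the class vanishes because $\mathrm{Ext}^1(\GG_m^{\oplus r}, \GG_a^{\oplus s}) = 0$ ---any extension of a torus by a commutative unipotent algebraic group splits, since the maximal torus of the total space must map isomorphically onto the quotient. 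Consequently $c$ factors through the inclusion $B^2F' \hookrightarrow B^2A'$. Since $\mathrm{Hodge}_{A'}$ is compatible with the product decomposition $A' = F' \times \GG_a^{\oplus s}$ (as in the proof of Proposition \ref{proposition:HodgeCorrespondenceGerbes}), and since its restriction to $F'$-gerbes is the canonical identification of both $\pi_0\Map\!\big(X_\mathrm{dR}, B^2F'\big)$ and $\pi_0\Map\!\big(X_\mathrm{Dol}, B^2F'\big)$ with $H^2(X, F')$ induced by the quotient maps $X \to X_\mathrm{dR}$, $X \to X_\mathrm{Dol}$, the compatibility reduces to noting that both $c_{X_\mathrm{dR}}$ and $c_{X_\mathrm{Dol}}$ arise as pullbacks of the same universal map $B\GG_m^{\oplus r} \to B^2F'$ along those quotient maps. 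This gives the required independence of $[\widetilde\theta]$ on the chosen lift, and completes the argument.
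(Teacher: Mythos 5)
Your overall strategy is the same as the paper's: fix a lift $[\theta']$, transport it across the Hodge correspondence for $A'$-gerbes (Proposition \ref{proposition:HodgeCorrespondenceGerbes}), push forward by $ob^{A'}_A$, and observe that the only content is independence of the chosen lift. Your reduction to the finite part $F'$ of $A'$ --- using that $\mathrm{Ext}^1(\GG_m^{\oplus r}, \GG_a^{\oplus s}) = 0$ forces the extension class of $1 \to A' \to A \to \GG_m^{\oplus r} \to 1$ to be concentrated on $F'$, so that the connecting map $B\GG_m^{\oplus r} \to B^2A'$ factors through $B^2F'$ --- is a genuinely nice, explicit step that the paper omits; it replaces the paper's implicit assertion that the entire diagram \eqref{eq:Liftings} commutes.

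However, the final sentence leaves a real gap. After the reduction, you need to show that the outer square
\[
\begin{array}{ccc}
\pi_0\Map\!\big(X_\mathrm{dR}, B\GG_m^{\oplus r}\big) & \xrightarrow{\;\mathrm{Hodge}\;} & \pi_0\Map\!\big(X_\mathrm{Dol}, B\GG_m^{\oplus r}\big)^0 \\
\downarrow c & & \downarrow c \\
\pi_0\Map\!\big(X_\mathrm{dR}, B^2F'\big) & \xrightarrow{\;\sim\;} & \pi_0\Map\!\big(X_\mathrm{Dol}, B^2F'\big)
\end{array}
\]
commutes. Using that both lower corners are canonically $H^2(X, F')$ via restriction along $X \to X_\mathrm{dR}$ and $X \to X_\mathrm{Dol}$, and that $c$ is natural in the base, this is equivalent to the identity $c_X\big(p_\mathrm{dR}^*\,L\big) = c_X\big(p_\mathrm{Dol}^*\,\mathrm{Hodge}(L)\big)$ in $H^2(X, F')$ for every flat $\GG_m^{\oplus r}$-torsor $L$. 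The observation that ``both $c_{X_\mathrm{dR}}$ and $c_{X_\mathrm{Dol}}$ arise as pullbacks of the same universal map'' only gives the naturality identity $c_{X_\bullet} = (p_\bullet^*)^{-1} \circ c_X \circ p_\bullet^*$; it does \emph{not} compare $p_\mathrm{dR}^*\,L$ with $p_\mathrm{Dol}^*\,\mathrm{Hodge}(L)$, which are \emph{different} line bundles on $X$. The nonabelian Hodge correspondence for line bundles changes the holomorphic structure of the underlying bundle, so naturality alone cannot close this square. What you actually need is that the correspondence preserves the underlying topological (equivalently, $C^\infty$) line bundle --- and hence the torsion first Chern class $c_1 \in H^2(X, \ZZ)_\mathrm{tors}$, which is all that $c_X$ sees. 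This does hold, because both $(L, \nabla)$ and its Higgs partner $(E, \phi)$ arise from the same harmonic metric on the same underlying smooth bundle, but it is a Hodge-theoretic input, not a formal consequence of the Puppe sequence. Add this step and your argument is complete, and arguably more explicit than the paper's assertion of the commutativity of \eqref{eq:Liftings}.
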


\begin{proof}
Consider the following diagram, constructed out of the Puppe sequence of the exact sequence $0 \to A' \to A \to \GG_m^{\oplus r} \to 0$ of abelian linear algebraic groups over $\CC$ coming from \eqref{eq:MasterDiagramH}.
\begin{equation} \label{eq:Liftings}
\begin{gathered}
\begin{tikzpicture}[x=300pt, y=120pt]
  \node (11) at (0,1) {$\pi_0\Map\!\big( X_\mathrm{dR}, BA \big)$};
  \node (12) at (0.5,1) {$\pi_0\Map\!\big( X_\mathrm{Dol}, BA \big)^0$};
  \node (13) at (1,1) {$\pi_0\Map\!\big( X_\mathrm{Dol}, BA \big)$};
  \node (21) at (0,0.666) {$\pi_0\Map\!\big( X_\mathrm{dR}, B\GG_m^{\oplus r} \big)$};
  \node (22) at (0.5,0.666) {$\pi_0\Map\!\big( X_\mathrm{Dol}, B\GG_m^{\oplus r} \big)^0$};
  \node (23) at (1,0.666) {$\pi_0\Map\!\big( X_\mathrm{Dol}, B\GG_m^{\oplus r} \big)$};
  \node (31) at (0,0.333) {$\pi_0\Map\!\big( X_\mathrm{dR}, B^2A' \big)$};
  \node (32) at (0.5,0.333) {$\pi_0\Map\!\big( X_\mathrm{Dol}, B^2A' \big)$};
  \node (33) at (1,0.333) {$\pi_0\Map\!\big( X_\mathrm{Dol}, B^2A' \big)$};
  \node (41) at (0,0) {$\pi_0\Map\!\big( X_\mathrm{dR}, B^2A \big)$};
  \node (42) at (0.5,0) {$\pi_0\Map\!\big( X_\mathrm{Dol}, B^2A \big)$};
  \node (43) at (1,0) {$\pi_0\Map\!\big( X_\mathrm{Dol}, B^2A \big)$};
  \node[scale=1.5] (cartesian) at (0.54,0.86) {$\ulcorner$};
  \draw[double distance=1.5pt] (32) -- (33); 
  \draw[double distance=1.5pt] (42) -- (43);
  \path[semithick,->]
    (11) edge node[scale=0.8,yshift=8pt] {$\cong$} (12)
    (11) edge (21)
    (12) edge (22)
    (13) edge (23)
    (21) edge node[scale=0.8,yshift=8pt] {$\cong$} (22)
    (21) edge (31)
    (22) edge (32)
    (23) edge (33)
    (21) edge (22)
    (31) edge node[scale=0.8,yshift=8pt] {$\cong$} (32)
    (31) edge node[scale=0.8,xshift=14pt] {$ob^{A'}_A$} (41)
    (32) edge node[scale=0.8,xshift=14pt] {$ob^{A'}_A$} (42)
    (33) edge node[scale=0.8,xshift=14pt] {$ob^{A'}_A$} (43);
  \path[semithick,{Hooks[right]}->]
    (12) edge (13)
    (22) edge (23);
\end{tikzpicture}
\end{gathered}
\end{equation}
The first and last columns are, by construction, exact sequences of abelian groups; although the middle one is not exact, the composition of any two maps in it is zero. The maps from the first column to the second are provided by the appropriate Hodge correspondences.

For $[\theta] \in \pi_0\Map\!\big( X_\mathrm{dR}, B^2A \big)$, look at all its possible liftings through $ob^{A'}_A$ ---that is, at $L(X_\mathrm{dR})([\theta])$. Since any two elements in the latter set differ by an element in $\pi_0\Map\!\big( X_\mathrm{dR}, B\GG_m^{\oplus r} \big)$, their images under the Hodge correspondence differ by an element of $\pi_0\Map\!\big( X_\mathrm{Dol}, B\GG_m^{\oplus r} \big)^0$. These images then map to a well-defined element $[\widetilde\theta] \in \pi_0\Map\!\big( X_\mathrm{Dol}, B^2A \big)$, thus defining a function $L(X_\mathrm{dR})([\theta]) \to L(X_\mathrm{Dol})([\widetilde\theta])$. The injectivity of the latter is clear.
\end{proof}

This last map is in general not surjective. Indeed, the upper right square in \eqref{eq:Liftings} being cartesian, we have
\begin{multline*}
L(X_\mathrm{dR})([\theta]) \cong \frac{\pi_0\Map\!\big( X_\mathrm{dR}, B\GG_m^{\oplus r} \big)}{\pi_0\Map\!\big( X_\mathrm{dR}, BA \big)} \cong \frac{\pi_0\Map\!\big( X_\mathrm{Dol}, B\GG_m^{\oplus r} \big)^0}{\pi_0\Map\!\big( X_\mathrm{Dol}, BA \big)^0} \\ \hookrightarrow  \frac{\pi_0\Map\!\big( X_\mathrm{Dol}, B\GG_m^{\oplus r} \big)}{\pi_0\Map\!\big( X_\mathrm{Dol}, BA \big)} \cong L(X_\mathrm{Dol})([\widetilde\theta])
\end{multline*}
However, it becomes an isomorphism when restricted to the images under $ob_{A'}$ of the category of $K$-torsors on $X_\mathrm{dR}$, on one side, and that of the category of semistable $K$-torsors on $X_\mathrm{Dol}$ with zero first and second rational Chern classes, on the other:
\begin{multline*}
\operatorname{im} \left( \pi_0\Map\!\big( X_\mathrm{dR}, BK \big) \xrightarrow{\;\; ob_{A'}\;\;} \pi_0\Map\!\big( X_\mathrm{dR}, B^2A' \big) \right) \cap L(X_\mathrm{dR})([\theta]) \\
\cong \operatorname{im} \left( \pi_0\Map\!\big( X_\mathrm{Dol}, BK \big)^{\mathrm{ss}, 0} \xrightarrow{\;\; ob_{A'}\;\;} \pi_0\Map\!\big( X_\mathrm{Dol}, B^2A' \big) \right) \cap L(X_\mathrm{Dol})([\widetilde\theta])
\end{multline*}
This fact follows easily from the commutative diagram
\[
\begin{tikzpicture}[x=140pt, y=45pt]
  \node (11) at (0,1) {$\pi_0\Map\!\big( X_\mathrm{dR}, BK \big)$};
  \node (12) at (1,1) {$\pi_0\Map\!\big( X_\mathrm{Dol}, BK \big)^{\mathrm{ss}, 0}$};
  \node (21) at (0,0) {$\pi_0\Map\!\big( X_\mathrm{dR}, B^2A' \big)$};
  \node (22) at (1,0) {$\pi_0\Map\!\big( X_\mathrm{Dol}, B^2A' \big)$};
  \path[semithick,->]
    (11) edge node[scale=0.8,yshift=8pt] {$\cong$} (12)
    (11) edge node[scale=0.8,xshift=14pt] {$ob_{A'}$}(21)
    (12) edge node[scale=0.8,xshift=14pt] {$ob_{A'}$} (22)
    (21) edge node[scale=0.8,yshift=8pt] {$\cong$} (22);
\end{tikzpicture}
\]
mediated by the nonabelian Hodge correspondence, and the Hodge correspondence for gerbes. Because of the last remark of \S\ref{section:Rectifiability}, this observation finishes the proof of Theorem \ref{theorem:MainTheorem}. In particular, it allows us to find, given an equivalence class of $\kappa$-torsion Higgs $A$-gerbes on $X$, an equivalence class of $\kappa$-torsion flat $A$-gerbes for which the conclusion of Lemma \ref{lemma:CorrespondenceComponents} holds.

\nocite{arXiv1207.0249,MR0354653,Escourido08}
\bibliographystyle{amsalpha_manualtags}
\bibliography{TwistedNAHT}

\providecommand{\bysame}{\leavevmode\hbox to3em{\hrulefill}\thinspace}
\providecommand{\MR}{\relax\ifhmode\unskip\space\fi MR }
\providecommand{\MRhref}[2]{%
  \href{http://www.ams.org/mathscinet-getitem?mr=#1}{#2}
}
\providecommand{\href}[2]{#2}
\begin{thebibliography}{DPOR04}

\bibitem[SGA4$_\mathrm{I}$]{MR0354652}
Michael Artin, Alexander Grothendieck, and Jean-Louis Verdier (eds.),
  \emph{{S}\'eminaire de {G}\'eom\'etrie {A}lg\'ebrique du {B}ois-{M}arie
  1963--1964. {T}h\'eorie des topos et cohomologie {\'e}tale des sch\'emas
  ({SGA} 4). {T}ome 1: Th\'eorie des topos}, Lecture Notes in Mathematics, Vol.
  269, Springer-Verlag, Berlin, 1972. \MR{0354652 (50 \#7130)}

\bibitem[SGA4$_\mathrm{II}$]{MR0354653}
Michael Artin, Alexander Grothendieck, and Jean-Louis Verdier (eds.),
  \emph{{S}\'eminaire de {G}\'eom\'etrie {A}lg\'ebrique du {B}ois-{M}arie
  1963--1964. {T}h\'eorie des topos et cohomologie {\'e}tale des sch\'emas
  ({SGA} 4). {T}ome 2}, Lecture Notes in Mathematics, Vol. 270,
  Springer-Verlag, Berlin, 1972. \MR{0354653 (50 \#7131)}

\bibitem[BB93]{MR1237825}
Alexander~A. Be{\u\i}linson and Joseph~N. Bernstein, \emph{A proof of {J}antzen
  conjectures}, I. {M}. {G}el$'$fand {S}eminar, Adv. Soviet Math., vol.~16,
  Amer. Math. Soc., Providence, RI, 1993, pp.~1--50. \MR{1237825 (95a:22022)}

\bibitem[BBBP07]{MR2309993}
Oren Ben-Bassat, Jonathan~L. Block, and Tony Pantev, \emph{Non-commutative tori
  and {F}ourier-{M}ukai duality}, Compos. Math. \textbf{143} (2007), no.~2,
  423--475. \MR{2309993 (2008h:53157)}

\bibitem[BL04]{MR2068521}
John~C. Baez and Aaron~D. Lauda, \emph{Higher-dimensional algebra. {V}.
  2-groups}, Theory Appl. Categ. \textbf{12} (2004), 423--491. \MR{2068521
  (2005m:18005)}

\bibitem[Bla01]{MR1876801}
Benjamin~A. Blander, \emph{Local projective model structures on simplicial
  presheaves}, $K$-Theory \textbf{24} (2001), no.~3, 283--301. \MR{1876801
  (2003c:18014)}

\bibitem[Bor91]{MR1102012}
Armand Borel, \emph{Linear algebraic groups}, second ed., Graduate Texts in
  Mathematics, vol. 126, Springer-Verlag, New York, 1991. \MR{1102012
  (92d:20001)}

\bibitem[Bre90]{MR1086889}
Lawrence Breen, \emph{Bitorseurs et cohomologie non ab\'elienne}, The
  {G}rothendieck {F}estschrift, {V}ol.\ {I}, Progr. Math., vol.~86,
  Birkh\"auser Boston, Boston, MA, 1990, pp.~401--476. \MR{1086889 (92m:18019)}

\bibitem[Bry08]{MR2362847}
Jean-Luc Brylinski, \emph{Loop spaces, characteristic classes and geometric
  quantization}, Modern Birkh\"auser Classics, Birkh\"auser Boston Inc.,
  Boston, MA, 2008, Reprint of the 1993 edition. \MR{2362847 (2008h:53155)}

\bibitem[C{\u a}l00]{MR2700538}
Andrei~H. C{\u a}ld{\u a}raru, \emph{Derived categories of twisted sheaves on
  {C}alabi-{Y}au manifolds}, ProQuest LLC, Ann Arbor, MI, 2000, Thesis
  (Ph.D.)--Cornell University. \MR{2700538}

\bibitem[Cha98]{ChatterjeeThesis}
David~S. Chatterjee, \emph{On gerbs}, Ph.D. thesis, Trinity College, University
  of Cambridge, June 1998.

\bibitem[DG70]{MR0302656}
Michel Demazure and Pierre Gabriel, \emph{Groupes alg\'ebriques. {T}ome {I}:
  {G}\'eom\'etrie alg\'ebrique, g\'en\'eralit\'es, groupes commutatifs}, Masson
  \& Cie, \'Editeur, Paris, 1970, Avec un appendice {{\i}t Corps de classes
  local} par Michiel Hazewinkel. \MR{0302656 (46 \#1800)}

\bibitem[DHI04]{MR2034012}
Daniel Dugger, Sharon Hollander, and Daniel~C. Isaksen, \emph{Hypercovers and
  simplicial presheaves}, Math. Proc. Cambridge Philos. Soc. \textbf{136}
  (2004), no.~1, 9--51. \MR{2034012 (2004k:18007)}

\bibitem[dJ03]{deJong03}
Aise~J. de~Jong, \emph{A result of {G}abber},
  \url{http://www.math.columbia.edu/~dejong/papers/2-gabber.pdf}, 2003.

\bibitem[DM69]{MR0262240}
Pierre~R. Deligne and David~B. Mumford, \emph{The irreducibility of the space
  of curves of given genus}, Inst. Hautes \'Etudes Sci. Publ. Math. (1969),
  no.~36, 75--109. \MR{0262240 (41 \#6850)}

\bibitem[DP08]{MR2399730}
Ron~Y. Donagi and Tony Pantev, \emph{Torus fibrations, gerbes, and duality},
  Mem. Amer. Math. Soc. \textbf{193} (2008), no.~901, vi+90, With an appendix
  by Dmitry Arinkin. \MR{2399730 (2010a:14028)}

\bibitem[DP09]{MR2537083}
\bysame, \emph{Geometric {L}anglands and non-abelian {H}odge theory}, Surveys
  in differential geometry. {V}ol. {XIII}. {G}eometry, analysis, and algebraic
  geometry: forty years of the {J}ournal of {D}ifferential {G}eometry, Surv.
  Differ. Geom., vol.~13, Int. Press, Somerville, MA, 2009, pp.~85--116.
  \MR{2537083 (2011a:14021)}

\bibitem[DPOR04]{MR2045884}
Ron~Y. Donagi, Tony Pantev, Burt~A. Ovrut, and Ren{\'e} Reinbacher, \emph{{$\rm
  SU(4)$} instantons on {C}alabi-{Y}au threefolds with {${\mathbb
  Z}_2\times{\mathbb Z}_2$} fundamental group}, J. High Energy Phys. (2004),
  no.~1, 022, 58 pp. (electronic). \MR{2045884 (2004k:14066)}

\bibitem[Dug01]{MR1870515}
Daniel Dugger, \emph{Universal homotopy theories}, Adv. Math. \textbf{164}
  (2001), no.~1, 144--176. \MR{1870515 (2002k:18021)}

\bibitem[EM08]{Escourido08}
Juan~Manuel Escourido~Muriel, \emph{Aprenda a maleducar un cerdo en diez
  simples pasos}, Visi{\'o}n Libros, 2008.

\bibitem[Gai11]{NotesGL_Stacks}
Dennis Gaitsgory, \emph{Notes on {G}eometric {L}anglands: {S}tacks},
  \url{http://www.math.harvard.edu/~gaitsgde/GL/Stackstext.pdf}, August 2011.

\bibitem[Gir71]{MR0344253}
Jean Giraud, \emph{Cohomologie non ab\'elienne}, Springer-Verlag, Berlin, 1971,
  Die Grundlehren der mathematischen Wissenschaften, Band 179. \MR{0344253 (49
  \#8992)}

\bibitem[GR14]{arXiv1406.1693}
Alberto {Garcia-Raboso} and Steven Rayan, \emph{{I}ntroduction to {N}onabelian
  {H}odge {T}heory: flat connections, {H}iggs bundles and complex variations of
  {H}odge structure}, To appear in the Fields Institute Monograph Series.
  \href{http://arxiv.org/abs/1406.1693}{\texttt{arXiv:1406.1693 [math.CV]}},
  June 2014.

\bibitem[Gro68a]{MR0269663}
Alexander Grothendieck, \emph{Crystals and the de {R}ham cohomology of
  schemes}, Dix {E}xpos\'es sur la {C}ohomologie des {S}ch\'emas,
  North-Holland, Amsterdam, 1968, pp.~306--358. \MR{0269663 (42 \#4558)}

\bibitem[Gro68b]{MR0244269}
\bysame, \emph{Le groupe de {B}rauer.}, Dix {E}xpos\'es sur la {C}ohomologie
  des {S}ch\'emas, North-Holland, Amsterdam, 1968, pp.~46--188. \MR{0244269 (39
  \#5586a)}

\bibitem[Gro10]{arXiv1007.2925}
Moritz Groth, \emph{A short course on $\infty$-categories},
  \href{http://arxiv.org/abs/1007.2925}{\texttt{arXiv:1007.2925 [math.AT]}},
  July 2010.

\bibitem[Hoy13]{NoteEtaleHomotopy}
Marc Hoyois, \emph{A note on {\'e}tale homotopy},
  \url{http://math.northwestern.edu/\~hoyois/papers/etalehomotopy.pdf},
  February 2013.

\bibitem[HS05]{MR2179782}
Daniel Huybrechts and Paolo Stellari, \emph{Equivalences of twisted {$K3$}
  surfaces}, Math. Ann. \textbf{332} (2005), no.~4, 901--936. \MR{2179782
  (2007b:14083)}

\bibitem[HS06]{MR2310257}
\bysame, \emph{Proof of {C}\u ald\u araru's conjecture}, Moduli spaces and
  arithmetic geometry, Adv. Stud. Pure Math., vol.~45, Math. Soc. Japan, Tokyo,
  2006, pp.~31--42. \MR{2310257 (2008f:14035)}

\bibitem[Jar87]{MR906403}
John~F. Jardine, \emph{Simplicial presheaves}, J. Pure Appl. Algebra
  \textbf{47} (1987), no.~1, 35--87. \MR{906403 (88j:18005)}

\bibitem[Kon95]{MR1403918}
Maxim Kontsevich, \emph{Homological algebra of mirror symmetry}, Proceedings of
  the {I}nternational {C}ongress of {M}athematicians, {V}ol.\ 1, 2 ({Z}\"urich,
  1994) (Basel), Birkh\"auser, 1995, pp.~120--139. \MR{1403918 (97f:32040)}

\bibitem[KR00]{MR1731635}
Maxim Kontsevich and Alexander~L. Rosenberg, \emph{Noncommutative smooth
  spaces}, The {G}elfand {M}athematical {S}eminars, 1996--1999, Gelfand Math.
  Sem., Birkh\"auser Boston, Boston, MA, 2000, pp.~85--108. \MR{1731635
  (2001c:14005)}

\bibitem[Law05]{MR2125786}
F.~William Lawvere, \emph{Categories of spaces may not be generalized spaces as
  exemplified by directed graphs}, Repr. Theory Appl. Categ. (2005), no.~9,
  1--7, Reprinted from Rev. Colombiana Mat. {{\bf{2}}0} (1986), no. 3-4,
  179--185 [MR0948966]. \MR{2125786}

\bibitem[Law07]{MR2369017}
\bysame, \emph{Axiomatic cohesion}, Theory Appl. Categ. \textbf{19} (2007), No.
  3, 41--49. \MR{2369017 (2009h:18004)}

\bibitem[Lie07]{MR2309155}
Max~D. Lieblich, \emph{Moduli of twisted sheaves}, Duke Math. J. \textbf{138}
  (2007), no.~1, 23--118. \MR{2309155 (2008d:14018)}

\bibitem[Lur09]{MR2522659}
Jacob~A. Lurie, \emph{Higher topos theory}, Annals of Mathematics Studies, vol.
  170, Princeton University Press, Princeton, NJ, 2009. \MR{2522659
  (2010j:18001)}

\bibitem[Lur12]{HigherAlgebra}
\bysame, \emph{Higher algebra},
  \url{http://www.math.harvard.edu/\~lurie/papers/higheralgebra.pdf}, 2012.

\bibitem[LVdB05]{MR2183254}
Wendy Lowen and Michel Van~den Bergh, \emph{Hochschild cohomology of abelian
  categories and ringed spaces}, Adv. Math. \textbf{198} (2005), no.~1,
  172--221. \MR{2183254 (2007d:18017)}

\bibitem[LVdB06]{MR2238922}
\bysame, \emph{Deformation theory of abelian categories}, Trans. Amer. Math.
  Soc. \textbf{358} (2006), no.~12, 5441--5483 (electronic). \MR{2238922
  (2008b:18016)}

\bibitem[Mil80]{MR559531}
James~S. Milne, \emph{\'{E}tale cohomology}, Princeton Mathematical Series,
  vol.~33, Princeton University Press, Princeton, N.J., 1980. \MR{559531
  (81j:14002)}

\bibitem[Mil12a]{milneAGS}
\bysame, \emph{Basic theory of affine group schemes},
  \url{http://www.jmilne.org/math/}, 2012.

\bibitem[Mil12b]{milneRG}
\bysame, \emph{Reductive groups}, \url{http://www.jmilne.org/math/}, 2012.

\bibitem[Mur96]{MR1405064}
Michael~K. Murray, \emph{Bundle gerbes}, J. London Math. Soc. (2) \textbf{54}
  (1996), no.~2, 403--416. \MR{1405064 (98a:55016)}

\bibitem[NSS12a]{arXiv1207.0248}
Thomas Nikolaus, Urs Schreiber, and Danny Stevenson, \emph{Principal
  $\infty$-bundles - {G}eneral theory},
  \href{http://arxiv.org/abs/1207.0248}{\texttt{arXiv:1207.0248 [math.AT]}},
  July 2012.

\bibitem[NSS12b]{arXiv1207.0249}
\bysame, \emph{Principal $\infty$-bundles - {P}resentations},
  \href{http://arxiv.org/abs/1207.0249}{\texttt{arXiv:1207.0249 [math.AT]}},
  July 2012.

\bibitem[Saw12]{arXiv1209.3202}
Justin Sawon, \emph{{F}ourier-{M}ukai transforms, mirror symmetry, and
  generalized {K}3 surfaces},
  \href{http://arxiv.org/abs/1209.3202}{\texttt{arXiv:1209.3202 [math.DG]}},
  September 2012.

\bibitem[{S}ch13]{arXiv1310.7930}
U.~{S}chreiber, \emph{{D}ifferential cohomology in a cohesive $\infty$-topos},
  \href{http://arxiv.org/abs/1310.7930}{\texttt{arXiv:1310.7930 [math-ph]}},
  October 2013.

\bibitem[Ser56]{MR0082175}
Jean-Pierre Serre, \emph{G\'eom\'etrie alg\'ebrique et g\'eom\'etrie
  analytique}, Ann. Inst. Fourier, Grenoble \textbf{6} (1955--1956), 1--42.
  \MR{0082175 (18,511a)}

\bibitem[Sim92]{MR1179076}
Carlos~T. Simpson, \emph{Higgs bundles and local systems}, Inst. Hautes
  \'Etudes Sci. Publ. Math. (1992), no.~75, 5--95. \MR{1179076 (94d:32027)}

\bibitem[Sim96a]{MR1397992}
\bysame, \emph{Homotopy over the complex numbers and generalized de {R}ham
  cohomology}, Moduli of vector bundles ({S}anda, 1994; {K}yoto, 1994), Lecture
  Notes in Pure and Appl. Math., vol. 179, Dekker, New York, 1996,
  pp.~229--263. \MR{1397992 (97f:14020)}

\bibitem[Sim96b]{arXiv_qalg_9609004}
\bysame, \emph{The topological realization of a simplicial presheaf},
  \href{http://arxiv.org/abs/q-alg/9609004}{\texttt{arXiv:q-alg/9609004}},
  September 1996.

\bibitem[Sim97a]{MR1492538}
\bysame, \emph{The {H}odge filtration on nonabelian cohomology}, Algebraic
  geometry---{S}anta {C}ruz 1995, Proc. Sympos. Pure Math., vol.~62, Amer.
  Math. Soc., Providence, RI, 1997, pp.~217--281. \MR{1492538 (99g:14028)}

\bibitem[Sim97b]{arXiv_alggeom_9712020}
\bysame, \emph{Secondary {K}odaira-{S}pencer classes and nonabelian {D}olbeault
  cohomology},
  \href{http://arxiv.org/abs/alg-geom/9712020}{\texttt{arXiv:alg-geom/9712020}},
  December 1997.

\bibitem[Sim02]{MR1978713}
\bysame, \emph{Algebraic aspects of higher nonabelian {H}odge theory}, Motives,
  polylogarithms and {H}odge theory, {P}art {II} ({I}rvine, {CA}, 1998), Int.
  Press Lect. Ser., vol.~3, Int. Press, Somerville, MA, 2002, pp.~417--604.
  \MR{1978713 (2004m:14010)}

\bibitem[SSS12]{MR2966944}
Hisham Sati, Urs Schreiber, and Jim Stasheff, \emph{Twisted differential string
  and fivebrane structures}, Comm. Math. Phys. \textbf{315} (2012), no.~1,
  169--213. \MR{2966944}

\bibitem[ST97]{ST_deRhamInftyStacks}
Carlos~T. Simpson and Constantin Teleman, \emph{de {R}ham's theorem for
  $\infty$-stacks}, \url{http://math.berkeley.edu/\~teleman/math/simpson.pdf},
  1997.

\bibitem[Tod09]{MR2477894}
Yukinobu Toda, \emph{Deformations and {F}ourier-{M}ukai transforms}, J.
  Differential Geom. \textbf{81} (2009), no.~1, 197--224. \MR{2477894
  (2010a:14020)}

\bibitem[TV05]{MR2137288}
Bertrand To{\"e}n and Gabriele Vezzosi, \emph{Homotopical algebraic geometry.
  {I}. {T}opos theory}, Adv. Math. \textbf{193} (2005), no.~2, 257--372.
  \MR{2137288 (2007b:14038)}

\bibitem[TV08]{MR2394633}
\bysame, \emph{Homotopical algebraic geometry. {II}. {G}eometric stacks and
  applications}, Mem. Amer. Math. Soc. \textbf{193} (2008), no.~902, x+224.
  \MR{2394633 (2009h:14004)}

\bibitem[Whi46]{MR0017537}
J.~H.~C. Whitehead, \emph{Note on a previous paper entitled ``{O}n adding
  relations to homotopy groups.''}, Ann. of Math. (2) \textbf{47} (1946),
  806--810. \MR{0017537 (8,167a)}

\bibitem[Yos06]{MR2306170}
K{\=o}ta Yoshioka, \emph{Moduli spaces of twisted sheaves on a projective
  variety}, Moduli spaces and arithmetic geometry, Adv. Stud. Pure Math.,
  vol.~45, Math. Soc. Japan, Tokyo, 2006, pp.~1--30. \MR{2306170 (2008f:14024)}

\end{thebibliography}

\end{document}